\documentclass{amsart}[12pt]
\usepackage[dvipsone]{graphicx}
\usepackage{epsfig,amsmath,amssymb,enumerate}
\usepackage[all]{xy}

\newcommand{\lga}{\longrightarrow}
\newcommand{\lgaf}{\longleftarrow}

\newcommand{\limdi}{\displaystyle \lim_{\longrightarrow}}
\newcommand {\Z}{\mathbb Z}
\newcommand {\R}{\mathbb R}
\newcommand {\N}{\mathbb N}

\newcommand {\F}{\mathbb F}

\newcommand {\sub}{\subset}

\newcommand{\liminv}{\displaystyle\lim_{\longleftarrow}}

\newtheorem{theorem}{Theorem}[section]
\newtheorem{lemma}[theorem]{Lemma}

\newtheorem{proposition}[theorem]{Proposition}
\newtheorem{corollary}[theorem]{Corollary}

\theoremstyle{definition}
\newtheorem{definition}[theorem]{Definition}

\theoremstyle{remark}
\newtheorem{remark}[theorem]{Remark}

\numberwithin{equation}{section}

\begin{document}

\title{A topological equivalence relation for finitely presented groups}


\author{M. C\'ardenas}
\address{Departamento de Geometr\'{\i}a y Topolog\'{\i}a, Fac. Matem\'aticas, Universidad de Sevilla,
C/. Tarfia s/n 41012-Sevilla, Spain} \email{mcard@us.es}
\author{F. F. Lasheras}
\address{Departamento de Geometr\'{\i}a y Topolog\'{\i}a, Fac. Matem\'aticas, Universidad de Sevilla,
C/. Tarfia s/n 41012-Sevilla, Spain} \email{lasheras@us.es}
\author{A. Quintero}
\address{Departamento de Geometr\'{\i}a y Topolog\'{\i}a, Fac. Matem\'aticas, Universidad de Sevilla,
C/. Tarfia s/n 41012-Sevilla, Spain} \email{quintero@us.es}
\author{R. Roy}
\address{College of Arts and Sciences, New York Institute of Technology, Old Westbury, NY 11568-8000, USA}
\email{rroy@nyit.edu}

\thanks{This work was partially supported by the project MTM 2015-65397}


\keywords{Proper homotopy, quasi-isometry, finitely presented group}
\date{December 1, 2019}


\begin{abstract}  In this paper, we consider an equivalence relation within
the class of finitely presented discrete groups attending to their asymptotic
topology rather than their asymptotic geometry. More precisely, we
say that two finitely presented groups $G$ and $H$ are ``proper
$2$-equivalent" if there exist (equivalently, for all) finite
$2$-dimensional CW-complexes $X$ and $Y$, with $\pi_1(X) \cong G$ and $\pi_1(Y) \cong H$, so that their universal covers $\widetilde{X}$ and
$\widetilde{Y}$ are proper $2$-equivalent.
It follows that this relation is coarser than the quasi-isometry
relation. We point out that finitely presented groups which are $1$-ended and semistable at
infinity are classified, up to proper $2$-equivalence, by their fundamental pro-group, and we study the behaviour
of this relation with respect to some of the main constructions in combinatorial group theory.
A (finer) similar equivalence relation may also be considered for groups of type $F_n, n \geq 3$, which
captures more of the large-scale topology of the group. Finally, we pay special attention to the class of those groups
$G$ which admit a finite $2$-dimensional CW-complex $X$ with $\pi_1(X) \cong G$ and whose universal cover
$\widetilde{X}$ has the proper homotopy type of a $3$-manifold. We show that if such a group $G$ is $1$-ended and
semistable at infinity then it is proper $2$-equivalent to either ${\Z} \times {\Z} \times {\Z}$, ${\Z} \times {\Z}$ or
${\F}_2 \times {\Z}$ (here, ${\F}_2$ is the free group on two generators). As it turns out, this applies in particular to any group $G$ fitting as
the middle term of a short exact sequence of infinite finitely presented groups, thus classifying such group extensions up to proper $2$-equivalence.
\end{abstract}

\maketitle

\section{Introduction}

It is well-known that an algebraic classification of finitely
generated groups is impossible because of the undecidability of the
word problem \cite{N}. In \cite{Gr}, Gromov outlined a program to
understand and try to classify these groups geometrically via the
notion of quasi-isometry for finitely generated groups, regarded as
metric spaces. Since then, those properties of finitely generated
groups which are invariant under quasi-isometries have been of great
interest and widely studied. On the other hand, the study of
asymptotic invariants of topological nature for finitely generated
groups has also led to an interesting research area (see \cite{Geo}
for a good source on this subject). See also \cite{O} for a nice survey
on
some of such invariants coming from $3$-manifold theory.\\
\indent In this paper, we consider an equivalence relation within
the class of finitely presented groups attending to their asymptotic topology
rather than their asymptotic geometry, and point out that this equivalence relation
is coarser than the quasi-isometry relation, i.e., quasi-isometric finitely presented
groups will also be related in this wider  and ``geometry forgetful" sense.
For this, we need some preliminaries.\\
\indent We will generally be working within the category of locally finite
CW-complexes and proper maps. We recall that a {\it proper} map is a
map with the property that the inverse image of every compact subset
is compact. Thus, two locally finite CW-complexes are said to be
proper homotopy equivalent if they are homotopy equivalent and all
homotopies involved are proper.\\
\indent Given a non-compact (strongly) locally finite CW-complex $Y$, a {\it
proper ray} in $Y$ is a proper map $\omega : [0, \infty) \lga Y$ (see \cite{Geo}). We
say that two proper rays $\omega, \omega'$ {\it define the same end}
if their restriction to the natural numbers $\omega|_{\N},
\omega'|_{\N}$ are properly homotopic. This equivalence relation
gives rise to the notion of {\it end determined by $\omega$} as the
corresponding equivalence class, as well as the space of ends ${\mathcal E}(Y)$ of $Y$ as a compact totally
disconnected metrizable space (see \cite{BQ,Geo}). The CW-complex $Y$ is {\it
semistable} at the end determined by $\omega$ if
any other proper ray defining the same end is in fact properly
homotopic to $\omega$; equi\-va\-lently,
if the fundamental pro-group $pro-\pi_1(Y, \omega)$ is pro-isomorphic to
a tower of groups with surjective bonding homomorphisms. Recall that $pro-\pi_1(Y, \omega)$ is represented by the inverse
sequence (tower) of groups
\[
\pi_1(Y, \omega(0))
\stackrel{\phi_1}{\lgaf} \pi_1(Y - C_1, \omega(t_1))
\stackrel{\phi_2}{\lgaf} \pi_1(Y - C_2, \omega(t_2))
\lgaf \cdots
\]
where $C_1
\sub C_2 \sub \cdots \sub Y$ is a filtration of $Y$ by compact subspaces, $\omega([t_i,\infty)) \sub Y - C_i$
and the bonding homomorphisms $\phi_i$ are
induced by the inclusions and basepoint-change isomorphisms (which are defined using subpaths of $\omega$). We refer to \cite{Geo,MarSe}
for more details and the basics of the pro-category of towers of groups.\\
\indent Given a CW-complex $X$, with $\pi_1(X) \cong G$, we will
denote by $\widetilde{X}$ the universal cover of $X$, constructed as
prescribed in (\cite{Geo}, $\S 3.2$), so that $G$ is acting freely
on the CW-complex $\widetilde{X}$ via a cell-permuting left action
with $G \backslash \widetilde{X} = X$. The number of ends of an
(infinite) finitely generated group $G$ represents the number of ends
of the (strongly) locally finite CW-complex $\widetilde{X}^1$, for some (equivalently any)
CW-complex $X$ with $\pi_1(X) \cong G$ and with finite $1$-skeleton,
which is either $1, 2$ or $\infty$ (finite groups have $0$ ends
\cite{Geo,SWa}). If $G$ is finitely presented, then $G$ is {\it
semistable at each end} (resp. {\it at infinity}, if $G$ is
$1$-ended) if the (strongly) locally finite CW-complex $\widetilde{X}^2$ is so, for some (equivalently any)
CW-complex $X$ with $\pi_1(X) \cong G$ and with finite $2$-skeleton.
In fact, we will refer to the fundamental pro-group of
$\widetilde{X}^2$ (at each end) as the fundamental pro-group of $G$
(at each end). Observe that any finite-dimensional locally finite CW-complex is strongly locally finite, see \cite[Prop. 10.1.12]{Geo}. It is worth mentioning that the number of ends and semistabilitiy are quasi-isometry invariants for finitely presented groups \cite{Br}.\\

\indent  In this context, we have:
\begin{definition} \label{p2e} Two finitely presented groups $G$ and $H$ are {\it proper $2$-equivalent}
if there exist (equivalently, for all) finite $2$-dimensional CW-complexes
$X$and $Y$, with $\pi_1(X) \cong G$ and $\pi_1(Y) \cong H$, so that their universal covers
$\widetilde{X}$ and $\widetilde{Y}$ are proper $2$-equivalent.
\end{definition}
See $\S 2$ for the definition of a proper $n$-equivalence between locally finite CW-complexes. Observe that any two finite groups are proper $2$-equivalent as any two simply connected finite CW-complexes are (trivially) proper $2$-equivalent, since they are homotopy equivalent to a finite bouquet of $2$-spheres (see \cite{Wall}). Also, the required proper $2$-equivalence can be replaced by a proper homotopy equivalence after wedging with $2$-spheres. We will give the details in $\S 2$ and show that this determines an equivalence relation for finitely presented groups. One can easily check that the number of ends
and semistability at each end are invariants under this proper $2$-equivalence relation, as well as any other proper
homotopy invariant of the group which depends only on the $2$-skeleton of the universal cover
of some finite CW-complex with the given group as fundamental
group, like simple connectivity at infinity (see \cite{Geo}). Likewise, the second cohomology $H^2(G; {\Z}G)$ of a
finitely presented group $G$ is an invariant under proper $2$-equivalences, as it is isomorphic to the first cohomology
of the end $H^1_e(\widetilde{X};{\Z})$, for any finite $2$-dimensional CW-complex $X$ with $\pi_1(X) \cong G$ (see
\cite[$\S 12.2, 13.2$]{Geo}). Also, observe that if $G$ is semistable at infinity then the second cohomology with
compact supports $H^2_c(\widetilde{X};{\Z}) \cong H^2(G;{\Z}G) \oplus (free \; abelian)$ is free abelian (and hence so
is $H^2(G;{\Z}G)$), see \cite{GM}. It is worth mentioning that the cohomology group $H^2(G;{\Z}G)$ was shown in
\cite{Ger} to be a quasi-isometry invariant of the group (see also \cite[Thm. 18.12.11]{Geo}).\\
\indent This proper $2$-equivalence
relation is motivated by the study and recent results \cite{CLQR_AMS} on {\it
properly $3$-realizable} groups, i.e., those finitely presented
groups $G$ for which there is some finite $2$-dimensional
CW-complex $X$ with $\pi_1(X) \cong G$ and whose universal cover $\widetilde{X}$ has the proper homotopy type of a
$3$-manifold. We will dedicate special attention to this class of groups in $\S \ref{App2}$ taking a closer look to it
regarding the above equivalence relation, and show that there are only three proper $2$-equivalence classes containing
$1$-ended and semistable at infinity properly $3$-realizable groups.\\
\indent It follows from \cite[Thm. 18.2.11]{Geo} that if $G$ and $H$ are in fact quasi-isometric groups then they are
also proper $2$-equivalent, even if the required proper $2$-equivalen\-ce in Definition \ref{p2e} is replaced by a proper homotopy equivalence (see \cite[Cor. 1.2]{CLQR_AMS}). On the other hand, it is easy to find finitely
presented groups which are trivially proper $2$-equivalent but not quasi-isometric. For this, we may just consider the
fundamental groups of the torus and any other closed orientable surface of genus at least $2$, which both have ${\R}^2$
as universal cover. A non-trivial example will be given in $\S \ref{counterexample}$.\\
\indent We also show that two finite graph of
groups decompositions with finite edge groups and finitely presented vertex groups with at most one end yield proper
$2$-equivalent groups if they have the same set of proper $2$-equivalence classes of vertex groups (see Theorem
\ref{graph}). On the other hand, unlike the situation under the quasi-isometry relation (compare with \cite[Thm.
0.4]{PaWhy}), the proper $2$-equivalence class of a finitely presented group does not determine in general the set of
proper $2$-equivalence classes of vertex groups in such a decomposition of the group. Again, an example will be given in
$\S \ref{counterexample}$.

\section{Some basic properties. The finite ended case} \label{basics}

We start by recalling the notions of proper $n$-equivalence and proper $n$-type for CW-complexes, already existing in the literature.
\begin{definition} (\cite[$\S$ 11.1]{Geo}) \label{AQ1}. A proper cellular map $f : X \lga Y$
between finite-dimensional locally finite CW-complexes is a proper
$n$-equivalence if there is another proper cellular map $g : Y \lga
X$ such that the restrictions $g \circ f | X^{n-1}$ and $f \circ g |
Y^{n-1}$ are proper homotopic to the inclusion maps  $X^{n-1}
\subseteq X$ and $Y^{n-1} \subseteq Y$.
\end{definition}
It is worth mentioning that an $n$-equivalence as in Definition \ref{AQ1} is a stronger version
of the notion of a proper $(n-1)$-type which extends the classical
$(n-1)$-type in ordinary homotopy theory introduced by J.H.C. Whitehead \cite{Wh}.
More precisely,
\begin{definition} (\cite[$\S$ 6]{CLMQ}) \label{AQ2} Let $X$ and $Y$ be two finite-dimensional locally finite CW-complexes.
We say that they have the same proper $n$-type if there exist proper
maps $f : X^{n+1} \lga Y^{n+1}$ and $g : Y^{n+1} \lga X^{n+1}$ such
that the restrictions $g \circ f | X^n$ and $f \circ g | Y^n$ are
proper homotopic to the inclusion maps $X^n \subseteq X^{n+1}$ and
$Y^n \subseteq Y^{n+1}$.
\end{definition}
\begin{remark} \label{dim} By the proper cellular approximation theorem (see \cite{Geo}), if two finite-dimensional
locally finite CW-complexes are proper $n$-equivalent then so are their $n$-skeleta, and two $n$-dimensional locally
finite CW-complexes are proper $n$-equivalent if and only if they have the same proper $(n-1)$-type. On the other hand,
if two finite-dimensional locally finite CW-complexes are proper homotopy equivalent then they are proper
$n$-equivalent, for all $n$.
\end{remark}
\begin{remark} Recently, Geoghegan et al. \cite{GeoGuilMih} have relaxed Definition \ref{AQ1}, making Definitions \ref{AQ1} and \ref{AQ2} more compatible.
\end{remark}
The following result shows that Definition \ref{p2e} does not depend on the choice of the corresponding CW-complexes. It also provides an alternative equivalent definition by replacing the required proper $2$-equivalence with a proper homotopy equivalence after wedging with $2$-spheres.
\begin{theorem} \label{alternative} Let $G$ and $H$ be two infinite finitely presented groups, and let $X$ and $Y$ be any two finite $2$-dimensional CW-complexes with $\pi_1(X) \cong G$ and $\pi_1(Y) \cong H$. Then, if $\widetilde{X}$ and $\widetilde{Y}$ denote the corresponding universal covers, the following statements are equivalent:
\begin{enumerate}
\item[(a)] The groups $G$ and $H$ are proper $2$-equivalent.
\item[(b)] $\widetilde{X}$ and $\widetilde{Y}$ are proper $2$-equivalent (i.e., they have the same proper $1$-type).
\item[(c)] There exist $2$-spherical objects $S^2_\alpha$ and $S^2_\beta$ so that $\widetilde{X} \vee S^2_\alpha$ and $\widetilde{Y} \vee S^2_\beta$ are proper homotopy equivalent.
\item[(d)] The universal covers $\widetilde{X \vee S^2}$ and $\widetilde{Y \vee S^2}$ are proper homotopy equivalent.
\end{enumerate}
\end{theorem}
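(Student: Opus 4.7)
The plan is to establish the cycle of implications (b) $\Rightarrow$ (a) $\Rightarrow$ (d) $\Rightarrow$ (c) $\Rightarrow$ (b), with the substantive work concentrated in (a) $\Rightarrow$ (d); independence from the choice of $X$ and $Y$ in Definition \ref{p2e} will come as a byproduct of (a) $\Leftrightarrow$ (b).

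Three of the four implications are routine. (b) $\Rightarrow$ (a) is immediate from the existence clause of Definition \ref{p2e}, since the given $X$ and $Y$ themselves serve as witnesses. For (d) $\Rightarrow$ (c), observe that the universal cover $\widetilde{X \vee S^2}$ decomposes naturally as $\widetilde{X}$ with a copy of $S^2$ wedged at each lift of the wedge basepoint, that is, as $\widetilde{X} \vee S^2_\alpha$ where $S^2_\alpha$ is the 2-spherical object indexed by the orbit $\pi^{-1}(\ast)$; similarly for $Y$. For (c) $\Rightarrow$ (b), I would verify that the inclusion $i : \widetilde{X} \hookrightarrow \widetilde{X} \vee S^2_\alpha$ is itself a proper $2$-equivalence, with the collapse $c$ as inverse: $c \comp i$ is the identity, while $i \comp c$ agrees strictly with the identity on the common $1$-skeleton (wedging with 2-spherical objects introduces no new $1$-cells beyond basepoints), so the required restrictions to $1$-skeleta are trivially properly homotopic to the inclusions. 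Composing these collapses with the proper homotopy equivalence furnished by (c) and invoking Remark \ref{dim} yields a proper $2$-equivalence between $\widetilde{X}$ and $\widetilde{Y}$.

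The heart of the argument is (a) $\Rightarrow$ (d). The key downstairs input is the classical Tietze--Whitehead stabilization (see \cite{Wall}): any two finite $2$-dimensional CW-complexes $X_1, X_2$ with isomorphic fundamental groups become homotopy equivalent after wedging with finitely many $2$-spheres, because the Tietze transformations relating two finite presentations can be realized by elementary simple-homotopy moves together with additions of trivial $2$-cells (whose net effect on the $2$-type is wedging with copies of $S^2$). A homotopy equivalence $X_1 \vee \bigvee^m S^2 \simeq X_2 \vee \bigvee^n S^2$ lifts to a $\pi_1$-equivariant homotopy equivalence of universal covers, which is automatically proper because the deck action is cocompact; hence $\widetilde{X_1 \vee \bigvee^m S^2}$ and $\widetilde{X_2 \vee \bigvee^n S^2}$ are proper homotopy equivalent.

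Given (a), fix witnesses $X_0, Y_0$ whose universal covers are proper $2$-equivalent. Applying the downstairs stabilization to the pairs $(X, X_0)$ and $(Y, Y_0)$ separately reduces (d) to its proper analog for $X_0, Y_0$: if $\widetilde{X_0}$ and $\widetilde{Y_0}$ are proper $2$-equivalent (possibly with $G \not\cong H$), then they become proper homotopy equivalent after wedging with suitable 2-spherical objects. This proper stabilization is the main technical obstacle. I would attack it by taking the mapping cylinder of a proper $2$-equivalence $f : \widetilde{X_0} \to \widetilde{Y_0}$: the $2$-dimensionality of the complexes concentrates the obstruction to upgrading $f$ to a full proper homotopy equivalence in a ``proper $\pi_2$'' discrepancy distributed over the ends, which can be killed by wedging 2-spherical objects with the appropriate end-distribution. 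Once this proper stabilization is in hand, rebundling the resulting wedge summands (and padding either side with additional $S^2_G$ or $S^2_H$ summands to match indices) converts the equivalence into the form $\widetilde{X \vee S^2}$ proper homotopy equivalent to $\widetilde{Y \vee S^2}$, completing the cycle.
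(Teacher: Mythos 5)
Your argument is correct in outline, but it is organized differently from the paper's proof, so a comparison is in order. The paper proves (a) $\Rightarrow$ (b) first, handling the change of complexes for a fixed group by realizing $X$ and a witness $W$ as $2$-skeleta of $K(G,1)$-complexes and invoking \cite[Thm. 18.2.11]{Geo}, and then gets (b) $\Rightarrow$ (d) by citing the proof of \cite[Cor. 1.2]{CLQR_AMS}; you instead handle the change of complexes downstairs via Tietze--Whitehead stabilization of finite $2$-complexes followed by equivariant (hence, by cocompactness, proper) lifting, which is an equally valid and more hands-on route to the same reduction. Your (c) $\Rightarrow$ (b), via the observation that the collapse $\widetilde{X} \vee S^2_\alpha \lga \widetilde{X}$ and the inclusion form a proper $2$-equivalence because the wedge adds no $1$-cells, is genuinely more elementary than the paper's (c) $\Rightarrow$ (d), which goes through the classification of spherical objects \cite[Prop. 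II.4.5]{BQ} and a careful matching of end-distributions under the restriction of the equivalence to maximal trees. Note, however, that you do not actually avoid that classification: the ``rebundling and padding'' at the end of your (a) $\Rightarrow$ (d) needs precisely the absorption property $S^2_\alpha \vee S^2_T \simeq S^2_T$ together with the transport of a spherical object along the homeomorphism of end spaces induced by a proper homotopy equivalence, so you should spell that out as the paper does. The one place where your proposal is a sketch rather than a proof is the ``proper stabilization'' step --- that a proper $2$-equivalence $\widetilde{X_0} \lga \widetilde{Y_0}$ of simply connected $2$-complexes upgrades to a proper homotopy equivalence after wedging suitable $2$-spherical objects. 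Your mapping-cylinder/proper-$\pi_2$ heuristic is the right idea, but this is a substantial theorem (it is \cite[Cor. 1.2]{CLQR_AMS}, resting on \cite{Zobel}), not a routine obstruction computation; since the paper also outsources exactly this step to that reference, your proposal matches the paper's level of rigor here provided you cite the result rather than claim to reprove it.
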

\noindent The wedge in (d) is the usual one, while the wedges in (c) are taken along maximal trees $T \sub \widetilde{X}$ and $T' \sub \widetilde{Y}$, and by an spherical object we mean the space
obtained from the corresponding maximal tree by attaching finitely many $2$-spheres at each vertex.
\begin{proof}  We may always assume that the $0$-skeleta of the CW-complexes $X$ and $Y$ reduce to a single vertex. If $G$ and $H$ are proper $2$-equivalent then there exist finite $2$-dimensional CW-complexes $W$ and $Z$, with $\pi_1(W) \cong G$ and $\pi_1(Z) \cong H$, so that the universal
covers $\widetilde{W}$ and $\widetilde{Z}$ are proper $2$-equivalent. We now consider $K(G,1)$-complexes $X'$ and $W'$ with $(X')^2 = X$ and $(W')^2 = W$, and $K(H,1)$-complexes
$Y'$ and $Z'$ with $(Y')^2 = Y$ and $(Z')^2 = Z$. By the proper cellular approximation theorem, it is not hard to check
that $\widetilde{(X')^2} = \widetilde{X}$ and $\widetilde{(W')^2} = \widetilde{W}$ are proper $2$-equivalent as $X'$ and
$W'$ are homotopy equivalent. It also follows from \cite[Thm. 18.2.11]{Geo}, since $\pi_1(X') \cong \pi_1(W') \cong G$.
Similarly, $\widetilde{Y} = \widetilde{(Y')^2}$ and $\widetilde{Z} = \widetilde{(Z')^2}$ are proper $2$-equivalent. Thus, (a) $\Rightarrow$ (b) follows by transitivity. On the other hand, (b) $\Rightarrow$ (d) follows from the proof of \cite[Cor. 1.2]{CLQR_AMS}, and (d) $\Rightarrow$ (c) follows from the fact that $\widetilde{X \vee S^2} = \widetilde{X} \vee S^2_T$ where $S^2_T$ is the ``universal" spherical object defined by attaching exactly one $2$-sphere at each vertex of $T \sub \widetilde{X}$; similarly, $\widetilde{Y \vee S^2} = \widetilde{Y} \vee S^2_{T'}$. Conversely, for (c) $\Rightarrow$ (d) we observe that, by the classification of spherical objects in \cite[Prop. II.4.5]{BQ}, we have a proper homotopy equivalence
\[
\widetilde{X} \vee S^2_T \simeq \widetilde{X} \vee (S^2_\alpha \vee S^2_T) = (\widetilde{X} \vee S^2_\alpha) \vee S^2_T
\]
If $f : \widetilde{X} \vee S^2_\alpha \lga \widetilde{Y} \vee S^2_\beta$ is now a (cellular) proper homotopy equivalence, then the restriction $\omega = f|_T$ yields a proper homotopy equivalence $(\widetilde{X} \vee S^2_\alpha) \vee S^2_T \simeq (\widetilde{Y} \vee S^2_\beta) \vee S^2_\omega$, where $S^2_\omega$ is the spherical object obtained by attaching $\# \omega^{-1}(v)$ $2$-spheres at each vertex $v \in T'$. As $\omega$ induces a homeomorphism between the spaces of ends ${\mathcal E}(T) \simeq {\mathcal E}(\widetilde{X})$ and ${\mathcal E}(T') \simeq {\mathcal E}(\widetilde{Y})$, the classification of spherical objects in \cite[Prop. II.4.5]{BQ} yields a proper homotopy equivalence $S^2_\omega \simeq S^2_{T'}$ and hence
\[
(\widetilde{Y} \vee S^2_\beta) \vee S^2_\omega \simeq (\widetilde{Y} \vee S^2_\beta) \vee S^2_{T'} \simeq \widetilde{Y} \vee (S^2_\beta \vee S^2_{T'}) \simeq \widetilde{Y} \vee S^2_{T'}
\]
thus obtaining a proper homotopy equivalence $\widetilde{X \vee S^2} = \widetilde{X} \vee S^2_T \simeq \widetilde{Y} \vee S^2_{T'} = \widetilde{Y \vee S^2}$. Finally, (d) $\Rightarrow$ (a) is obvious as $\pi_1(X \vee S^2) \cong G$ and $\pi_1(Y \vee S^2) \cong H$.
\end{proof}
\begin{remark} Theorem \ref{alternative}(d) shows that the alternative definition via proper homotopy equivalences does not depend on the choice of the corresponding CW-complexes after taking wedge with a single $2$-sphere.
\end{remark}
\begin{corollary} The relation of being proper $2$-equivalent is an equivalence relation for finitely presented groups.
\end{corollary}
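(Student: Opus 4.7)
My plan is to check the three axioms of an equivalence relation in turn, observing that reflexivity and symmetry fall out immediately from Definition~\ref{p2e} together with Definition~\ref{AQ1}: for reflexivity, take $Y = X$ and let both $f$ and $g$ of Definition~\ref{AQ1} be the identity on $\widetilde{X}$; for symmetry, swap the roles of $f$ and $g$, and of $X$ and $Y$, in Definition~\ref{AQ1}. All the real content lies in transitivity.

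For transitivity, suppose $G$ is proper $2$-equivalent to $H$ and $H$ is proper $2$-equivalent to $K$. The only obstruction I foresee is an artefact of the existential quantifier in Definition~\ref{p2e}: the two relations may a priori be witnessed by different finite $2$-complexes $Y_1, Y_2$ presenting $H$, and one cannot directly compose proper $2$-equivalences between different universal covers. This is precisely the ambiguity dissolved by the equivalence (a) $\Leftrightarrow$ (b) of Theorem~\ref{alternative}, which shows that for infinite groups the proper $2$-equivalence of the universal covers is independent of the chosen finite $2$-complex. So I would fix any finite $2$-complexes $X, Y, Z$ realising $G, H, K$, conclude via Theorem~\ref{alternative}(b) that $\widetilde{X}$ and $\widetilde{Y}$, respectively $\widetilde{Y}$ and $\widetilde{Z}$, are proper $2$-equivalent as CW-complexes, and then compose.

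The composition step is a routine verification: after invoking proper cellular approximation the maps preserve $1$-skeleta, and the defining proper homotopies of Definition~\ref{AQ1} combine in the obvious way; I would not belabour it. A cleaner packaging that I would prefer to present uses characterisation (d) of Theorem~\ref{alternative}: two infinite finitely presented groups $G$ and $H$ are proper $2$-equivalent if and only if $\widetilde{X \vee S^2}$ and $\widetilde{Y \vee S^2}$ are proper homotopy equivalent, and proper homotopy equivalence is manifestly an equivalence relation on locally finite CW-complexes. This disposes of both the composition verification and the CW-complex alignment issue in one stroke. The case in which some of the three groups is finite is handled separately: since the number of ends is a proper $2$-equivalence invariant, a finite group can only be proper $2$-equivalent to other finite groups, and any two finite groups are proper $2$-equivalent by the observation following Definition~\ref{p2e}; transitivity within that class is therefore automatic. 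The main obstacle is thus entirely concentrated in the preliminary Theorem~\ref{alternative}, which has already been established.
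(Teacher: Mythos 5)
Your proposal is correct and follows essentially the same route as the paper: the authors likewise dispose of transitivity either by citing the transitivity of proper $2$-equivalences of CW-complexes directly or, as you prefer, by invoking Theorem~\ref{alternative}(d) to reduce everything to the transitivity of proper homotopy equivalence of the wedged universal covers $\widetilde{X \vee S^2}$, $\widetilde{Y \vee S^2}$, $\widetilde{Z \vee S^2}$. Your explicit treatment of the finite-group case (via invariance of the number of ends) is a small addition the paper leaves implicit, but it is consistent with the remarks following Definition~\ref{p2e}.
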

\begin{proof} It readily follows from the transitivity of proper $n$-equivalences for CW-complexes. Alternatively, in view of Theorem \ref{alternative}, one can also show transitivity as follows. Let $G, H$ and $K$ be infinite finitely presented groups so that $G$ is proper
$2$-equivalent to $H$ and $H$ is proper $2$-equivalent to $K$, and let
$X, Y$ and $Z$ be finite $2$-dimensional CW-complexes with $\pi_1(X) \cong G, \pi_1(Y) \cong H$ and $\pi_1(Z) \cong K$.
By Theorem \ref{alternative}, we have that $\widetilde{X \vee S^2}$ is proper homotopy equivalent to $\widetilde{Y \vee
S^2}$ which in turn is proper homotpy equivalent to $\widetilde{Z \vee S^2}$. Thus, $G$ and $K$ are proper
$2$-equivalent, as $\pi_1(X \vee S^2) \cong G$ and $\pi_1(Z \vee S^2) \cong K$.
\end{proof}
\begin{remark} \label{quasi-isometric}
In particular, if $G$ and $H$ are two quasi-isometric finitely presented groups and $X$ and $Y$ are finite
$2$-dimensional CW-complexes with $\pi_1(X) \cong G$ and $\pi_1(Y) \cong H$, then it follows from \cite[Thm.
18.2.11]{Geo} that their universal covers $\widetilde{X}$ and $\widetilde{Y}$ are proper $2$-equivalent (in fact, $\widetilde{X \vee S^2}$ and $\widetilde{Y \vee S^2}$ are proper homotopy equivalent, by Theorem \ref{alternative}) and hence $G$ and $H$ are proper $2$-equivalent as finitely presented groups. By using the \v{S}varc-Milnor Lemma and its well-known
corollaries (see \cite[Thm. 18.2.15]{Geo} or \cite{DructuKapovich}) we have as an immediate consequence that if $H \leq G$ has
$[G:H] < \infty$ and $N \leq G$ is a finite normal subgroup then $G, H$ and $G/N$ are proper $2$-equivalent to each
other. In particular, all $2$-ended groups are proper $2$-equivalent to the group of integers.
\end{remark}

Notice that we may define a function $\varphi$ between the set of proper $2$-equivalence classes of finitely presented
groups and the set of proper $1$-types of $2$-dimensional (locally finite) simply connected CW-complexes, by assigning
to each equivalence class $[G]$ the proper $1$-type of $\widetilde{X}$, where $X$ is any finite $2$-dimensional
CW-complex with $\pi_1(X) \cong G$. Of course, all spaces with $k$ ends where $3 \leq k < \infty$ are not in the image of $\varphi$. Even when considering only simply connected CW-complexes with an allowed number of ends, this function $\varphi$ is not expected to be surjective, as suggested by the
following example. First, consider the mapping telescope of the obvious map $S^1 \lga S^1$ of degree two and cut it in
half through a transverse circle $C$. Second, consider the right hand side of it and glue a disk to the
circle $C$ along the boundary, via the identity map (see the figure below). The second cohomology with compact supports
of the resulting $2$-dimensional simply connected CW-complex is not free abelian (as contains the dyadics as a subgroup)
and its fundamental pro-group has the form $1 \leftarrow {Z} \stackrel{\times 2}{\lgaf} {\Z} \stackrel{\times 2}{\lgaf}
{\Z} \stackrel{\times 2}{\lgaf} \cdots$

\begin{figure}[h!]
\vspace{-3mm}
\centerline{\psfig{figure=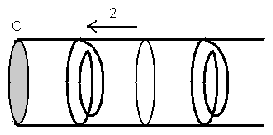,height=2.5cm,width=9cm}}
\end{figure}

If $\varphi$ were surjective, then there would exist a $1$-ended finitely presented group $G$ which is not semistable at
infinity (as $H^2(G;{\Z}G)$ is not free abelian, see $\S 1$) and would also serve as a counterexample to the Burnside
problem for finitely presented groups (as no free ${\Z}$-action is possible with such a fundamental pro-group, see
\cite{GeoG}). This leads us to the following open question
(some related questions have already been posed in \cite{GeoG}):\\

\noindent {\bf Open question:} What is the image of $\varphi$?\\

\indent Observe that given two (infinite) finitely presented proper $2$-equivalent groups $G$ and $H$ which are
semistable at each end, and finite $2$-dimensional CW-complexes $X$ and $Y$ with $\pi_1(X) \cong G$ and $\pi_1(Y) \cong
H$, for any end of $\widetilde{X}$ there is an end of $\widetilde{Y}$ so that the corresponding fundamental pro-groups
are pro-isomorphic, and vice versa (see \cite[Thm. 16.2.3]{Geo}). In fact, in the $1$-ended case we have the following
characterization.
\begin{proposition} \label{characterisation} Let $G$ and $H$ be two finitely presented groups which are $1$-ended and
semistable at infinity. Then, $G$ and $H$ are proper $2$-equivalent if and only if they have pro-isomorphic fundamental
pro-groups.
\end{proposition}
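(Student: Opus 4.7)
The plan is to handle the two directions separately, treating sufficiency as the substantive one. For necessity, let $X,Y$ be finite $2$-dimensional CW-complexes realizing $G,H$, and let $f\colon \widetilde X \lga \widetilde Y$, $g\colon \widetilde Y \lga \widetilde X$ be proper cellular maps witnessing the proper $2$-equivalence. Choosing cofinal exhaustions $C_1 \sub C_2 \sub \cdots$ of $\widetilde X$ and $D_1 \sub D_2 \sub \cdots$ of $\widetilde Y$ by finite subcomplexes with $f(C_n) \sub D_{\sigma(n)}$ and $g(D_n)\sub C_{\tau(n)}$, we obtain induced morphisms of towers $f_*$ and $g_*$ between $pro$-$\pi_1(\widetilde X,\omega)$ and $pro$-$\pi_1(\widetilde Y,f\omega)$. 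Any class in $\pi_1(\widetilde X - C_n,\omega(t_n))$ is represented by a loop in $\widetilde X^1 - C_n$ (cellular approximation, after replacing $C_n$ by a subcomplex), so the proper homotopy $gf|_{\widetilde X^1}\simeq \iota_{\widetilde X^1}$ — restricted to the complement of a sufficiently large $C_m$ — shows that $g_*f_* = \mathrm{id}$ on the pro-group; symmetrically $f_*g_* = \mathrm{id}$, so $f_*$ is a pro-isomorphism.

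For sufficiency, assume $\Phi\colon pro\text{-}\pi_1(\widetilde X,\omega_X)\lga pro\text{-}\pi_1(\widetilde Y,\omega_Y)$ is a pro-isomorphism. Since $G$ and $H$ are semistable and $1$-ended, both towers may be replaced by equivalent towers with surjective bonding homomorphisms whose terms are the $\pi_1$'s of the (connected) complements of connected subcomplexes $C_n\sub \widetilde X$, $D_n\sub \widetilde Y$. By the Mardesic--Segal reindexing (see \cite{MarSe}), after passing to cofinal subsequences we may represent $\Phi$ and $\Phi^{-1}$ by level morphisms $\Phi_n\colon \pi_1(\widetilde X - C_n)\lga \pi_1(\widetilde Y - D_n)$ and $\Psi_n\colon \pi_1(\widetilde Y - D_n)\lga \pi_1(\widetilde X - C_{n-c})$ compatible with bondings.

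The core construction realizes $\Phi$ by a proper cellular map $f\colon \widetilde X\lga \widetilde Y$. Build $f$ on the successive annular regions $\overline{C_{n+1}-C_n}$, inductively demanding that $f(C_n)\sub D_{n-c}$ for some constant shift $c$, and that for every $1$-cycle in $\widetilde X - C_n$ the induced class in $\pi_1(\widetilde Y - D_{n-c})$ coincides with $\Phi_n$ of its class. Since $\widetilde Y$ is simply connected, there is no obstruction to mapping $1$-cells anywhere we please, and the boundary of any $2$-cell of $\widetilde X$ bounds in $\widetilde Y$; the filtration compatibility is built in, so $f$ is proper. Symmetrically produce $g\colon \widetilde Y\lga \widetilde X$ realizing $\Phi^{-1}$. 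By construction $gf$ and $fg$ induce the identity on the respective fundamental pro-groups.

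To finish, we must show $gf|_{\widetilde X^1}$ is properly homotopic to the inclusion (and similarly $fg|_{\widetilde Y^1}$). The two maps $gf|_{\widetilde X^1},\, \iota\colon \widetilde X^1\lga \widetilde X$ agree after passing to a tail ($f,g$ proper) and, since they induce the same element of $pro\text{-}\pi_1$, on every $\pi_1(\widetilde X - C_n)$ the corresponding loop classes match; then an inductive cell-by-cell construction of a proper homotopy on $\widetilde X^1\times I$ into $\widetilde X$ proceeds without obstruction, because $\widetilde X$ is simply connected and $2$-dimensional. Equivalently, invoke Theorem \ref{alternative}: absorbing the potentially troublesome $\pi_2$-classes by wedging both sides with $2$-spheres, reduce to producing a proper homotopy equivalence $\widetilde{X\vee S^2}\simeq \widetilde{Y\vee S^2}$, which the framework of \cite[Ch.~II]{BQ} classifies precisely by the fundamental pro-group at infinity of a simply connected locally finite $2$-complex. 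The main obstacle is the realization step: turning the abstract pro-iso $\Phi$ into a genuine proper cellular map with the right filtration behaviour; once that is done, the proper Whitehead-style comparison of $gf$ with the inclusion on $\widetilde X^1$ is a routine, though careful, bookkeeping argument, and the stabilization trick of Theorem \ref{alternative} guarantees it can be made to work without any hidden $\pi_2$ obstruction.
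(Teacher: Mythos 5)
Your overall strategy coincides with the paper's: necessity is the invariance of the fundamental pro-group under proper $2$-equivalence, and sufficiency is the realization of a pro-isomorphism between the fundamental pro-groups of two $1$-ended, semistable, simply connected, locally finite $2$-complexes by a proper $2$-equivalence. The difference is that the paper disposes of both directions by citation --- \cite[Thm. 16.2.3]{Geo} for necessity and \cite[Prop. 6.2]{CLMQ} for sufficiency --- whereas you try to reprove the cited results. Your necessity argument is correct as sketched.

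For sufficiency, your outline is the standard one, but two points need repair before it is a proof. First, the condition you impose for properness is stated in the wrong direction: $f(C_n)\subseteq D_{n-c}$ is a containment of compacta and does not make $f$ proper; what you need is that complements go to complements, e.g.\ $f(\widetilde X - C_n)\subseteq \widetilde Y - D_{n-c}$ for all $n$. Second, and more substantively, when extending $f$ over a $2$-cell $e\subseteq \widetilde X - C_n$ it is not enough that $f(\partial e)$ bounds in $\widetilde Y$ (simple connectivity of $\widetilde Y$): to preserve properness the nullhomotopy must take place in $\widetilde Y - D_{n-c'}$ for a controlled $c'$. The correct reason this is possible is that $[\partial e]$ is already trivial in $\pi_1(\widetilde X - C_n)$ (the cell itself kills it there), so $\Phi_n[\partial e]=1$ in $\pi_1(\widetilde Y - D_{n-c})$; your appeal to $\pi_1(\widetilde Y)=1$ conflates ``nullhomotopic in $\widetilde Y$'' with ``nullhomotopic far out in $\widetilde Y$'', which is exactly the issue the realization lemma must control. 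Likewise, $gf|_{\widetilde X^1}$ and the inclusion do not ``agree after passing to a tail''; they are only filtration-compatibly close, so one must choose connecting paths from $v$ to $gf(v)$ lying in large complements (using $1$-endedness and connectivity of complements) and then kill the resulting square loops over $1$-cells in controlled complements using that $gf$ induces the identity pro-morphism. Finally, your ``equivalently, invoke Theorem \ref{alternative}'' remark only reformulates the target as a proper homotopy equivalence $\widetilde{X\vee S^2}\simeq\widetilde{Y\vee S^2}$; it does not supply the realization step, which remains the crux. None of these repairs is conceptually hard, but the cleanest route is the paper's: quote \cite[Prop. 6.2]{CLMQ}, which is precisely this realization statement.
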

Indeed, with the above notation, if $pro-\pi_1(\widetilde{X}) \cong pro-\pi_1(\widetilde{Y})$ (regardless the base ray,
by the semistability condition) then by \cite[Prop. 6.2]{CLMQ} we have that $\widetilde{X}$ and $\widetilde{Y}$ are
proper $2$-equivalent and hence $G$ and $H$ are proper $2$-equivalent. In particular,
all $1$-ended and simply connected at infinity (finitely presented) groups are proper $2$-equivalent to each other and
hence proper $2$-equivalent to ${\Z} \times {\Z} \times {\Z}$.

\indent We conclude this section by studying the behaviour of the proper $2$-equivalence relation with respect to direct
products and group extensions in general.
\begin{lemma} \label{complex-product} Let $X, Y, Z$ and $W$ be finite-dimensional locally finite CW-complexes, and
assume $X$ and $Y$ are proper $n$-equivalent to $Z$ and $W$ respectively.Then, $X \times Y$ is proper $n$-equivalent to
$Z \times W$.
\end{lemma}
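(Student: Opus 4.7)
The plan is to use the obvious candidate maps, namely products of the given proper cellular maps, and then reduce the proper homotopy condition for the product to the two given ones by exploiting the inclusion $(X\times Y)^{n-1}\subseteq X^{n-1}\times Y^{n-1}$.

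More precisely, by hypothesis there are proper cellular maps $f_1\colon X\lga Z$, $g_1\colon Z\lga X$, $f_2\colon Y\lga W$, $g_2\colon W\lga Y$ together with proper homotopies $H_1\colon X^{n-1}\times I\lga X$ from $g_1\comp f_1|X^{n-1}$ to the inclusion $X^{n-1}\sub X$, and analogous $H_1'$ on $Z^{n-1}$, $H_2$ on $Y^{n-1}$, $H_2'$ on $W^{n-1}$. I would take $f_1\times f_2\colon X\times Y\lga Z\times W$ and $g_1\times g_2\colon Z\times W\lga X\times Y$ as the candidate proper $n$-equivalence. These are cellular (with respect to the product CW-structure) and proper: the product of two proper maps between locally finite CW-complexes is proper, since the preimage of a compact $K_1\times K_2$ factors through $f_1^{-1}(K_1)\times f_2^{-1}(K_2)$ which is compact.

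The heart of the argument is to produce the required proper homotopy from $(g_1\times g_2)\comp(f_1\times f_2)|(X\times Y)^{n-1}=\bigl((g_1\comp f_1)\times(g_2\comp f_2)\bigr)|(X\times Y)^{n-1}$ to the inclusion $(X\times Y)^{n-1}\sub X\times Y$, and symmetrically on the $Z\times W$ side. Since the product cells $e^p\times e^q$ with $p+q\le n-1$ satisfy $p,q\le n-1$, the subcomplex $(X\times Y)^{n-1}$ is closed inside $X^{n-1}\times Y^{n-1}$, so it suffices to build a proper homotopy on $X^{n-1}\times Y^{n-1}$ and restrict. The natural candidate is
\[
H\colon X^{n-1}\times Y^{n-1}\times I \lga X\times Y,\qquad H(x,y,t)=(H_1(x,t),H_2(y,t)).
\]
On $(x,y,0)$ this gives $((g_1 f_1)(x),(g_2 f_2)(y))$ and on $(x,y,1)$ it gives $(x,y)$, so $H$ realises the desired homotopy; restricting to the closed subcomplex $(X\times Y)^{n-1}\times I$ yields the homotopy we need.

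The only point that requires a little care is showing $H$ is proper, and this is the step I expect to be the main (mild) obstacle. The clean way is to factor $H$ as
\[
X^{n-1}\times Y^{n-1}\times I \stackrel{\iota}{\lga} X^{n-1}\times I\times Y^{n-1}\times I \stackrel{H_1\times H_2}{\lga} X\times Y,
\]
where $\iota(x,y,t)=(x,t,y,t)$ is a closed embedding (the diagonal in the two $I$-factors) and $H_1\times H_2$ is proper by the product-of-proper-maps fact already used above. The composition of a proper map with a closed embedding is proper, so $H$ is proper, and its restriction to the closed subspace $(X\times Y)^{n-1}\times I$ remains proper. Running the symmetric argument with $H_1'$ and $H_2'$ on $(Z\times W)^{n-1}\subseteq Z^{n-1}\times W^{n-1}$ gives the other required proper homotopy, completing the verification that $f_1\times f_2$ and $g_1\times g_2$ exhibit $X\times Y$ and $Z\times W$ as proper $n$-equivalent.
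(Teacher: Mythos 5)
Your proposal is correct and takes essentially the same approach as the paper: the same product maps $f_1\times f_2$, $g_1\times g_2$, the same coordinatewise homotopy $H(x,y,t)=(H_1(x,t),H_2(y,t))$, and the same restriction along $(X\times Y)^{n-1}\subseteq X^{n-1}\times Y^{n-1}$. The only cosmetic difference is that the paper arranges the homotopies to be cellular via cellular approximation, while you instead spell out the properness of $H$ through the diagonal factoring; both are routine verifications.
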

\begin{proof} Let $f : X \lga Z$ and $g : Y \lga W$ be proper $n$-equivalences together with proper cellular maps $f' :
Z \lga X$ and $g' : W \lga Y$ and proper homotopies $F : f' \circ f | X^{n-1} \simeq i_{X^{n-1}}$, $F' : f \circ f' |
Z^{n-1} \simeq i_{Z^{n-1}}$, $G : g' \circ g | Y^{n-1} \simeq i_{Y^{n-1}}$ and $G' : g \circ g' | W^{n-1} \simeq
i_{W^{n-1}}$. By the proper cellular approximation theorem, we may also assume that all those proper homotopies are
cellular. Therefore, the product maps $f \times g$ and $f' \times g'$ as well as the homotopies $H : X^{n-1} \times
Y^{n-1} \times I \lga X \times Y$ and $H' : Z^{n-1} \times W^{n-1} \times I \lga Z \times W$, given by $H(x,y,t) =
(F(x,t), G(y,t))$ and $H'(z,w,t) = (F'(z,t), G'(w,t))$, are all cellular maps. Thus, as $(X \times Y)^k \subseteq X^k
\times Y^k$ for all $k \geq 0$, the restrictions $H | (X \times Y)^{n-1} \times I : (f' \times g') \circ (f \times g) |
(X \times Y)^{n-1} \simeq i_{(X \times Y)^{n-1}}$ and $H' | (Z \times W)^{n-1} \times I : (f \times g) \circ (f' \times
g') | (Z \times W)^{n-1} \simeq i_{(Z \times W)^{n-1}}$ are the desired proper homotopies, and this concludes the proof.
\end{proof}
\begin{proposition} \label{group-product} Let $G, G', H$ and $H'$ be finitely presented groups and assume that $G$ and
$H$ are proper $2$-equivalent to $G'$ and $H'$ respectively. Then, $G \times H$ is proper $2$-equivalent to $G' \times
H'$.
\end{proposition}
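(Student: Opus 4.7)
The plan is to reduce the statement to Lemma \ref{complex-product} applied to universal covers, and then descend to two-skeleta to meet the hypotheses of Definition \ref{p2e}. To begin, I would choose finite two-dimensional CW-complexes $X, X', Y, Y'$ with $\pi_1(X) \cong G$, $\pi_1(X') \cong G'$, $\pi_1(Y) \cong H$, $\pi_1(Y') \cong H'$. Since $G$ and $G'$ (respectively $H$ and $H'$) are proper $2$-equivalent, Theorem \ref{alternative} guarantees that $\widetilde{X}$ is proper $2$-equivalent to $\widetilde{X'}$ and $\widetilde{Y}$ is proper $2$-equivalent to $\widetilde{Y'}$. These universal covers are locally finite and finite-dimensional, so they meet the hypotheses of Lemma \ref{complex-product}.

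Next, I would apply Lemma \ref{complex-product} to conclude that the product $\widetilde{X} \times \widetilde{Y}$ is proper $2$-equivalent to $\widetilde{X'} \times \widetilde{Y'}$. The key observation now is that $X \times Y$ is a finite (four-dimensional) CW-complex with fundamental group $G \times H$ whose universal cover is precisely $\widetilde{X} \times \widetilde{Y}$, and similarly on the primed side. Passing to the two-skeleton, the inclusion $(X \times Y)^2 \hookrightarrow X \times Y$ induces an isomorphism on $\pi_1$, so $(X \times Y)^2$ is a finite two-dimensional CW-complex with $\pi_1 \cong G \times H$; moreover its universal cover is canonically identified with the two-skeleton $(\widetilde{X} \times \widetilde{Y})^2$ of the product universal cover.

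By Remark \ref{dim}, the proper $2$-equivalence between $\widetilde{X} \times \widetilde{Y}$ and $\widetilde{X'} \times \widetilde{Y'}$ restricts to a proper $2$-equivalence between their two-skeleta $(\widetilde{X} \times \widetilde{Y})^2$ and $(\widetilde{X'} \times \widetilde{Y'})^2$. Since these are the universal covers of the finite two-dimensional complexes $(X \times Y)^2$ and $(X' \times Y')^2$ with fundamental groups $G \times H$ and $G' \times H'$ respectively, Definition \ref{p2e} yields that $G \times H$ is proper $2$-equivalent to $G' \times H'$.

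The only subtlety I anticipate is the careful identification of the universal cover of $(X \times Y)^2$ with the two-skeleton $(\widetilde{X} \times \widetilde{Y})^2$ of the product of universal covers; this is routine but must be stated cleanly, since otherwise one is tempted to work directly with the four-dimensional complex $X \times Y$, which does not fit the definition of proper $2$-equivalence for groups. Everything else is a direct composition of Lemma \ref{complex-product}, Remark \ref{dim}, and Theorem \ref{alternative}, with no genuine additional difficulty.
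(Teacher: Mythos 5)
Your argument is essentially the paper's own proof: both apply Lemma \ref{complex-product} to the universal covers, use the identification $\widetilde{X}\times\widetilde{Y}=\widetilde{X\times Y}$, and descend to $2$-skeleta via Remark \ref{dim} together with $\bigl(\widetilde{X\times Y}\bigr)^2=\widetilde{(X\times Y)^2}$. The one point you gloss over is that Theorem \ref{alternative} is stated only for \emph{infinite} finitely presented groups, so when $G$ (or $H$) is finite your appeal to it is not literally licensed; the paper disposes of that case separately via finite-index subgroups and Remark \ref{quasi-isometric}, although your route also survives there because any two finite simply connected CW-complexes are trivially proper $2$-equivalent.
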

\begin{proof} First, if either $G$ (and hence $G'$) or $H$ (and hence $H'$) is finite then $G \times H$ and $G' \times
H'$ have proper $2$-equivalent finite index subgroups, and hence we are done by Remark \ref{quasi-isometric}, as any two finite groups are proper $2$-equivalent. Otherwise,
there exist finite $2$-dimensional CW-complexes $X, X', Y$ and $Y'$ having $G, G', H$
and $H'$ as fundamental groups and so that $\widetilde{X}$ and $\widetilde{X'}$ are proper $2$-equivalent to
$\widetilde{Y}$ and $\widetilde{Y'}$ respectively. By Lemma \ref{complex-product} above, $\widetilde{X} \times
\widetilde{Y} = \widetilde{X \times Y}$ is then proper $2$-equivalent to $\widetilde{X'} \times \widetilde{Y'} =
\widetilde{X' \times Y'}$ and hence so they are their $2$-skeleta, by Remark \ref{dim}. The conclusion follows, since $\displaystyle \left( \widetilde{X \times Y} \right)^2 = \widetilde{(X \times
Y)^2}$ and $\displaystyle \left( \widetilde{X' \times Y'} \right)^2 = \widetilde{(X' \times Y')^2}$.
\end{proof}
More generally, Proposition \ref{group-product} together with \cite[Thm. 16.8.4]{Geo} yield the following result.
\begin{proposition} \label{group-extension} Let $1 \rightarrow G \lga K \lga H \rightarrow 1$ and $1 \rightarrow G' \lga
K' \lga H' \rightarrow 1$ be two short exact sequences of finitely presented groups, and assume that $G$ and $H$ are
proper $2$-equivalent to $G'$ and $H'$ respectively. Then, $K$ is proper $2$-equivalent to $K'$. In fact, $K$ and $K'$
are proper $2$-equivalent to $G \times H$ and $G' \times H'$ respectively.
\end{proposition}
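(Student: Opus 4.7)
The plan is to follow the two-step reduction the authors themselves suggest: prove the ``in fact" clause first, and then derive the equivalence $K \sim K'$ as a corollary of that together with Proposition \ref{group-product} and the transitivity of proper $2$-equivalence.

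The main step is to establish that, given a short exact sequence $1 \to G \to K \to H \to 1$ of finitely presented groups, the middle term $K$ is proper $2$-equivalent to the direct product $G \times H$. For this I would invoke \cite[Thm.\ 16.8.4]{Geo}, which is designed precisely to compare the proper homotopy type of the universal cover of a finite $2$-complex $X$ with $\pi_1(X) \cong K$ against that of a product $\widetilde{X_G} \times \widetilde{X_H}$ coming from finite $2$-complexes for the kernel and quotient. Concretely, given finite $2$-dimensional CW-complexes $X_G, X_H, X_K$ for $G, H, K$, that theorem provides the proper $2$-equivalence between $\widetilde{X_K}$ and $\widetilde{X_G \times X_H} = \widetilde{X_G} \times \widetilde{X_H}$ (or, passing to $2$-skeleta via Remark \ref{dim}, at the level that Definition \ref{p2e} requires). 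Translating this into the group-theoretic language of Definition \ref{p2e} yields $K \sim G \times H$ as proper $2$-equivalent groups, and the same argument applied to the second sequence gives $K' \sim G' \times H'$.

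With the ``in fact" clause in hand, the equivalence $K \sim K'$ is immediate. By hypothesis $G \sim G'$ and $H \sim H'$, so Proposition \ref{group-product} gives $G \times H \sim G' \times H'$. Chaining
\[
K \; \sim \; G \times H \; \sim \; G' \times H' \; \sim \; K'
\]
through the transitivity of the proper $2$-equivalence relation (the corollary just above) produces the desired conclusion.

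The main obstacle is really the first step, i.e.\ the reduction $K \sim G \times H$. The appeal to \cite[Thm.\ 16.8.4]{Geo} is where all the genuine content lives: one must check that the conclusion of that theorem is strong enough to yield proper $2$-equivalence of the relevant universal covers (not merely a weaker end-invariant statement), and that the finite $2$-skeleta one extracts are indeed universal covers of finite $2$-complexes for $K$ and $G \times H$. Once this translation is verified, the remainder of the proof is purely formal, combining Proposition \ref{group-product} with transitivity.
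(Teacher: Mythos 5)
Your proposal follows the paper's proof essentially verbatim: the paper likewise invokes \cite[Thm.~16.8.4]{Geo} to produce finite complexes $Z$ and $Z'$ with $\pi_1(Z)\cong K$, $\pi_1(Z')\cong K'$ whose universal covers are proper $2$-equivalent to $\widetilde{X}\times\widetilde{Y}$ and $\widetilde{X'}\times\widetilde{Y'}$ respectively, and then concludes $K\sim G\times H\sim G'\times H'\sim K'$ via Proposition~\ref{group-product} and transitivity. The one point you omit is the case where $G$ (hence $G'$) or $H$ (hence $H'$) is finite: the cited theorem concerns extensions of \emph{infinite} finitely presented groups (and Theorem~\ref{alternative}, which justifies the translation into the language of proper $2$-equivalent groups, is also stated for infinite groups), so the paper first disposes of this case via Remark~\ref{quasi-isometric} --- e.g.\ if $G$ is finite then $K\to H$ has finite kernel and $H\leq G\times H$ has finite index, so $K$, $H$ and $G\times H$ are all proper $2$-equivalent. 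This is a routine addition, but as written your argument does not cover that boundary case.
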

\begin{proof} If either $G$ (and hence $G'$) or $H$ (and hence $H'$) is finite, then the conclusion easily follows from
Remark \ref{quasi-isometric}. Otherwise, by \cite[Thm. 16.8.4]{Geo}, given finite $2$-dimensional CW-complexes  $X, X',
Y$ and $Y'$ having $G, G', H$ and $H'$ as fundamental groups, there exist finite CW-complexes $Z$ and $Z'$ having $K$
and $K'$ as fundamental groups and so that their universal covers $\widetilde{Z}$ and $\widetilde{Z'}$ are proper
$2$-equivalent to $\widetilde{X} \times \widetilde{Y}$ and $\widetilde{X'} \times \widetilde{Y'}$ respectively (and
hence so are their $2$-skeleta). Therefore, $K$ and $K'$ are proper $2$-equivalent to $G \times H$ and $G' \times H'$
respectively, and the conclusion follows then from Proposition \ref{group-product}.
\end{proof}
Notice that if the groups $G, G', H$ and $H'$ in Propositions \ref{group-product} and \ref{group-extension} above are
all infinite then the corresponding products (resp. group extensions) are $1$-ended and semistable at infinity \cite{M}
and have pro-isomorphic fundamental pro-groups, by Proposition \ref{characterisation}. It is worth mentioning that they
are all of telescopic type at infinity (see $\S 5$), i.e., their fundamental pro-groups are always pro-free and
pro-(finitely generated) \cite{CLR, CLQRoy}; in fact, they are all proper $2$-equivalent to one of the following groups
${\Z} \times {\Z} \times {\Z}$, ${\Z} \times {\Z}$ or ${\F}_2 \times {\Z}$, as we will show in $\S \ref{App2}$.

\section{The infinite ended case}

Since the quasi-isometry relation is stronger than the proper $2$-equivalence relation for finitely presented groups,
the following result follows from \cite[Lemma 1.4]{PaWhy} together with Remark \ref{quasi-isometric}.
\begin{lemma} \label{power} Let $G$ be an infinite finitely presented group, and $n \geq 2$. Then, $G*
\stackrel{(n)}{\cdots} *G$ is proper $2$-equivalent to $G*G$ (in fact, they are quasi-isometric).
\end{lemma}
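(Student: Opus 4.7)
The plan is to factor the argument through the stronger claim that $G* \stackrel{(n)}{\cdots} *G$ and $G*G$ are quasi-isometric, and then to apply Remark \ref{quasi-isometric} to upgrade quasi-isometry to proper $2$-equivalence. Both groups arise as fundamental groups of finite graphs of groups with trivial edge groups and every vertex group isomorphic (hence quasi-isometric) to $G$. Since $n \geq 2$, each decomposition has at least two vertex groups in which the trivial edge groups sit with infinite index (this uses the hypothesis that $G$ is infinite). Papasoglu and Whyte's \cite[Lemma 1.4]{PaWhy} asserts that for such graph-of-groups decompositions the quasi-isometry class of the fundamental group depends only on the \emph{set} of quasi-isometry classes of the vertex groups. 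Since both decompositions realize the same singleton set $\{[G]\}$ of quasi-isometry classes, the two free products are quasi-isometric.

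With the quasi-isometry in hand, Remark \ref{quasi-isometric} --- which records the implication \emph{quasi-isometric} $\Rightarrow$ \emph{proper $2$-equivalent} for finitely presented groups via \cite[Thm. 18.2.11]{Geo} --- closes the argument. No further analysis of $2$-skeleta or universal covers is needed here, since that work has already been packaged into the quasi-isometry implication recorded in the remark.

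The main ``obstacle'' is not really an obstacle: it is just the correct invocation of \cite[Lemma 1.4]{PaWhy}, in particular verifying its hypotheses that the vertex groups be infinite finitely presented and that each side genuinely display at least two free factors. Both conditions are immediate from the standing assumption that $G$ is infinite and finitely presented and that $n \geq 2$, so no additional work is needed. If one instead wanted to prove the lemma without appeal to \cite{PaWhy}, the hard step would be to construct an explicit quasi-isometry between Bass--Serre trees of spaces associated with the two free-product decompositions; but since we are free to cite the Papasoglu--Whyte result this subtlety does not enter.
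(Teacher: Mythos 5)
Your proposal is correct and follows essentially the same route as the paper, which likewise deduces the lemma directly from \cite[Lemma 1.4]{PaWhy} combined with Remark \ref{quasi-isometric}. The only difference is that you spell out the verification of the hypotheses of the Papasoglu--Whyte lemma, which the paper leaves implicit.
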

\begin{remark} Alternatively, an intuitive direct proof could be roughly as follows. Given any finite $2$-dimensional
CW-complex $X$ with $\pi_1(X) \cong G$ and denoting by $X_n = X \vee \stackrel{(n)}{\cdots} \vee X$ ($n \geq 2$) the
corresponding wedge, we claim that the universal covers $\widetilde{X_n}$ and $\widetilde{X_2}$ are already proper
homotopy equivalent. For this, observe that $\widetilde{X_n}$ consists of a tree-like arrangement of a collection of
copies of $\widetilde{X}$ in such a way that $n$ of those copies meet appropriately at each vertex of $\widetilde{X_n}$.
Given a vertex on a fixed copy of $\widetilde{X}$ inside $\widetilde{X_n}$, the idea is to start pushing somehow towards
infinity, i.e., outside some (increasing) finite subcomplex of $\widetilde{X_n}$ containing the given vertex, some of
the various copies of $\widetilde{X}$ we encounter within that finite subcomplex so as to keep only two of such copies
at each vertex, and keep repeating the argument to an increasing (finite) number of fixed copies of $\widetilde{X}$
inside $\widetilde{X_n}$. A limit process would then give us the desired proper homotopy equivalence. The missing
details are of the same sort and complexity as those shown in the argument for the proof of Lemma \ref{Antonio} and Proposition
\ref{free} below, and we omit them for the sake of simplicity.
\end{remark}
\begin{proposition} \label{free} Let $G, H$ and $K$ be infinite finitely presented groups, and assume that $G$ and $H$
are proper $2$-equivalent. Then, $G*K$ and $H*K$ are also proper $2$-equivalent.
\end{proposition}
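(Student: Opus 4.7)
The plan is to pass to universal covers of natural $2$-dimensional models for $G*K$ and $H*K$ and to build the required proper $2$-equivalence by a local-to-global construction along the Bass-Serre tree of the free product.

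Fix finite $2$-dimensional CW-complexes $X, Y, Z$ with a single $0$-cell and $\pi_1(X)\cong G$, $\pi_1(Y)\cong H$, $\pi_1(Z)\cong K$. Then $X\vee Z$ and $Y\vee Z$ are finite $2$-complexes with fundamental groups $G*K$ and $H*K$. The universal covers admit natural Bass-Serre decompositions as trees of spaces: a copy of $\widetilde{X}$ (resp.\ $\widetilde{Y}$) sits at each $G$-type (resp.\ $H$-type) vertex of the Bass-Serre tree, a copy of $\widetilde{Z}$ sits at each $K$-type vertex, and adjacent copies are glued at a single $0$-cell, namely a lift of the wedge point.

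By Theorem \ref{alternative}(c), the hypothesis supplies spherical objects $S^2_\alpha\sub\widetilde{X}$ and $S^2_\beta\sub\widetilde{Y}$ together with a cellular proper homotopy equivalence $\varphi:\widetilde{X}\vee S^2_\alpha\lga \widetilde{Y}\vee S^2_\beta$ and a cellular proper homotopy inverse $\psi$. I would use $\varphi$ as a building block in order to produce a proper homotopy equivalence $\Phi:\widetilde{X\vee Z}\vee S^2_\gamma\lga \widetilde{Y\vee Z}\vee S^2_\delta$ by applying $\varphi$ on each $\widetilde{X}$-copy and the identity on each $\widetilde{Z}$-copy. The spherical objects $S^2_\gamma$ and $S^2_\delta$ are obtained by placing a copy of $S^2_\alpha$ on each $\widetilde{X}$-copy and a copy of $S^2_\beta$ on each $\widetilde{Y}$-copy of the corresponding trees. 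Invoking Theorem \ref{alternative}(c) once more then yields the desired proper $2$-equivalence between $G*K$ and $H*K$.

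The main obstacle is that $\varphi$ need not carry $0$-cells bijectively to $0$-cells, yet the $0$-cells of $\widetilde{X}$ are precisely the attaching points of the adjacent $\widetilde{Z}$-copies: each $\widetilde{Z}$-copy attached at a $0$-cell $v$ of a given $\widetilde{X}$-copy must be rerouted, up to proper homotopy, so that its attaching point lands on $\varphi(v)$. To organize this coherently, I would exhaust the Bass-Serre tree by an increasing sequence of finite subtrees $T_1\sub T_2\sub\cdots$ and inductively define cellular maps on the corresponding finite subcomplexes of $\widetilde{X\vee Z}\vee S^2_\gamma$, pushing the accumulating discrepancies further and further out towards infinity, in the spirit of the limit argument described in the remark following Lemma \ref{power}. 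At each stage the classification of spherical objects \cite[Prop. II.4.5]{BQ}, which identifies spherical objects by their induced distribution on the space of ends, is used to absorb the finite mismatch of spheres arising from the non-injectivity of $\varphi$ on $0$-cells. A symmetric construction with $\psi$ provides a proper homotopy inverse, and the limit of the stages produces the required $\Phi$. Carrying through this limit and sphere-absorption step is the main technical content of the proof.
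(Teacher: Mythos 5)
Your global architecture --- decompose $\widetilde{X\vee Z}$ and $\widetilde{Y\vee Z}$ as tree-like arrangements of copies of $\widetilde{X}$ (resp.\ $\widetilde{Y}$) and $\widetilde{Z}$ glued at lifts of the wedge point, and build the equivalence copy by copy --- is exactly the paper's, and you correctly isolate the obstacle: the equivalence $\varphi$ need not match up the $0$-cells, which are precisely the attaching points of the $\widetilde{Z}$-copies. The gap is in how you propose to resolve it. The paper's resolution is Lemma \ref{Antonio}: any proper homotopy equivalence between infinite locally finite complexes is properly homotopic to one restricting to a \emph{bijection} of $0$-skeleta, and moreover one can prescribe the image of any single chosen vertex. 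With this in hand no discrepancy ever accumulates: in the induction over the heights of the copies, each new copy $\widetilde{X}_p$ is mapped by a freshly adjusted equivalence (Lemma \ref{Antonio} applied to $\psi^{-1}\circ f\circ\varphi$) that is bijective on vertices and sends the one attaching vertex of $\widetilde{X}_p$ to the attaching vertex dictated by the lower stages, and the gluing lemma \cite[Lemma I.4.9]{BQ} assembles the pieces. Your proposal never establishes this vertex-bijection statement, and it is the real content of the proposition (its proof occupies roughly two pages of combinatorics on rooted trees together with the proper homotopy extension property).

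Moreover, the two mechanisms you do invoke do not address the obstacle you identified. The classification of spherical objects controls how $2$-spheres are distributed over the ends; it cannot repair a failure of injectivity of $\varphi$ on $0$-cells, because the defect there is not a surplus of spheres but the fact that two infinite $\widetilde{Z}$-copies would have to be attached at the same point of the target, which changes the combinatorics of the tree of spaces and hence, potentially, the end structure. And the ``push discrepancies to infinity'' heuristic from the remark after Lemma \ref{power} is not an independent technique you can cite here: the paper explicitly states that the missing details of that remark are of the same sort as those in the proofs of Lemma \ref{Antonio} and Proposition \ref{free} themselves, so appealing to it is circular. To complete your argument you would need to prove (some form of) Lemma \ref{Antonio}, after which your construction becomes essentially the paper's.
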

The converse of this result does not hold in general, i.e., common free factors can not be canceled out and still obtain
a proper $2$-equivalence in general. A simple example would be ${\Z} * {\Z}$ versus ${\F}_2 * {\Z}$. We need the following technical lemma for the proof of Proposition \ref{free}.

\begin{lemma} \label{Antonio} Let $f : X \lga Y$ be a proper homotopy equivalence between two infinite
finite-dimensional locally finite (connected) CW-complexes. Then, $f$ is properly homotopic to a map $g : X \lga
Y$ which is a bijection $g : X^0 \lga Y^0$ between the $0$-skeleta. Moreover, given arbitrary vertices $v_0 \in X$ and
$w_0 \in Y$, the map $g$ can be chosen so that $g(v_0)=w_0$.
\end{lemma}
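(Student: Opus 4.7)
The plan is to produce $g$ as the stable limit of a countable concatenation of compactly-supported proper homotopies, each modifying $f$ at only finitely many vertices, organized so that the supports escape to infinity. First, by the proper cellular approximation theorem I replace $f$ by a proper cellular map (still called $f$) and fix a proper cellular homotopy inverse $f' \colon Y \lga X$. I choose finite-subcomplex exhaustions $K_1 \sub K_2 \sub \cdots$ of $X$ and $L_1 \sub L_2 \sub \cdots$ of $Y$ with $v_0 \in K_1$, $w_0 \in L_1$, and pass to subsequences so that $f(K_n) \sub L_n$, $f'(L_n) \sub K_n$, while $f^{-1}(L_n) \sub K_{m(n)}$ and $(f')^{-1}(K_n) \sub L_{m(n)}$ for a rapidly increasing $m$.

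The elementary move is a \emph{vertex slide}: given $v \in X^0$ with $f(v)=w$ and an edge $e$ of $Y$ from $w$ to a neighbor $w'$, there is a cellular proper homotopy from $f$ to a cellular map $f_1$ that agrees with $f$ outside the closed star $\overline{\mathrm{st}}(v) \sub X$, satisfies $f_1(v)=w'$, and whose track is confined to a small neighborhood of $f(\overline{\mathrm{st}}(v)) \cup e$. It is built by exploiting the cone structure on $\overline{\mathrm{st}}(v)$ and dragging the cone point along $e$. The basepoint condition is arranged at the outset by applying finitely many such slides along a fixed edge-path in $Y$ from $f(v_0)$ to $w_0$; from then on $v_0$ is fixed throughout.

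Next I construct $g$ shell by shell. At stage $n$, I assume a cellular map $f_n$ properly homotopic to $f$ for which $f_n|_{K_n^0}$ is a bijection onto some finite subset of $Y^0$ containing $L_n^0$, and all prior slides have been supported in $K_{m(n)}$. Passing to stage $n+1$ requires finitely many slides, all supported in $X \setminus K_{m(n)}$ and projecting to $Y \setminus L_n$: \emph{collisions} among the new vertices in $K_{n+1}^0 \setminus K_n^0$ are resolved by sliding a colliding vertex to a vacant neighbor of its image; \emph{missing} vertices $w \in L_{n+1}^0 \setminus f_n(K_n^0)$ are covered by choosing an unused vertex of $X^0$ near $f'(w)$ and sliding it along a short edge-path onto $w$. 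The sequence $f_n$ stabilizes pointwise, and the concatenation of stage-$n$ homotopies is proper because their supports escape to infinity; the limit is the desired cellular map $g$, bijective on $0$-skeleta with $g(v_0)=w_0$.

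The main obstacle is ensuring that at each shell the required slides have combinatorial length bounded by a fixed tracking function, so that the supports genuinely escape and the concatenation is proper. The proper homotopies $f \circ f' \simeq \mathrm{id}_Y$ and $f' \circ f \simeq \mathrm{id}_X$ supply such tracking data: they force the discrepancy between $|K_n^0|$ and $|L_n^0|$ to be bounded in terms of the homotopies' tracking functions, and they confine $f'(w)$ to a bounded filtration-neighborhood of $w$ for each $w \in Y^0$, guaranteeing that the ``nearby unused vertex'' needed in the surjectivity step exists within a controlled neighborhood of the current shell. Verifying that these cardinality and distance bounds propagate correctly through the induction, and that slides can always be chosen to avoid previously-committed vertices, is the technical heart of the argument.
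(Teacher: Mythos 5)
Your overall architecture is essentially the paper's: make $f$ cellular, fix the basepoint by dragging along an edge path, then modify $f$ on the $0$-skeleton in stages (first achieving surjectivity, then injectivity) so that the modifications escape to infinity, and conclude that the total homotopy is proper. The paper packages the modification differently --- it defines the new vertex map $h^0\colon X^0\to Y^0$ globally, observes that the family of connecting arcs $\{\Gamma(f(v),h^0(v))\}_{v\in X^0}$ in a maximal tree $T_Y$ is locally finite, and extends the resulting proper homotopy on $X\times\{0\}\cup X^0\times I$ by the Proper Homotopy Extension Property, rather than concatenating infinitely many star-supported ``slides'' --- but that is a stylistic difference, not a substantive one.

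The genuine gap is in the step you yourself flag as the technical heart: the existence, at each stage, of an \emph{uncommitted} vertex that can be moved onto the target. You insist that the missed vertex $w$ be covered by an unused vertex ``near $f'(w)$'' via a ``short edge-path,'' and that collisions be resolved by sliding to a ``vacant neighbor.'' Since $f$ is proper, for any fixed radius $R$ the set of vertices $v$ with $f(v)$ within distance $R$ of $w$ is \emph{finite}, and nothing prevents all of them (in particular the single candidate $f'(w)$, which several different $w$'s may share) from having been committed at earlier stages; likewise all neighbors of a collision point may be occupied. So the bounded-distance requirement can simply fail to be satisfiable. Moreover it is not needed: properness of the limiting homotopy only requires that the family of tracks $\{\Gamma(f(v),g(v))\}$ be locally finite, not that the tracks be short. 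The paper's mechanism --- which is the idea missing from your proposal --- is to fix a root and a height function on a maximal tree, require only that the track from $f(v)$ (resp.\ $h(v)$) to its new image stay at height $\ge |w|-1$ (condition (c) in the paper), and then use the fact that $f$ induces a homeomorphism of end spaces, together with the unboundedness of the complementary components of the finite subtrees, to produce \emph{infinitely many} candidate vertices in the correct end beyond any given compact set. Infinitely many candidates guarantees an uncommitted one exists at every stage; the height condition guarantees local finiteness of the tracks and hence properness. Without replacing your bounded tracking-function control by this weaker, end-based control, both the surjectivity and the injectivity inductions can get stuck.
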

\begin{proof}
We may always assume that $f$ is a cellular map, by the Proper Cellular Approximation Theorem (see \cite{Geo}).
Moreover, given arbitrary vertices $v_0 \in X$ and $w_0 \in Y$, we may further assume that $f(v_0) = w_0$. Indeed, let
$\gamma: I \lga Y$ be an edge path with $\gamma(0) = f(v_0)$ and $\gamma(1) = w_0$. By applying the Proper Homotopy
Extension Property
to $f$ and the path $\gamma$, we get a proper map $\widetilde {f}: X \lga Y$ properly homotopic to $f$ with
$\widetilde{f}(v_0) = w_0$.\\
\indent Let $T_X \subset X$ and $T_Y \subset Y$ be maximal trees. By choosing $v_0 \in T_X$ as a root vertex we have the
usual ordering on the vertices of $X$ by setting $v \leq v'$ if $v$ lies in the unique path $\Gamma(v_0,v') \subset T_X$
from $v_0$ to $v'$. Moreover, we write $|v| = n$ if $\Gamma(v_0,v)$ contains exactly $n+1$ vertices. The integer $|n|$
is termed the height of $v$. More generally, given a subcomplex $Z\subset X$ we write $|Z| = \min \{|v| \; , v \in
Z^0\}$. Similarly, by fixing  $w_0 \in T_Y$ as a root vertex, we have the same kind of ordering on $Y^0$.\\
\indent We are now ready to prove the lemma. For this, we set $X^0(n) = \{v \in X^0 \; , |v| \leq n \}$ and let $T_X(n)
\subset T_X$ denote the finite subtree generated by the set $X^0(n)$. Similarly, we define $Y^0(n)$ and $T_Y(n)$, $n
\geq 1$. We first find a proper map $h : X \lga Y$ properly homotopic to $f$ and such that $h$ restricts to a surjection
$h : X^0 \lga Y^0$. The definition of the map $h$ will follow from the inductive construction of an increasing
subsequence $0= n_0 < n_1 < \dots < n_j < \dots$ and maps $h_j:  X^0(n_j) \lga Y^0$ satisfying the following properties:
\begin{enumerate}
\item [(a)] $h_j$ extends $h_{j-1}$.
\item [(b)] $Y^0(j) \subset h_j(X^0(n_j))$.
\item [(c)] $|\Gamma(f(v),h_j(v))| \geq \min \{|f(v)|, j\}-1$, if $|v| > n_{j-1}$.
\end{enumerate}
In order to construct the maps $h_j$, we start choosing a subsequence $m_1 < m_2 < \dots$ such that all components of
$cl(T_Y - T_Y(m_i))$ are unbounded. For the sake of simplicity, we assume that the tree $T_Y$ is such that we
can choose $m_i = i$ for all $i\geq 1$, easing the reading of what follows. This way, any vertex $w \in Y^0$ with $|w| =
j$ is either a terminal vertex or there are infinitely many vertices $w' \geq w$. In any case, we can find infinitely
many vertices with the property
\begin{equation}\label{F1}
|w'| \geq j+1 \mbox{ and } |\Gamma(w,w')|\geq j-1.
\end{equation}

We start simply by taking $n_0 =0$, and $h_0(v_0) = f(v_0) = w_0$.\\

\indent Assume $h_j$ already constructed. Let $L^Y_{k} = \{w \in Y^0 \; , |w| = k \mbox{ and } h_j^{-1}(w) = \emptyset
\}$. By induction, $k_j = \min \{k \; , L^Y_k \neq \emptyset\} \geq j+1$. For each $w \in L^Y_{k_j}$, we choose a vertex
$v_w\in X^0$ such that
$|v_w| > n_j$ and $|\Gamma(w,f(v_w)|\geq |w|-1 = k_j-1 \geq j$.
 Here we use (\ref{F1}) and the fact that $f$ induces a homeomorphism between the corresponding spaces of ends. Next, we
choose $n_{j+1} > n_j$ large enough to have
$\{v_w \; , w \in L^Y_{k_j}\} \cup X^0(n_j) \sub X^0(n_{j+1})$, and define $h_{j+1} : X^0(n_{j+1}) \lga Y^0$ by setting
$h_{j+1}(v) = h_j(v)$ if $v\in X^0(n_j)$,
$h_{j+1}(v_w) = w$ if $w\in L^Y_{k_j}$ and $h_{j+1}(v) = f(v)$ otherwise. Clearly, $h_{j+1}$ satisfies conditions (a)
and (b above). In order to verify (c), we observe that for any vertex $|v| > n_j$ we have $|\Gamma(h_{j+1}(v_w),
f(v_w))| = |\Gamma(w,f(v_w))| \geq |w|-1 \geq j$ and $|\Gamma(h_{j+1}(v), f(v))| = |f(v)|$ if $v \neq v_w$, for all $w
\in L^Y_{k_j}$.\\
\indent This way, the union $\displaystyle h^0 = \cup_j h_j : X^0 \lga Y^0$ is a proper surjection by
conditions (b) and (c) above. Furthermore, condition (c) yields that the family of arcs $\{\Gamma(f(v), h^0(v))\}_{v \in
X^0}$ is locally finite and hence the map $H^0 : X \times \{0\} \cup X^0 \times I \lga Y$ given by $H^0(x,0) = f(x)$ and
$H^0|\{v\} \times I = \Gamma(h^0(v),f(v))$ is proper. Thus, by the Proper Homotopy Extension Property, $H^0$ extends to
a proper homotopy $H: X \times I \lga Y$, and $h = H_1$ is the required map.\\
\indent Next, we will modify the map $h$ so as to get the desired proper map $g: X \lga Y$. This time, we will construct
injective maps $g_j : h^{-1}(Y^0(j)) \lga Y^0$ inductively with the following properties
\begin{enumerate}
\item [(a)] $g_j$ extends $g_{j-1}$.
\item [(b)] $Y^0(j) \subset g_j(h^{-1}(Y^0(j)))$.
\item [(c)] $|\Gamma(h(v),g_j(v))| \geq |h(v)|-1$, for all $v \in X^0$.
\end{enumerate}
since $h_0^{-1}(w_0) = \{v_0\}$, we simply set $g_0 = h_0: \{v_0\} \lga Y^0$. Assume $g_j$ already defined. Given $w \in
Y^0$ with $|w| = j+1$ and $h^{-1}(w) = \{v_0^w,v_1^w, \dots, v^w_{k(w)}\}$, we define $g_{j+1}$ on $h^{-1}(w)$ according
to the following two cases
\begin{enumerate}
\item There is some $v \in h^{-1}(Y^0(j))$ with $w = g_j(v)$. Then, we define the vertices $g_{j+1}(v_i^w)$ ($0 \leq i
\leq k(w)$) so as to form a set of $k(w)+1$ vertices in $Y^0 - g_j(h^{-1}(Y^0(j)))$ such that $|\Gamma
(g_{j+1}(v_i^w), g_j(v))| \geq |g_j(v)|-1$.
\item Assume $w$ is outside the image of $g_j$. Then, we define $g_{j+1}(v_0^w) = w$ and the vertices $g_{j+1}(v_i^w)$
($1\leq i \leq k(w)$) so as to form a set of
$k(w)$ vertices in $Y^0 - g_j(h^{-1}(Y^0(j)))$ with $|g_{j+1}(v_i^w)| > |w|$ and $|\Gamma(g_{j+1}(v_i^w),w)|
\geq |w|-1$.
\end{enumerate}
In both cases, we also choose the vertices in such a way that $g_{j+1}(v^w_i) \neq g_{j+1}(v_{i'}^{w'})$ for all $i\leq
k(w)$ and $i'\leq k(w')$ whenever $w\neq w'$. Notice that $g_{j+1}$ thus defined satisfies conditions (a) and (b) above.
Moreover, condition (c) also holds for $g_{j+1}$. This is obvious if $v \in h^{-1}(Y^0(j))$ by the induction hypothesis.
Otherwise, if $v \notin h^{-1}(Y^0(j))$ but $h(v) = g_j(v')$ for some $v'\in h^ {-1}(Y^0(j))$, we have
\[
|\Gamma(g_{j+1}(v),h(v))| = |\Gamma(g_{j+1}(v),g_j(v'))|\geq |g_j(v')| - 1 = |h(v)| - 1
\]
Finally, if $v \notin h^{-1}(Y^0(j))$ and $h(v)$ misses the image of $g_j$, then
\[
|\Gamma(g_{j+1}(v),h(v))|\geq |h(v)| - 1, \mbox{ for } h(v) = w \mbox{ as in } (2)
\]
\indent The map $g^0: X^0 \lga Y^0$ given by $g^0(v) = g_j(v)$ if $v \in h^{-1}(Y^0(j))$ is then injective, and hence a
bijection by property (b). Moreover, property (c) yields that the family of arcs $\{\Gamma(g^0(v),h(v))\}_{v \in X^0}$
is locally finite and hence we obtain a proper map $g: X \lga Y$ properly homotopic to $h$ by applying the Proper
Homotopy Extension Property to
$G^0 : X \times \{0\} \cup X^0 \times I \lga Y$ given by $G^0(x,0) = h(x)$ and $G^0|\{v\} \times I = \Gamma(g^0(v),h(v))$.
\end{proof}
\begin{proof}[Proof of Proposition \ref{free}] Let $X, Y$ and $Z$ be finite $2$-dimensional CW-complexes having $G, H$
and $K$ as fundamental groups and whose $0$-skeleta consists of a single vertex $X^0 = \{x_0\}, Y^0 =\{y_0\}$ and $Z^0 =
\{z_0\}$, together with a (cellular) proper homotopy equivalence $f : \widetilde{X} \lga \widetilde{Y}$ and base points
$\widetilde{x}_0 \in \widetilde{X}, \widetilde{y}_0 = f(\widetilde{x}_0) \in \widetilde{Y}$ and $\widetilde{z}_0 \in
\widetilde{Z}$. We may further assume that $f$ is a bijection between the $0$-skeleta, by Lema \ref{Antonio}. It is
clear that $\pi_1(X \vee Z) \cong G*K$ and $\pi_1(Y \vee Z) \cong H*K$. We will show that $G*K$ and $H*K$ are proper
$2$-equivalent by finding a proper homotopy equivalence between the universal covers $\widetilde{X \vee Z}$ and
$\widetilde{Y \vee Z}$. According to \cite{SWa}, these universal covers can be described as in \cite{ACLQ} as follows.\\

\indent (i) The universal cover $\widetilde{X \vee Z}$ is a tree-like arrangement of countably many copies
$\widetilde{X}_p$ and $\widetilde{Z}_r$ of $\widetilde{X}$ and $\widetilde{Z}$, whose vertices  are identified to get
the $0$-skeleton $\widetilde{X\vee Z}^0$ via a bijection $\alpha : {\N} \times {\N} \lga {\N} \times {\N}$ given by the
group action of $G*K$ on $\widetilde{X \vee Z}$; that is, by choosing a bijection for each $p$ and $r$, the
$0$-skeleta $\widetilde{X}^0_p$ and $\widetilde{Z}^0_r$ can be regarded as the sets $\{(p,q); q \in {\N}\}$ and $\{(r,s); s
\in {\N}\}$, respectively, and then $(p,q)$ gets identified with $\alpha(p,q)$ to obtain the $0$-skeleton $\widetilde{X
\vee Z}^0$.

\indent (ii) Similarly, the universal cover $\widetilde{Y \vee Z}$ is a tree-like arrangement of copies
$\widetilde{Y}_a$ and ${\widetilde{Z}'}_c$ of $\widetilde{Y}$ and $\widetilde{Z}$, whose vertices $\widetilde{Y}^0_a =
\{(a,b)\}_{b \in {\N}}$ and $\widetilde{Z'}^0_c = \{(c,d)\}_{d \in {\N}}$ are identified to get the $0$-skeleton
$\widetilde{Y\vee Z}^0$ according to a bijection $\alpha' : {\N} \times {\N} \lga {\N} \times {\N}$.\\

\indent We choose a ``root" copy $\widetilde{X}_{p_0} \sub \widetilde{X \vee Z}$ which gives the height $0$ for the
copies of $\widetilde{X}$. Then the copies of $\widetilde{Z}$ at height $0$  are the $\widetilde{Z}_r$'s for which
$\alpha(p_0,q) = (r,s)$ for some $q,s \in {\N}$. Now a copy $\widetilde{X}_p$ ($p \neq p_0$) is said to be at height $1$ if a
vertex in $\widetilde{X}_p$ is identified to a vertex of a copy of $\widetilde{Z}$ at height $0$. This way we can
define the height of any copy of $\widetilde{X}$ and $\widetilde{Z}$. Let $| \ |_1$ denote this height function. Similarly, by choosing a ``root" copy $\widetilde{Y}_{a_0} \sub \widetilde{Y \vee Z}$ we can define a height function $| \ |_2$ for the copies of $\widetilde{Y}_a$ and $\widetilde{Z}_c$.\\
\indent For each $k \geq 0$, let $\displaystyle L^k_1(X) = \bigsqcup_{|\widetilde{X}_p|_1 \leq k} \widetilde{X}_p$ and $\displaystyle L^k_1(Z) = \bigsqcup_{|\widetilde{Z}_r|_1 \leq
k} \widetilde{Z}_r$. Similarly, we define
$\displaystyle L^k_2(Y) = \bigsqcup_{|\widetilde{Y}_a|_2 \leq k} \widetilde{Y}_a$ and $\displaystyle L^k_2(Z) = \bigsqcup_{|{\widetilde{Z}'}_c|_2 \leq k} {\widetilde{Z}'}_c$.\\
\indent We are ready to define inductively (cellular) proper homotopy equivalences
\[
f_k: L^k_1(X) \lga L^k_2(Y) \mbox{ and } g_k: L^k_1(Z) \lga L^k_2(Z)
\]
such that $f_k$ and $g_k$ are extensions of $f_{k-1}$ and $g_{k-1}$, respectively, and $g_k \circ \alpha = \alpha' \circ
f_k$ for all $k\geq 0$. We start by considering cellular
homeomorphisms $\varphi: \widetilde{X}_{p_0} \lga \widetilde{X}$ and $\psi: \widetilde{Y}_{a_0} \lga \widetilde{Y}$  and
by Lemma \ref{Antonio} we choose $f_0: \widetilde{X}_{p_0} \lga \widetilde{Y}_{a_0}$ to be a (cellular) proper map
properly homotopic to $\psi^{-1}\circ f\circ \varphi$ which restricts to a bijection between the $0$-skeleta. In order
to define $g_0: L^0_1(Z)\lga L^0_2(Z)$, let $Z_r$ with $|Z_r|_1 = 0$. Then there exist exactly  two indices $s,q \in
{\N}$ such that $\alpha(p_0,q) = (r,s)$. Let $c \in {\N}$ be such that the copy ${\widetilde{Z}'}_c$ contains a
vertex $(c,d)$ for which $\alpha'(f_0(p_0,q)) = (c,d)$. Then we apply Lemma \ref{Antonio} to any
cellular homeomorphism $Z_r\cong Z_c$ to get a (cellular) proper homotopy equivalence $g_{0,r}: \widetilde{Z}_r \lga
{\widetilde{Z}'}_c$ which carries the vertex $(r,s) \in \widetilde{Z}_r$ to the vertex $(c,d) \in {\widetilde{Z}'}_c$. The
union map of all $g_{0,r}$ defines a (cellular) proper homotopy equivalence $g_0: L^0_1(Z) \lga L^0_2(Z)$ which
restricts to a bijection between $0$-skeleta. Moreover, by definition $g_0 \circ \alpha = \alpha' \circ f_0$.\\
\indent Assume that we have already defined the maps $f_k$ and $g_k$. For any $\widetilde{X}_p$ with $|\widetilde{X}_p|_1 =
k+1$ we find exactly a vertex $(p,q) \in \widetilde{X}_p$ such that the vertex $(r,s) = \alpha(p,q)$ belongs to a copy
$\widetilde{Z}_r$ with $|\widetilde{Z}_r|_1 = k$. Let $\widetilde{Y}_a$ be the only copy at height $k+1$ for which there
exists a vertex $(a,b) \in \widetilde{Y}_a$ with $\alpha'(a,b) = g_k(r,s)$.
Then if $\varphi: \widetilde{X}_p \cong \widetilde{X}$ and $\psi: \widetilde{Y}_a \cong \widetilde{Y}$ are (cellular)
homeomorphisms we apply Lemma \ref{Antonio} to the composite $\psi^{-1}\circ f\circ \varphi$ to get a proper homotopy
equivalence $f_{k+1,p}: \widetilde{X}_p \lga \widetilde{Y}_a$ restricting to a bijection between the $0$-skeleta and
$f_{k+1,p}(p,q) = (a,b)$. The union of the maps $f_k$ and $f_{k+1,p}$ yields a proper homotopy equivalence $f_{k+1}:
L^{k+1}_1(X) \lga L^{k+1}_2(Y)$ which extends $f_k$. A similar argument can be applied to define $g_{k+1}$ as an extension of $g_k$.\\
\indent Once the $f_k$'s and $g_k$'s have been defined, the obvious filtered maps $f: \bigsqcup_{p \in {\N}}
\widetilde{X}_p \lga \bigsqcup_{a \in {\N}} \widetilde{Y}_a$ and $g: \bigsqcup_{r \in {\N}} \widetilde{Z}_r
\lga \bigsqcup_{c \in {\N}} {\widetilde{Z}'}_c$ defined by $f|L^k_1(X) = f_k$ and $g|L^k_1(Z) = g_k$  turn to be
(cellular) proper homotopy equivalences. Moreover, the following diagram commutes

\[
\xymatrix{
\ar[d]^{f} \displaystyle \bigsqcup_{p \in {\N}} \widetilde{X}_p &
\ar[l]_{i} {\N} \times {\N} \ar[d]^{f_0} \ar[r]^{\alpha} & {\N} \times {\N} \ar[d]^{g_0} \ar[r]^{j} & \displaystyle
\bigsqcup_{r \in {\N}} \widetilde{Z}_r \ar[d]^{g}\\
\displaystyle \bigsqcup_{a \in {\N}} \widetilde{Y}_a  & \ar[l]_{i'} {\N} \times {\N} \ar[r]^{\alpha'} & {\N} \times {\N}
\ar[r]^{j'} & \displaystyle \bigsqcup_{c \in {\N}} {\widetilde{Z}'}_c\\
}
\]
where the maps $i$, $i'$, $j$, $j'$ denote the inclusions of $0$-skeleta and $f_0$ and $g_0$ are the corresponding
restrictions.
Furthermore, $\widetilde{X \vee Z}$ and $\widetilde{Y \vee Z}$ are the pushouts of the first and second row, respectively.
Thus, by the gluing lemma \cite[Lemma I.4.9]{BQ}, the map $f \cup g : \widetilde{X \vee Z} \lga \widetilde{Y \vee Z}$
obtained by the pushout construction is a proper homotopy equivalence.
\end{proof}
\begin{corollary} Let $G, H$ and $K$ be finitely presented groups. If $G$ and $H$ are proper $2$-equivalent and both
$G*K$ and $H*K$ are infinite ended, then $G*K$ and $H*K$ are also proper $2$-equivalent.
\end{corollary}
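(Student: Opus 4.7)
Here's how I would prove this. The plan is to reduce to Proposition~\ref{free} via a Kurosh / Bass--Serre computation that identifies a well-understood finite-index subgroup on each side, together with Lemma~\ref{power} and the passage to finite-index subgroups (Remark~\ref{quasi-isometric}).

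Since the number of ends is invariant under proper $2$-equivalence, $G$ and $H$ are either both infinite or both finite. If $K$ is trivial the conclusion is just the hypothesis, and if $K$ is infinite with $G, H$ both infinite the result is Proposition~\ref{free}. The substantial new case is $K$ a nontrivial finite group with $G, H$ infinite. I would consider the epimorphism $\varphi : G*K \lga K$ defined by $\varphi|_G = 1$ and $\varphi|_K = \mathrm{id}$; its kernel $N$ has index $|K|$ in $G*K$, and an application of the Kurosh subgroup theorem (equivalently, analysing the action of $N$ on the Bass--Serre tree of $G*K$: the quotient graph of groups is a star with one central vertex of trivial group, $|K|$ leaves each labelled $G$, and trivial edge groups) shows that $N \cong G^{*|K|}$. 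Since $G$ is infinite and $|K| \geq 2$, Lemma~\ref{power} yields that $N$ is proper $2$-equivalent to $G*G$, and analogously the corresponding kernel $N'$ of $H*K \lga K$ is proper $2$-equivalent to $H*H$. Two successive applications of Proposition~\ref{free} (first with free factor $G$ on the right, then with free factor $H$ on the left) give that $G*G$, $G*H$ and $H*H$ are pairwise proper $2$-equivalent; hence $N$ is proper $2$-equivalent to $N'$, and Remark~\ref{quasi-isometric} transports this equivalence back to $G*K$ and $H*K$.

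The symmetric situation in which $G, H$ are finite and $K$ is infinite is handled by the analogous Kurosh argument applied to $G*K \lga G$ and $H*K \lga H$: provided both $|G|$ and $|H|$ are at least $2$, the corresponding kernels are $K^{*|G|}$ and $K^{*|H|}$, both proper $2$-equivalent to $K*K$ by Lemma~\ref{power}, and we conclude via Remark~\ref{quasi-isometric}. When $G, H, K$ are all finite, the infinite-ended hypothesis forces $G*K$ and $H*K$ to be finitely generated, virtually free, of infinitely many ends, hence both quasi-isometric to ${\F}_2$ and therefore proper $2$-equivalent.

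The residual degenerate case---one of $G, H$ trivial while the other is a nontrivial finite group and $K$ is infinite---is the main obstacle: one side of the desired equivalence collapses to $K$, while the other Kurosh-reduces to $K*K$, so the task becomes to show that an infinite-ended finitely presented $K$ is proper $2$-equivalent to $K*K$. This may be seen through Theorem~\ref{graph} (the graph-of-groups classification announced in the introduction, applied to the Stallings--Dunwoody decompositions of $K$ and $K*K$, which share the same set of vertex-group proper $2$-equivalence types), or by invoking the quasi-isometric classification of \cite{PaWhy} to obtain $K$ quasi-isometric to $K*K$ and then applying Remark~\ref{quasi-isometric}.
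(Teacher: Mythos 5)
Your proof is correct and follows essentially the same route as the paper's: the same case analysis according to which of $G, H, K$ are finite, passage to the finite-index free-power subgroups (which the paper merely asserts and you justify via Kurosh/Bass--Serre), and the same combination of Lemma~\ref{power}, Proposition~\ref{free} and Remark~\ref{quasi-isometric}. The only divergence is that you explicitly isolate the degenerate case where one of the finite factors is trivial---a case the paper's proof glosses over---and your resolution of it via Theorem~\ref{graph} or the quasi-isometry results of \cite{PaWhy} is sound.
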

\begin{proof} The proof goes as in \cite[Thm. 0.1]{PaWhy}, using Lemma \ref{power} and Proposition \ref{free} above.
First, if $G,H$ and $K$ are finite then $G*K$ and $H*K$ are virtually free groups which are not virtually cyclic, since
they are infinite ended by assumption, and hence proper $2$-equivalent by Lemma \ref{power}.\\
\indent Second, if $G$ and $H$ are finite but $K$ is infinite then $G*K$ and $H*K$ contain finite index subgroups
isomorphic to $K* \stackrel{|G|}{\cdots} *K$ and $K* \stackrel{|H|}{\cdots} *K$ respectively, which are proper
$2$-equivalent by Lemma \ref{power}.\\
\indent Similarly, if $G$ and $H$ are infinite but $K$ is finite then $G*K$ and $H*K$ contain finite index subgroups
isomorphic to $G* \stackrel{|K|}{\cdots} *G$ and $H* \stackrel{|K|}{\cdots} *H$, which are proper $2$-equivalent to
$G*G$ and $H*H$ respectively, by Lemma \ref{power}, and hence the conclusion follows as $G*K$ and $H*K$ are then
proper $2$-equivalent to $G*G$ and $H*H$ respectively, which in turn are both proper $2$-equivalent to $G*H$, by
Proposition \ref{free}.\\
\indent Finally, the case when $G,H$ and $K$ are infinite corresponds to Proposition \ref{free}.
\end{proof}
Again, since quasi-isometric (finitely presented) groups are in particular proper $2$-equivalent, \cite[Thm. 0.2]{PaWhy}
together with \cite[Cor. 1.2]{CLQR_AMS} also yields the following
\begin{proposition} \label{amalgam} Let $G$ and $H$ be finitely presented groups and $F$ be a common finite proper
subgroup. If both $G*_F H$ and $G*H$ are infinite ended, then they are proper $2$-equivalent (in fact, they are
quasi-isometric). Similarly, if both $G*_F$ and $G*{\Z}$ are infinite ended, then they are proper $2$-equivalent.
\end{proposition}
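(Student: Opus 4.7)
The plan is a direct two-step reduction: first import a quasi-isometry result from Papasoglu--Whyte, then upgrade the conclusion from quasi-isometry to proper $2$-equivalence using machinery already recorded in Remark \ref{quasi-isometric}.

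In detail, I would first invoke \cite[Thm. 0.2]{PaWhy}, which asserts that if $G$ and $H$ are finitely presented groups, $F$ is a common finite proper subgroup, and both $G*_F H$ and $G*H$ are infinite ended, then $G*_F H$ and $G*H$ are quasi-isometric; the HNN analogue in the same paper handles $G*_F$ versus $G*{\Z}$. I would treat this as a black box, since the bulk of Papasoglu--Whyte's work lies in building an explicit quasi-isometry between the Bass--Serre trees associated to the two graph-of-groups decompositions (both have finite edge groups and the same vertex groups), matching up vertex spaces of equal quasi-isometry type.

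Once quasi-isometry is established, Remark \ref{quasi-isometric} (which in turn rests on \cite[Thm. 18.2.11]{Geo} and \cite[Cor. 1.2]{CLQR_AMS}) immediately converts this into a proper $2$-equivalence of the corresponding finitely presented groups. Applying this conversion in the amalgamated case gives that $G*_F H$ and $G*H$ are proper $2$-equivalent, and in the HNN case that $G*_F$ and $G*{\Z}$ are proper $2$-equivalent.

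Because both pieces are drawn essentially verbatim from the literature, there is no substantive obstacle in the argument; the only point requiring care is that the infinite-ended hypothesis in the proposition is precisely the hypothesis needed by \cite[Thm. 0.2]{PaWhy} (it is exactly what rules out the degenerate situation in which $G*_F H$ or $G*_F$ fails to admit an accessible Stallings-type decomposition that matches the free-product side), so the hypotheses transfer without modification.
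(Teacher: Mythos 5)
Your proposal matches the paper's own argument essentially verbatim: the paper derives Proposition \ref{amalgam} by citing \cite[Thm. 0.2]{PaWhy} for the quasi-isometry between $G*_F H$ and $G*H$ (and the HNN analogue), and then converts quasi-isometry into proper $2$-equivalence via \cite[Thm. 18.2.11]{Geo} and \cite[Cor. 1.2]{CLQR_AMS}, exactly as in Remark \ref{quasi-isometric}. This is correct and no further detail is needed.
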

\begin{remark} It is worth mentioning that an argument similar to that in the proof of Proposition \ref{free} above
together with the alternative description, within its proper homotopy type, given in the proof of \cite[Thm. 1.1]{CLQR1}
for the corresponding universal cover provide a direct proof of Proposition \ref{amalgam} in the case the factor groups
(resp. the base group) are infinite.
\end{remark}
As a consequence of all of the above, we obtain that the proper $2$-equivalence relation behaves well with respect to
amalgamated products (resp., HNN-extensions) over finite groups; namely,
\begin{theorem} \label{amalgam2} Let $G, G', H$ and $H'$ be finitely presented groups, and $F, F'$ be common finite
proper subgroups of $G, H$ and $G', H'$ respectively. Assume that $G$ is proper $2$-equivalent to $G'$ and $H$ is proper
$2$-equivalent to $H'$. If both $G*_FH$ and $G' *_{F'} H'$ are infinite ended, then they are proper
$2$-equivalent. Similarly, if both $G*_F$ and $G' *_{F'}$ are infinite ended, then they are proper
$2$-equivalent.
\end{theorem}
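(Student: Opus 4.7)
The plan is to reduce the statement to the free product case and then chain the corollary on free factors (the one immediately preceding Proposition \ref{amalgam}). For the amalgamated case, I proceed in two stages: first, use Proposition \ref{amalgam} to replace $G*_F H$ by $G*H$ and $G'*_{F'}H'$ by $G'*H'$; second, pass from $G*H$ to $G'*H'$ by two successive applications of the corollary through a suitable intermediate free product.

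To justify the first stage, I must verify that $G*H$ and $G'*H'$ are themselves infinite ended so that Proposition \ref{amalgam} applies. Since $F$ is a finite proper subgroup of both $G$ and $H$, we have $|G|, |H| \geq 2$; moreover, if $|G| = |H| = 2$ then $F$ would be trivial and $G*_F H = \Z_2 * \Z_2$ would be two-ended, contradicting our hypothesis. Therefore $(|G|,|H|) \neq (2,2)$ and, combined with $|G|, |H| \geq 2$, this forces $G*H$ to be infinite ended; the same reasoning applies to $G'*H'$. Proposition \ref{amalgam} then yields that $G*_F H$ and $G'*_{F'}H'$ are proper $2$-equivalent to $G*H$ and $G'*H'$ respectively.

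For the second stage, since the number of ends is a proper $2$-equivalence invariant, $G$ is finite if and only if $G'$ is, and likewise for $H$ and $H'$. I claim that at least one of $G'*H$ or $G*H'$ is infinite ended. Indeed, if $G$ (and hence $G'$) is infinite, then $G'*H$ contains the infinite nontrivial subgroup $G'$ of infinite index, so it is infinite ended; the same holds symmetrically if $H$ is infinite. In the remaining case where all four groups are finite, failure of $G'*H$ to be infinite ended would force $|G'| = |H| = 2$, while failure of $G*H'$ would force $|G| = |H'| = 2$; if both failed simultaneously then $|G| = |H| = 2$, contradicting the fact (established above) that $G*H$ is infinite ended. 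Whichever intermediary is infinite ended, two successive applications of the corollary on free factors (changing the appropriate factor at each step) give that $G*H$ is proper $2$-equivalent to $G'*H'$. Composing with stage one concludes the amalgamated case.

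The HNN case is entirely parallel: by the second part of Proposition \ref{amalgam}, the groups $G*_F$ and $G'*_{F'}$ are proper $2$-equivalent to $G*\Z$ and $G'*\Z$ respectively (and $G*\Z$ is automatically infinite ended whenever $|G| \geq 2$, since the dihedral obstruction requires both free factors to have order two, which is ruled out by the factor $\Z$). A single application of the corollary on free factors with $K = \Z$ then yields that $G*\Z$ is proper $2$-equivalent to $G'*\Z$, and hence $G*_F$ is proper $2$-equivalent to $G'*_{F'}$. The main subtlety in the whole argument is the careful bookkeeping of infinite-endedness at each intermediate step, especially when switching factors in the free product; the rest is a formal chaining of previously established results.
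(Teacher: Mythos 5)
Your argument is correct and is essentially the derivation the paper intends: Theorem \ref{amalgam2} is stated there as ``a consequence of all of the above,'' i.e.\ of Proposition \ref{amalgam} and the Corollary to Proposition \ref{free}, chained exactly as you do, with the end-counting bookkeeping you supply being the only content to check. One small repair: your justification that $G'*H$ is infinite ended when $G'$ is infinite --- that it ``contains an infinite subgroup of infinite index'' --- is not a valid criterion in general (e.g.\ ${\Z}\times{\Z}$ is $1$-ended); the correct appeal is to the standard fact that a free product of two nontrivial groups is infinite ended unless both factors have order two, which is ruled out here since $G'$ is infinite and $|H|\geq 2$.
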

Observe that any finitely presented group $G$ with more than one end can be decomposed as the fundamental group of a
finite graph of groups $({\mathcal G}, \Gamma)$ whose edge groups are finite and whose vertex groups are finitely
presented groups with at most one end, by Stallings' Structure theorem \cite{Sta} and Dunwoody's accesibility theorem
for finitely presented groups \cite{D}. Thus, as the fundamental group of a graph of groups with $n+1$ edges can be
built out of graphs with fewer edges (by amalgamated products or HNN-extensions), an inductive argument gives us the
following generalisation of Theorem \ref{amalgam2}.
\begin{theorem} \label{graph} Let $G$ and $G'$ be two infinite ended finitely presented groups, and assume they
are expressed as fundamental groups of finite graphs of groups $({\mathcal G}, \Gamma)$ and $({\mathcal G}', \Gamma')$
whose edge groups are finite and whose vertex groups are finitely presented groups with at most one end. If the two
graph of groups decompositions have the same set of proper $2$-equivalence classes of vertex groups (without
multiplicities) then $G$ and $G'$ are proper $2$-equivalent.
\end{theorem}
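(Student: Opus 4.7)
The plan is to proceed by induction on the number of edges of the underlying graph $\Gamma$, with the base case of a single edge being precisely Theorem \ref{amalgam2}, which handles an amalgamated product or HNN extension over a finite group in which the factor groups are replaced by proper $2$-equivalent ones.

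First, I would reduce both $G$ and $G'$ to a canonical free-product form. By peeling off edges of $\Gamma$ one at a time, each removal expresses the current group as either an amalgamated product or an HNN extension over a finite edge group involving fundamental groups of strictly smaller graphs of groups. Applying Proposition \ref{amalgam} at each step converts every amalgamated product (resp.\ HNN extension) over a finite proper subgroup into the corresponding free product (resp.\ free product with ${\Z}$). Iterating the procedure, $G$ becomes proper $2$-equivalent to $V_{i_1} \ast V_{i_2} \ast \cdots \ast V_{i_v} \ast {\F}_r$, where $V_{i_1},\ldots,V_{i_v}$ are the vertex groups of $\Gamma$ (listed with multiplicities) and $r$ is the first Betti number of $\Gamma$. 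The analogous reduction is carried out for $G'$.

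Second, I would apply Proposition \ref{free} to substitute each vertex group in these free products by a fixed canonical representative of its proper $2$-equivalence class. Since by hypothesis the classes appearing in $\Gamma$ and $\Gamma'$ form the same set $S=\{[W_1],\ldots,[W_k]\}$, after this normalization both $G$ and $G'$ become proper $2$-equivalent to free products whose factors lie in the common collection $\{W_1,\ldots,W_k\}$, possibly with different multiplicities and different free-factor ranks.

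The main obstacle, which I expect to be the most delicate step, is to show that all such normalized free products lie in a single proper $2$-equivalence class, depending only on the set $S$ and not on the multiplicities of the $W_j$ or on the rank of the free factor. Lemma \ref{power} collapses excess multiplicities of each $W_j$ down to two copies, reducing the remaining discrepancy to the possibly differing ranks of the free factors arising from the HNN extensions. The final reconciliation can be achieved by invoking the Papasoglu--Whyte quasi-isometry classification of infinitely-ended groups \cite{PaWhy}, which, combined with Remark \ref{quasi-isometric}, implies that any two normalized free products drawn from the same set $\{W_1,\ldots,W_k\}$ are quasi-isometric and hence proper $2$-equivalent. This then yields $G$ proper $2$-equivalent to $G'$, as desired.
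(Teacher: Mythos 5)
Your proposal is correct, and it actually supplies more detail than the paper itself, whose entire proof of Theorem \ref{graph} is the remark that a graph of groups with $n+1$ edges is built from graphs with fewer edges by amalgamations and HNN-extensions over the finite edge groups, so that ``an inductive argument'' generalises Theorem \ref{amalgam2}. Where you genuinely diverge is in the endgame: the paper's intended induction stays inside the proper-homotopy toolkit (Theorem \ref{amalgam2}, Proposition \ref{free}, Lemma \ref{power}), whereas you normalise both groups to free products of fixed representatives $W_1,\dots,W_k$ and then close the argument by citing the Papasoglu--Whyte classification \cite[Thm.~0.4]{PaWhy} together with Remark \ref{quasi-isometric}. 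This is legitimate: after normalisation the two free products are built from literally the same one-ended groups, hence have the same set of quasi-isometry classes of one-ended vertex groups, so they are quasi-isometric and therefore proper $2$-equivalent. Moreover, your final appeal to \cite{PaWhy} cleanly disposes of two residual discrepancies that the tools quoted in the paper do not obviously handle: (i) a class occurring once in one normal form and at least twice in the other --- Lemma \ref{power} only collapses multiplicities $\geq 2$, though one can repair this internally by first doubling the entire free product via Lemma \ref{power} and then collapsing each $W_j^{*2a_j}$ back to $W_j^{*2}$ using Proposition \ref{free}; and (ii) the free factor ${\F}_r$ coming from loops of $\Gamma$, i.e.\ the comparison of $W*W$ with $W*{\Z}$, which is an amalgam-versus-HNN comparison not covered by Theorem \ref{amalgam2}. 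Two small points to watch when writing this up: when a factor being replaced is finite you need the (unlabelled) Corollary following Proposition \ref{free} rather than Proposition \ref{free} itself, and each application of Proposition \ref{amalgam} requires checking that both the amalgam (or HNN-extension) and the corresponding free product are infinite ended --- this does hold for the pieces that actually arise, since a piece violating it is either a redundant edge or forces $G$ to have at most two ends, but it deserves a sentence.
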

\begin{remark} In the case $G$ and $G'$ are both semistable at each end, then $({\mathcal G}, \Gamma)$ and $({\mathcal G}',
\Gamma')$ having the same set of proper $2$-equivalence classes of $1$-ended vertex groups amounts to having the same
set of pro-isomorphism types for the fundamental pro-groups of the $1$-ended vertex groups, by Proposition
\ref{characterisation}.
\end{remark}
It is worth mentioning that the converse of Theorem \ref{graph} does not hold in general, unlike the situation under the
quasi-isometry equivalence relation (compare with \cite[Thm. 0.4]{PaWhy}). A counterexample to the converse will be
given in $\S \ref{counterexample}$.

\section{Groups of type $F_n, n \geq 2$}

We recall that a group $G$ has type $F_n$, $n \geq 1$, if there
exists a $K(G,1)$-complex with finite $n$-skeleton. Being of type
$F_1$ is then the same as being finitely generated, and
being of type $F_2$ is the same as being finitely
presented. It is worth mentioning that Bieri-Stallings' groups
provide examples of groups which are of type $F_{n-1}$ but not of
type $F_n$, $n \geq 3$ (see \cite{Bi}). Given $n \geq 2$, we consider
the following relation among groups of type $F_n$, $n \geq 2$.
\begin{definition} \label{pne} Two groups $G$ and $H$ of type $F_n$ are {\it proper $n$-equivalent} if there exist (equivalently, for all) finite $n$-dimensional $(n-1)$-aspherical CW-complexes
$X$ and $Y$, with $\pi_1(X) \cong G$ and $\pi_1(Y) \cong H$, so that their universal covers $\widetilde{X}$ and
$\widetilde{Y}$ are proper $n$-equivalent.
\end{definition}
Again, one can check that any two finite groups are proper $n$-equivalent. It is easy to see that Definition \ref{pne} agrees with Definition \ref{p2e} for $n=2$. Moreover, proper
$(n+1)$-equivalent implies proper $n$-equivalent, $n \geq 2$. As in $\S 2$ (for $n=2$), we next show that this
definition does not depend on the choice of the corresponding CW-complexes, and provide with an alternative equivalent definition via proper homotopy equivalences (after taking wedge
with $n$-spheres).
\begin{theorem} \label{alternative2} Let $G$ and $H$ be two infinite groups of type $F_n$, and let $X$ and $Y$ be any finite $n$-dimensional $(n-1)$-aspherical CW-complexes
with $\pi_1(X) \cong G$ and $\pi_1(Y) \cong H$. Then, if $\widetilde{X}$ and $\widetilde{Y}$ denote the corresponding universal covers, the following statemens are equivalent:
\begin{enumerate}
\item[(a)] The groups $G$ and $H$ are proper $n$-equivalent.
\item[(b)] $\widetilde{X}$ and $\widetilde{Y}$ are proper $n$-equivalent (in the sense of Definition \ref{AQ1}).
\item[(c)] There exist $n$-spherical objects $S^n_\alpha$ and $S^n_\beta$ so that $\widetilde{X} \vee S^n_\alpha$ and $\widetilde{Y} \vee S^n_\beta$ are proper homotopy equivalent.
\item[(d)] The universal covers $\widetilde{X \vee S^n}$ and $\widetilde{Y \vee S^n}$ are proper homotopy equivalent.
\end{enumerate}
\end{theorem}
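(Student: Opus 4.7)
The plan is to mirror the proof of Theorem \ref{alternative}, adapting each implication from dimension two to dimension $n$. All the tools used there admit straightforward $n$-dimensional analogues: the proper cellular approximation theorem, Remark \ref{dim}, and the classification of spherical objects of \cite[Prop. II.4.5]{BQ}.

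For (a) $\Rightarrow$ (b), the hypothesis that $X$ and $Y$ are $n$-dimensional and $(n-1)$-aspherical lets us attach only cells of dimension $\geq n+1$ to obtain $K(G,1)$- and $K(H,1)$-complexes $X'$ and $Y'$ with $(X')^n = X$ and $(Y')^n = Y$; the same applies to any other witnesses $W,Z$ of the proper $n$-equivalence of $G$ and $H$. Since $X'$ and $W'$ are homotopy equivalent $K(G,1)$'s with finite $n$-skeleta, their universal covers are proper $n$-equivalent by the appropriate generalization of \cite[Thm. 18.2.11]{Geo}; combined with Remark \ref{dim}, this yields that $\widetilde{X} = (\widetilde{X'})^n$ and $\widetilde{W}$ are proper $n$-equivalent. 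Arguing similarly for $\widetilde{Y}$ and $\widetilde{Z}$ and invoking transitivity gives (b).

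The implication (b) $\Rightarrow$ (d) follows by adapting the proof of \cite[Cor. 1.2]{CLQR_AMS}, replacing $2$-spheres by $n$-spheres throughout. The step (d) $\Rightarrow$ (c) is immediate from the identity $\widetilde{X \vee S^n} = \widetilde{X} \vee S^n_T$, where $S^n_T$ is the universal $n$-spherical object attached along a maximal tree $T \sub \widetilde{X}$. Next, (d) $\Rightarrow$ (a) is immediate since $X \vee S^n$ remains a finite $n$-dimensional $(n-1)$-aspherical complex with $\pi_1 \cong G$ (wedging with $S^n$ does not affect the lower homotopy groups, as $S^n$ is $(n-1)$-connected). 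For (c) $\Rightarrow$ (d) we chain proper homotopy equivalences
\[
\widetilde{X} \vee S^n_T \simeq (\widetilde{X} \vee S^n_\alpha) \vee S^n_T \simeq (\widetilde{Y} \vee S^n_\beta) \vee S^n_\omega \simeq \widetilde{Y} \vee S^n_{T'},
\]
where $\omega = f|_T$ for a cellular proper homotopy equivalence $f : \widetilde{X} \vee S^n_\alpha \lga \widetilde{Y} \vee S^n_\beta$, and $S^n_\omega$ denotes the $n$-spherical object attaching $\# \omega^{-1}(v)$ copies of $S^n$ at each vertex $v \in T'$. The outer equivalences apply the classification of $n$-spherical objects of \cite[Prop. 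II.4.5]{BQ}, using in the last step that $\omega$ induces a homeomorphism ${\mathcal E}(T) \cong {\mathcal E}(T')$ of the spaces of ends (since $f$ is a proper homotopy equivalence and the maximal trees carry the ends of the ambient complexes), while the middle equivalence is induced by $f$ together with its restriction $\omega$.

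The main technical obstacle I expect is confirming that \cite[Prop. II.4.5]{BQ} and \cite[Thm. 18.2.11]{Geo} yield the needed $n$-dimensional statements, namely the classification of $n$-spherical objects for all $n \geq 2$ and proper $n$-equivalence of universal covers of homotopy equivalent complexes with finite $n$-skeleta, respectively. Since neither argument in the $n=2$ case appears to rely on anything specific to that dimension, both extensions should be essentially routine, but they deserve to be verified carefully.
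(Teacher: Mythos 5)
Your proposal is correct and follows essentially the same route as the paper: the authors prove (a) $\Rightarrow$ (b) by the same $K(G,1)$-completion argument and then state that the remaining implications carry over verbatim from the $n=2$ case, citing \cite[Thm. 1.1]{CLQR_AMS} and \cite{Zobel} for exactly the higher-dimensional ingredients whose verification you flag as the main technical point. Your extra observation that $X \vee S^n$ stays $(n-1)$-aspherical (needed for (d) $\Rightarrow$ (a) when $n>2$) is a small but correct addition that the paper leaves implicit.
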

\noindent Indeed, if $G$ and $H$ are proper $n$-equivalent then there exist finite $n$-dimensional
$(n-1)$-aspherical CW-complexes $W$ and $Z$, with $\pi_1(W) \cong G$ and $\pi_1(Z) \cong H$, so that the universal
covers $\widetilde{W}$ and $\widetilde{Z}$ are proper $n$-equivalent. We now consider $K(G,1)$-complexes $X'$ and $W'$ with $(X')^n = X$ and $(W')^n = W$, and $K(H,1)$-complexes
$Y'$ and $Z'$ with $(Y')^n = Y$ and $(Z')^n = Z$. By the proper cellular approximation theorem, it is not hard to check
that $\widetilde{(X')^n} = \widetilde{X}$ and $\widetilde{(W')^n} = \widetilde{W}$ are proper $n$-equivalent as $X'$ and
$W'$ are homotopy equivalent. It also follows from \cite[Thm. 18.2.11]{Geo}, since $\pi_1(X') \cong \pi_1(W') \cong G$.
Similarly, $\widetilde{Y} = \widetilde{(Y')^n}$ and $\widetilde{Z} = \widetilde{(Z')^n}$ are proper $n$-equivalent. The rest of the argument follows just as in the proof of Theorem \ref{alternative} (see \cite[Thm. 1.1]{CLQR_AMS} and \cite{Zobel}).
\begin{corollary} The relation of being proper $n$-equivalent is an equivalence relation for groups of type $F_n$, $n
\geq 2$.
\end{corollary}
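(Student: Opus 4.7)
The plan is to mirror the proof of the analogous corollary for $n=2$, using Theorem \ref{alternative2} in place of Theorem \ref{alternative}. Reflexivity follows by taking $f = g = \mathrm{id}$ in Definition \ref{AQ1}, and symmetry is built directly into that definition, so both pass at once to the group-level relation. The only substantive check is transitivity.

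For transitivity, let $G, H, K$ be groups of type $F_n$ with $G$ proper $n$-equivalent to $H$ and $H$ proper $n$-equivalent to $K$. The case in which all three are finite is handled by the observation, already noted in the paper, that any two finite groups are proper $n$-equivalent; mixed finite/infinite cases do not actually occur, since a proper map from a non-compact CW-complex to a compact one cannot exist (the preimage of the whole compact target would have to be compact), so finite and infinite groups are never proper $n$-equivalent. Assuming then that $G, H, K$ are all infinite, pick finite $n$-dimensional $(n-1)$-aspherical CW-complexes $X, Y, Z$ with $\pi_1(X) \cong G$, $\pi_1(Y) \cong H$, and $\pi_1(Z) \cong K$. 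Theorem \ref{alternative2} applied to each hypothesis gives proper homotopy equivalences
\[
\widetilde{X \vee S^n} \simeq \widetilde{Y \vee S^n} \simeq \widetilde{Z \vee S^n},
\]
and their composition is again a proper homotopy equivalence $\widetilde{X \vee S^n} \simeq \widetilde{Z \vee S^n}$. Since $\pi_1(X \vee S^n) \cong G$ and $\pi_1(Z \vee S^n) \cong K$, the reverse direction of Theorem \ref{alternative2} then shows $G$ and $K$ are proper $n$-equivalent, provided $X \vee S^n$ and $Z \vee S^n$ qualify as admissible witnesses.

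The main technical point I anticipate is precisely this admissibility check: $X \vee S^n$ must remain finite, $n$-dimensional, and $(n-1)$-aspherical. Finiteness and $n$-dimensionality are immediate, and $(n-1)$-asphericity follows from the standard description of $\widetilde{X \vee S^n}$ as $\widetilde{X}$ with an $n$-sphere wedged at each lift of the basepoint, together with the fact that wedging an $(n-1)$-connected complex with $n$-spheres preserves $(n-1)$-connectivity, by cellular approximation. This is essentially the only new verification beyond the $n=2$ case, and it is routine. As a direct alternative bypassing Theorem \ref{alternative2}, one may instead compose the witnessing proper cellular maps from Definition \ref{AQ1} and splice the $(n-1)$-skeletal proper homotopies via proper cellular approximation; both approaches yield the same conclusion.
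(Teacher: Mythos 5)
Your proof is correct and follows essentially the same route as the paper's: the paper likewise notes that transitivity follows directly from transitivity of proper $n$-equivalences of CW-complexes, and then gives the alternative argument via Theorem \ref{alternative2}, composing the proper homotopy equivalences $\widetilde{X \vee S^n} \simeq \widetilde{Y \vee S^n} \simeq \widetilde{Z \vee S^n}$ and observing that $X \vee S^n$ and $Z \vee S^n$ are admissible witnesses (finite, $n$-dimensional, $(n-1)$-aspherical). Your additional remarks on the finite/infinite cases and the asphericity check are correct but routine.
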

\begin{proof} As in $\S 2$, it follows from the transitivity of proper $n$-equivalences for CW-complexes. Alternatively, one can also show transitivity as follows. Let $G, H$ and $K$ be infinite groups of type $F_n$ so that $G$ is
proper $n$-equivalent to $H$ and $H$ is proper $n$-equivalent to $K$, and let
$X, Y$ and $Z$ be finite $n$-dimensional $(n-1)$-aspherical CW-complexes with $\pi_1(X) \cong G, \pi_1(Y)
\cong H$ and $\pi_1(Z) \cong K$. By Theorem \ref{alternative2}, we have that $\widetilde{X \vee S^n}$ is proper homotopy
equivalent to $\widetilde{Y \vee S^n}$ which in turn is proper homotopy equivalent to $\widetilde{Z \vee S^n}$. Thus,
$G$ and $K$ are proper $n$-equivalent, since $X \vee S^n$ and $Z \vee S^n$ are finite $n$-dimensional $(n-1)$-aspherical
CW-complexes with $\pi_1(X \vee S^n) \cong G$ and $\pi_1(Z \vee S^n) \cong K$.
\end{proof}
\begin{remark}
Again, if $G$ and $H$ are two infinite quasi-isometric groups of type $F_n$ ($n \geq 2$), then it follows from
\cite[Thm. 18.2.11]{Geo} that $G$ and $H$ are proper $n$-equivalent in the above
sense (even if a proper homotopy equivalence is required instead of a proper $n$-equivalence in Definition \ref{pne}, by Theorem \ref{alternative2}). Again, by the \v{S}varc-Milnor Lemma, we have as an immediate consequence that if $H \leq G$ with $[G:H] < \infty$ and $N \leq G$ is a finite normal subgroup then $G$, $H$ and $G/N$ are proper $n$-equivalent to each other.
\end{remark}
Recall that given a group $G$ of type
$F_n$ ($n \geq 1$) and a $K(G,1)$-complex $X$ with finite
$n$-skeleton, we say that $G$ is {\it $(n-1)$-connected at infinity}
if for any compact subset $C \sub \widetilde{X}$ there is a compact
subset $D \sub \widetilde{X}$ so that any map $S^m \lga
\widetilde{X} - D$ extends to a map $B^{m+1} \lga
\widetilde{X} - C$, for all $0 \leq m \leq n-1$ ($0$-connected at
infinity is the same as $1$-ended). It is not hard to check that
$(n-1)$-connectedness at infinity is an invariant under the proper
$n$-equivalence relation, as well as any other proper homotopy
invariant of the group $G$ which depends only up to the
$n$-skeleton of the universal cover of some $K(G,1)$-complex
with finite $n$-skeleton. Likewise, the cohomology group $H^n(G;{\Z}G)$ of a group $G$ of type $F_n$ is an invariant
under proper $n$-equivalences, as it is isomorphic to the cohomology group of the end $H^{n-1}_e(\widetilde{X};{\Z})$,
for any finite $n$-dimensional $(n-1)$-aspherical CW-complex $X$ with $\pi_1(X) \cong G$ (see \cite{Geo}). For a group
$G$ of type $F_n$, the cohomology group $H^n(G;{\Z}G)$ was shown in \cite{Ger} to be a quasi-isometry invariant of the
group.\\

\indent An argument similar to that in Proposition \ref{group-product} yields the following result.
\begin{proposition} \label{group-product2} Let $G, G', H$ and $H'$ be groups of type $F_n$ and assume that $G$ and $H$
are proper $n$-equivalent to $G'$ and $H'$ respectively. Then, $G \times H$ is proper $n$-equivalent to $G' \times H'$.
\end{proposition}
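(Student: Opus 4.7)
The plan is to mimic the proof of Proposition \ref{group-product} essentially word for word, substituting $n$ for $2$ throughout and verifying that the auxiliary complexes continue to carry the $(n-1)$-asphericity required by Definition \ref{pne}.

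First I would dispose of the degenerate case in which at least one factor is finite. If $G$ (and hence $G'$) or $H$ (and hence $H'$) is finite, then the other, possibly infinite factor sits as a finite-index subgroup inside $G \times H$ and inside $G' \times H'$; the $F_n$-analogue of Remark \ref{quasi-isometric} (the consequence of the \v{S}varc--Milnor Lemma recorded right after Theorem \ref{alternative2}) then reduces matters to the standing hypothesis that $G$ is proper $n$-equivalent to $G'$ and $H$ is proper $n$-equivalent to $H'$. The sub-case in which both factors are finite is trivial since any two finite groups are proper $n$-equivalent.

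Assume therefore that all four groups are infinite. I would choose finite $n$-dimensional $(n-1)$-aspherical CW-complexes $X, X', Y, Y'$ realizing $G, G', H, H'$ so that $\widetilde{X}$ is proper $n$-equivalent to $\widetilde{X'}$ and $\widetilde{Y}$ is proper $n$-equivalent to $\widetilde{Y'}$. Applying Lemma \ref{complex-product} (already stated for arbitrary $n$) to these universal covers gives that $\widetilde{X \times Y} = \widetilde{X} \times \widetilde{Y}$ is proper $n$-equivalent to $\widetilde{X'} \times \widetilde{Y'} = \widetilde{X' \times Y'}$. By Remark \ref{dim} the corresponding $n$-skeleta are then proper $n$-equivalent as well, and since taking the $n$-skeleton ($n \geq 2$) preserves $\pi_1$, it commutes with passage to universal covers; consequently $\widetilde{(X \times Y)^n}$ is proper $n$-equivalent to $\widetilde{(X' \times Y')^n}$.

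The step that genuinely demands attention---and therefore the main (if mild) obstacle---is verifying that $(X \times Y)^n$ and $(X' \times Y')^n$ qualify as test complexes for Definition \ref{pne}. Finiteness, $n$-dimensionality, and the identification of the fundamental groups with $G \times H$ and $G' \times H'$ are routine; the new ingredient beyond the $n=2$ case is $(n-1)$-asphericity. Here $X \times Y$ is itself $(n-1)$-aspherical, since $\pi_k(X \times Y) \cong \pi_k(X) \times \pi_k(Y)$ vanishes for $2 \leq k \leq n-1$, and cellular approximation ensures that the $n$-skeleton inclusion $(X \times Y)^n \hookrightarrow X \times Y$ induces isomorphisms on $\pi_k$ for $k \leq n-1$. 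Once this is checked, Definition \ref{pne} applies and delivers the desired proper $n$-equivalence between $G \times H$ and $G' \times H'$.
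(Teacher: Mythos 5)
Your proposal is correct and follows exactly the route the paper intends: the paper gives no separate proof here, saying only that ``an argument similar to that in Proposition \ref{group-product} yields the result,'' and your write-up is precisely that argument with $2$ replaced by $n$, plus the one genuinely new check --- that $(X\times Y)^n$ is $(n-1)$-aspherical --- carried out correctly via $\pi_k(X\times Y)\cong\pi_k(X)\times\pi_k(Y)$ and cellular approximation.
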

Next, we observe that if $1 \rightarrow N \lga G \lga Q \rightarrow 1$ is a short exact sequence of infinite groups and
$N$ and $Q$ are of type $F_n$, then so is $G$ (see \cite{Geo}). Moreover, it follows from \cite[Prop. 17.3.4]{Geo}
together with Therorem \ref{alternative2} that $G$ is in fact proper $n$-equivalent to $N \times Q$. Thus, using the
same argument as in Proposition \ref{group-extension} we have the following generalisation of Proposition
\ref{group-product2}.
\begin{proposition} \label{group-extension2} Let $1 \rightarrow G \lga K \lga H \rightarrow 1$ and $1 \rightarrow G'
\lga K' \lga H' \rightarrow 1$ be two short exact sequences of groups of type $F_n$, and assume that $G$ and $H$ are
proper $n$-equivalent to $G'$ and $H'$ respectively. Then, $K$ is proper $n$-equivalent to $K'$.
\end{proposition}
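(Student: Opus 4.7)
The plan is to replicate verbatim the structure of the proof of Proposition~\ref{group-extension}, substituting ``proper $2$-equivalent'' with ``proper $n$-equivalent'' throughout and invoking the $F_n$-analogues already established in this section.

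First I would dispose of the degenerate case: if one of $G, H$ (equivalently $G', H'$) is finite, the corresponding short exact sequences have the surviving infinite factor as a finite-index subgroup of $K$, or as the quotient of $K$ by a finite normal subgroup. The $F_n$-version of the \v{S}varc--Milnor consequences recorded in the remark immediately after Theorem~\ref{alternative2} then shows that $K$ is proper $n$-equivalent to the infinite factor, and similarly for $K'$. Since the two surviving factors are proper $n$-equivalent by hypothesis, transitivity closes this case.

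The main case is when all four of $G, G', H, H'$ are infinite (and hence so are $K$ and $K'$). Here the key input is the remark preceding the statement: by \cite[Prop. 17.3.4]{Geo} combined with Theorem~\ref{alternative2}, one has that $K$ is proper $n$-equivalent to $G \times H$ and $K'$ is proper $n$-equivalent to $G' \times H'$. Proposition~\ref{group-product2} then gives that $G \times H$ is proper $n$-equivalent to $G' \times H'$. Chaining these three proper $n$-equivalences produces the desired proper $n$-equivalence between $K$ and $K'$.

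The main obstacle lies not in the combinatorics — which reduces to a three-step transitivity argument already used verbatim in Proposition~\ref{group-extension} — but in ensuring that \cite[Prop. 17.3.4]{Geo} really does yield a proper $n$-equivalence between the universal cover of a finite $n$-dimensional $(n-1)$-aspherical model for $K$ and the product of the analogous universal covers for $G$ and $H$, rather than merely a proper homotopy equivalence after wedging with $n$-spheres. This is precisely the role played by Theorem~\ref{alternative2}, which bridges the two notions; once this bridge is in place the argument is formal and requires no further ingenuity beyond what is already present in Proposition~\ref{group-extension}.
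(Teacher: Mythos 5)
Your proposal is correct and follows essentially the same route as the paper: the paper proves this by noting (via \cite[Prop. 17.3.4]{Geo} together with Theorem \ref{alternative2}) that $K$ and $K'$ are proper $n$-equivalent to $G \times H$ and $G' \times H'$ respectively, handling the finite-factor case through the \v{S}varc--Milnor remark, and then applying Proposition \ref{group-product2} and transitivity, exactly as you describe.
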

Similarly, the same arguments used throughout $\S 3$ yield the following result.
\begin{theorem} \label{graph2} Let $G$ and $G'$ be two infinite ended finitely presented groups, and assume they
are expressed as fundamental groups of finite graphs of groups $({\mathcal G}, \Gamma)$ and $({\mathcal G}', \Gamma')$
whose edge groups are finite and whose vertex groups are of type $F_n$ with at most one end. If the two graph of groups
decompositions have the same set of proper $n$-equivalence classes of vertex groups (without multiplicities) then $G$
and $G'$ are proper $n$-equivalent.
\end{theorem}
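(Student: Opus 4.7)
The plan is to mimic, step by step, the arguments of $\S 3$, replacing ``proper $2$-equivalence'' by ``proper $n$-equivalence'' throughout. One needs the $F_n$-analogues of Lemma \ref{power}, Propositions \ref{free} and \ref{amalgam}, and Theorem \ref{amalgam2}; once these are in place, the same inductive argument on the number of edges of the graph of groups yields the stated result.

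The analogues of Lemma \ref{power} and Proposition \ref{amalgam} are essentially free. The quasi-isometry $G * \stackrel{(m)}{\cdots} * G \sim_{QI} G * G$ established in \cite[Lemma 1.4]{PaWhy} holds for arbitrary infinite finitely generated $G$, and (as already observed in this section) quasi-isometric groups of type $F_n$ are proper $n$-equivalent, which gives the needed statement. The same observation applied to \cite[Thm. 0.2]{PaWhy} yields the $F_n$-analogue of Proposition \ref{amalgam}.

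The main work is the $F_n$-analogue of Proposition \ref{free}: if $G$ and $H$ are proper $n$-equivalent infinite groups of type $F_n$ and $K$ is any infinite group of type $F_n$, then $G * K$ and $H * K$ are proper $n$-equivalent. I would follow the proof of Proposition \ref{free} almost verbatim, first invoking Theorem \ref{alternative2}(d) to replace the starting datum by a genuine proper homotopy equivalence $f : \widetilde{X \vee S^n} \lga \widetilde{Y \vee S^n}$ between the universal covers of finite $n$-dimensional $(n-1)$-aspherical CW-complexes. Lemma \ref{Antonio} applies verbatim in this setting (it is stated for arbitrary finite-dimensional locally finite CW-complexes), so $f$ can be arranged to be a bijection on the $0$-skeleta sending a chosen base vertex to a chosen base vertex. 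The tree-like description, from \cite{SWa}, of $\widetilde{(X \vee Z) \vee S^n}$ and $\widetilde{(Y \vee Z) \vee S^n}$ as iterated pushouts of copies of $\widetilde{X \vee S^n}$, $\widetilde{Y \vee S^n}$ and $\widetilde{Z \vee S^n}$, glued along orbits of the wedge points, lets us build proper homotopy equivalences $f_k$ and $g_k$ level-by-level by repeated application of Lemma \ref{Antonio}. Assembling them via the gluing lemma \cite[Lemma I.4.9]{BQ} produces a proper homotopy equivalence $\widetilde{(X \vee Z) \vee S^n} \simeq \widetilde{(Y \vee Z) \vee S^n}$, and a second appeal to Theorem \ref{alternative2}(d) concludes that $G * K$ is proper $n$-equivalent to $H * K$. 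The case analysis separating finite and infinite factor groups (the analogue of the unlabelled Corollary following Proposition \ref{free}) is then formal, using that any two finite groups are proper $n$-equivalent and that passing to finite index subgroups preserves the proper $n$-equivalence class.

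Combining the three upgraded statements yields the $F_n$-analogue of Theorem \ref{amalgam2}. Finally, since every finite graph of groups with $k+1$ edges is obtained from one with $k$ edges by a single amalgamated product or HNN extension (over a vertex group), the assumed bijection between the sets of proper $n$-equivalence classes of vertex groups of $({\mathcal G}, \Gamma)$ and $({\mathcal G}', \Gamma')$ allows us to match pieces at each inductive step, and induction on the number of edges completes the proof. The main obstacle, as in $\S 3$, is really the verification that the level-by-level construction of Proposition \ref{free} still goes through in the $F_n$ setting; Theorem \ref{alternative2}, which reduces every relevant statement to one about honest proper homotopy equivalences between universal covers of $n$-dimensional $(n-1)$-aspherical complexes after wedging with a single $n$-sphere, is the key that makes this work.
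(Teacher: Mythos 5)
Your proposal is correct and follows essentially the same route as the paper, which proves Theorem~\ref{graph2} simply by observing that ``the same arguments used throughout $\S 3$ yield the result'': the quasi-isometry upgrades of Lemma~\ref{power} and Proposition~\ref{amalgam}, the level-by-level construction of Proposition~\ref{free} (with Theorem~\ref{alternative2} supplying the initial proper homotopy equivalence after wedging with $S^n$, and Lemma~\ref{Antonio} applying verbatim), and the induction on the number of edges via Stallings--Dunwoody. Your write-up in fact makes explicit some details the paper leaves implicit.
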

Observe that, in the context of Theorem \ref{graph2} above, the groups $G$ and $G'$ are indeed of type $F_n$ (see
\cite[$\S 7.2$, Ex. 3]{Geo}).

\section{The particular case of properly $3$-realizable groups}
\label{App2}

\indent A tower of groups $\underline{P}$ is a {\it telescopic
tower} if it is of the form
\[
\underline{P} = \{ P_0 \stackrel{p_1}{\lgaf} P_1 \stackrel{p_2}{\lgaf} P_2 \lgaf \cdots \}
\]
where $P_i = F(D_i)$ are free groups of basis $D_i$ such that
$D_{i-1} \sub D_i$, the differences $D_i - D_{i-1}$ are finite
(possibly empty), and the bonding homomorphisms $p_k$ are the
obvious projections. An infinite finitely presented group $G$ is
said to be of {\it telescopic type at each end} (resp. {\it at
infinity}, if $G$ is $1$-ended) if its fundamental pro-group at each
end (resp. at infinity) is pro-isomorphic (for any choice of base
ray) to a telescopic tower. Equivalently, $G$ is semistable and pro-free and pro-(finitely generated) at each
end (resp. at infinity), see \cite{L4} for instance. Observe that being of telescopic type at each end is an invariant under proper
$2$-equivalences.\\
\indent It is worth mentioning that
direct products and ascending HNN-extensions of infinite finitely
presented groups have been shown to be of telescopic type at
infinity \cite{CLR,L4}. In fact, any extension of an infinite
finitely presented group
by another infinite finitely presented group is of telescopic type at infinity \cite{CLQRoy}. One-relator groups are
also of telescopic type at each end \cite{CLQR2} (see also \cite{LasRoy}).\\
\indent We recall that a finitely presented group $G$ is {\it
properly $3$-realizable} (abbreviated to P3R) if for some finite $2$-dimensional
CW-complex $X$ with $\pi_1(X) \cong G$, the universal cover
$\widetilde{X}$ of $X$ has the proper homotopy type of a
$3$-manifold (with boundary). Notice that it follows from
\cite{CFLQ} that $\widetilde{X}$ always has the proper homotopy
type of a $4$-manifold. This concept was originally motivated by the Hopf conjecture on the freeness of the second cohomology group $H^2(G; {\Z}G)$ for any finitely presented group $G$; indeed, if $G$ is P3R then one can check that Lefschetz duality yields the freeness of the
cohomology group $H^2(G; {\Z}G)$. Being P3R does not depend on the
choice of $X$ after taking wedge with a single $2$-sphere $S^2$
\cite{ACLQ}. It has been proved that the class of P3R groups is closed under quasi-isometries \cite{CLQR_AMS} and
amalgamated products over
finite groups \cite{CLQR1}, and contains the class of all groups of
telescopic type at each end \cite{L4,LasRoy}. Moreover, it is
conjectured in \cite{FLR} that those two classes are the same (and
examples of non-P3R groups are given), and it has been proved so in \cite{FLR} under the qsf ({{\it quasi-simply filtered}) hypothesis, i.e, roughly speaking, that the corresponding universal cover admits an exhaustion that can be ``approximated" by simply connected complexes (see \cite{BM}). Next, we show that the qsf condition in \cite[Thm. 1.1]{FLR} may as well be
replaced with semistability at infinity in the $1$-ended case, and we count the number of proper $2$-equivalence classes
in this case.
\begin{theorem} \label{p3r} Let $G$ be a $1$-ended finitely presented P3R group. If $G$ is semistable at infinity, then
$G$ is of telescopic type at infinity; moreover, $G$ is proper $2$-equivalent to one (and only one) of the following
groups ${\Z} \times {\Z} \times {\Z}$, ${\Z} \times {\Z}$ or ${\F}_2 \times {\Z}$ (here, ${\F}_2$ is the free group on two generators).
\end{theorem}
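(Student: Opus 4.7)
The plan is as follows. Fix a finite $2$-complex $X$ with $\pi_{1}(X)\cong G$ whose universal cover $\widetilde{X}$ has the proper homotopy type of a $3$-manifold $M$ (possibly with boundary); then $M$ is simply connected, $1$-ended and semistable at infinity. I will first show $G$ is of telescopic type at infinity by a $3$-manifold analysis of the end of $M$, and then classify the resulting telescopic tower up to pro-isomorphism.

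For the telescopic type, I would adapt the argument of \cite[Thm.~1.1]{FLR}, substituting semistability at infinity for the qsf hypothesis used there. Take an exhaustion $M=\bigcup_{n}K_{n}$ by compact $3$-submanifolds with $2$-manifold boundary, and set $U_{n}=M\setminus\operatorname{int}K_{n}$. Semistability at infinity lets me assume the bonding maps in the tower
\[
\pi_{1}(U_{0})\stackrel{\phi_{1}}{\lgaf}\pi_{1}(U_{1})\stackrel{\phi_{2}}{\lgaf}\cdots
\]
are surjective, and Scott's compact core theorem ensures that each $\pi_{1}(U_{n})$ is finitely generated, so the tower is pro-(finitely generated). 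Using the Loop Theorem and Dehn's Lemma inductively to compress $\partial K_{n}$ along essential disks lying in $U_{n}$ --- the simple connectivity of $M$ supplies the required compressing disks, since the kernel of $\pi_{1}(\partial K_{n})\to\pi_{1}(U_{n})$ is normally generated by loops bounding in $M$ --- I arrange that every $\pi_{1}(U_{n})$ is free and that the $\phi_{k}$ coincide with the canonical projections of a telescopic tower, giving the first assertion.

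To count proper $2$-equivalence classes, let $r_{n}=\operatorname{rank}\pi_{1}(U_{n})$; surjectivity of the $\phi_{k}$ gives $r_{n}\le r_{n+1}$, so up to passing to a subsequence there are three cases: (a) $r_{n}=0$ eventually, (b) $r_{n}$ stabilises at some $r\ge 1$, or (c) $r_{n}\to\infty$. The crucial step is to show that (b) forces $r=1$. Indeed, if the fundamental pro-group were pro-isomorphic to the constant tower of $F_{r}$ with $r\ge 2$, then by a Siebenmann-type tameness argument in the semistable $3$-manifold setting the end of $M$ would be proper homotopy equivalent to $H_{r}\times[0,\infty)$ for a genus-$r$ handlebody $H_{r}$; combining this model with the simple connectivity of $M$ and the free cocompact action of the $1$-ended group $G$ yields a contradiction (for example via an analysis of $H^{2}(G;\Z G)$, or by noting that such an end would force a surface-group-like splitting of $G$ incompatible with being $1$-ended and P3R). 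The case $r=1$ does survive, realised already by $\R^{2}\times[0,1]$.

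Finally, by Proposition \ref{characterisation} proper $2$-equivalence among $1$-ended semistable finitely presented groups is detected by the fundamental pro-group. The three candidates realise the three cases: $\Z\times\Z\times\Z$ has universal cover $\R^{3}$, which is simply connected at infinity (case (a)); $\Z\times\Z$ has universal cover $\R^{2}$, with constant pro-$\pi_{1}\cong\Z$ (case (b) with $r=1$); and $\F_{2}\times\Z$ has universal cover proper homotopy equivalent to $T\times\R$ with $T$ the $4$-valent tree, and a direct Van Kampen computation on $T\times\R\setminus(T_{k}\times[-k,k])$ (where $T_{k}$ is the closed ball of radius $k$ in $T$) shows that its end pro-group is a telescopic tower of free groups whose ranks tend to infinity (case (c)). Since any two growing telescopic towers are pro-isomorphic, and the three pro-groups $1$, $\Z$, and the growing free tower are pairwise non-pro-isomorphic, matching and uniqueness both follow. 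The main obstacle in the whole argument is the rank rigidity step of the trichotomy --- ruling out constant pro-$\pi_{1}$ of rank $\ge 2$ --- which is where the finest interplay of $3$-manifold tameness with the cocompact $G$-action is needed.
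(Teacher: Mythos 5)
Your first half takes a genuinely different route from the paper's: instead of compressing frontier surfaces via the Loop Theorem in the style of \cite{FLR}, the paper first establishes an explicit topological model for $M$ (Theorem \ref{t1.1}: using Perelman's resolution of the Poincar\'e Conjecture together with Kakimizu and Brin--Thickstun, $M$ is properly homotopy equivalent to $\R^3$ minus a locally finite family of disjoint open $3$-balls and open half-spaces, i.e.\ a missing-boundary manifold $B^3-Z$), and then reads off $pro$-$\pi_1(M)$ directly from that model (Theorem \ref{t1bis}), obtaining a telescopic tower on $\mathfrak{P}(G)-1$ generators, where $\mathfrak{P}(G)$ counts the planes in $\partial M$. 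As written, your version leaves the pro-(finitely generated) claim unsupported --- Scott's compact core theorem applies to a $3$-manifold already known to have finitely generated fundamental group and does not show that $\pi_1(U_n)$ is finitely generated --- and it never uses irreducibility (hence the Poincar\'e Conjecture), which the paper needs in order to identify the capped-off manifold with $\R^3$.

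The genuine gap, however, sits exactly at the step you yourself flag as crucial: ruling out a constant tower $F_r$ with $r\ge 2$. Neither of your two suggested contradictions is an argument. The claim that such an end ``would force a surface-group-like splitting of $G$ incompatible with being $1$-ended and P3R'' cannot be right as stated, because the surviving case $r=1$ is realized precisely by surface groups, which are $1$-ended and P3R; a surface-like end is in no conflict with those hypotheses. The mechanism that actually closes the trichotomy is cohomological: one proves $\mathfrak{P}(G)=\operatorname{rank} H^2(G;\Z G)+1$ whenever $\mathfrak{P}(G)\neq 0$ (Proposition \ref{p4}) and then invokes Farrell's theorem that the rank of $H^2(G;\Z G)$ is $0$, $1$ or $\infty$ for a $1$-ended finitely presented group containing an element of infinite order (Corollary \ref{c2}); together with the exclusion of $\mathfrak{P}(G)=1$ in the semistable case (Remark \ref{values}(a)), this leaves only $\mathfrak{P}(G)\in\{0,2,\infty\}$, i.e.\ $r\in\{0,1\}$ or $r_n\to\infty$. (The case $r=1$ is moreover identified with the virtually-surface-group case by Bowditch's theorem, Theorem \ref{vsg}.) Without Farrell's theorem or an equivalent input your case (b) with $r\ge 2$ is not excluded, and the classification into exactly three classes does not follow. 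The remainder --- realizing the three towers by $\Z\times\Z\times\Z$, $\Z\times\Z$ and $\F_2\times\Z$, noting that all strictly growing telescopic towers are pro-isomorphic, and invoking Proposition \ref{characterisation} --- agrees with the paper and is fine.
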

Observe that the groups ${\Z} \times {\Z} \times {\Z}$, ${\Z} \times {\Z}$ and ${\F}_2 \times {\Z}$ are $1$-ended and semistable at infinity \cite{M,Mi};
moreover, they are $3$-manifold groups and hence (trivially) P3R groups. Thus, there are exactly three
proper $2$-equivalence classes of $1$-ended and semistable at infinity (finitely presented) groups which contain P3R
groups. It is worth mentioning that each of these proper $2$-equivalence classes contains non $3$-manifold groups as well, see Remark \ref{non3mfld} below.\\

\indent Theorem \ref{p3r} above together with \cite[Thm. 1.2]{CLQRoy}) yield the following
\begin{corollary} If $1 \rightarrow H \lga G \lga Q \rightarrow 1$ is a short exact sequence of infinite finitely presented groups, them $G$ is proper $2$-equivalent to one of the following groups ${\Z} \times {\Z} \times {\Z}$, ${\Z} \times {\Z}$ or ${\F}_2 \times {\Z}$.
\end{corollary}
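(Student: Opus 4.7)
The plan is to verify that $G$ itself satisfies the four hypotheses of Theorem \ref{p3r}, namely being finitely presented, $1$-ended, semistable at infinity, and properly $3$-realizable, and then invoke that theorem directly to obtain the classification.

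For the first three hypotheses, I would appeal to the remark following Proposition \ref{group-extension}: whenever the outer terms of a short exact sequence of groups are infinite and finitely presented, the middle term $G$ is itself finitely presented, $1$-ended and semistable at infinity (using \cite{M} and Proposition \ref{characterisation}). Thus no additional work is needed to secure these three properties; they propagate automatically from the hypothesis that $H$ and $Q$ are infinite and finitely presented.

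For the remaining hypothesis, that $G$ is P3R, I would use \cite[Thm. 1.2]{CLQRoy}, which asserts that any extension of an infinite finitely presented group by another infinite finitely presented group is of telescopic type at infinity (so its fundamental pro-group is pro-isomorphic to a telescopic tower of free groups of finite rank). Combined with the inclusion, recalled at the start of $\S \ref{App2}$, of the class of groups of telescopic type at each end into the class of P3R groups (via \cite{L4,LasRoy}), this yields that $G$ is P3R.

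Having verified all four hypotheses for $G$, Theorem \ref{p3r} applies and produces the desired classification: $G$ is proper $2$-equivalent to exactly one of ${\Z} \times {\Z} \times {\Z}$, ${\Z} \times {\Z}$ or ${\F}_2 \times {\Z}$. I do not expect any genuine obstacle, since the section has been structured so that this corollary is essentially a bookkeeping step; the only thing to watch is that one must apply \cite[Thm. 1.2]{CLQRoy} to $G$ itself (rather than just to the product $H \times Q$ produced by Proposition \ref{group-extension}), since proper $2$-equivalence is not \emph{a priori} known to preserve properly $3$-realizability and Theorem \ref{p3r} is stated as an intrinsic condition on $G$.
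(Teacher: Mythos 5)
Your proposal is correct and follows exactly the route the paper intends: the paper gives no written proof beyond the remark that Theorem \ref{p3r} together with \cite[Thm. 1.2]{CLQRoy} yield the corollary, and your argument simply fills in the implicit steps (telescopic type at infinity via \cite{CLQRoy}, hence P3R via \cite{L4,LasRoy}, with $1$-endedness and semistability from \cite{M}). Your closing caution about applying \cite[Thm. 1.2]{CLQRoy} to $G$ itself rather than to $H \times Q$ is well taken and consistent with how the paper uses that result.
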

We also have the following corollary in the infinite ended case.
\begin{corollary} \label{p3r2} Every finitely presented group of telescopic type at each end is the fundamental group of
a finite graph of groups whose edge groups are finite and whose vertex groups are either finite or else proper
$2$-equivalent to one of the following ($1$-ended) groups ${\Z} \times {\Z} \times {\Z}$, ${\Z} \times {\Z}$ or ${\F}_2 \times {\Z}$.
\end{corollary}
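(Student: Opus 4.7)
The plan is to reduce the statement to Theorem \ref{p3r} via the accessibility decomposition already invoked before Theorem \ref{graph}. If $G$ is $1$-ended, then being telescopic at infinity immediately forces $G$ to be semistable at infinity (by the very definition of telescopic type recalled at the start of this section) and properly $3$-realizable (since \cite{L4, LasRoy} establish that the P3R class contains all groups of telescopic type at each end). Hence Theorem \ref{p3r} applies directly to $G$, so $G$ is proper $2$-equivalent to one of ${\Z} \times {\Z} \times {\Z}$, ${\Z} \times {\Z}$ or ${\F}_2 \times {\Z}$, and the trivial one-vertex, no-edge graph of groups does the job.

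If $G$ has more than one end, I would invoke Stallings' structure theorem together with Dunwoody's accessibility theorem to realize $G$ as the fundamental group of a finite graph of groups $({\mathcal G}, \Gamma)$ with finite edge groups and finitely presented vertex groups having at most one end, exactly as in the discussion preceding Theorem \ref{graph}. The edge groups and the finite vertex groups already fit the required form, so the task reduces to showing that each $1$-ended vertex group $G_v$ is proper $2$-equivalent to one of the three listed groups. By the $1$-ended case just handled, this in turn reduces to proving that each such $G_v$ is itself of telescopic type at infinity, after which Theorem \ref{p3r} finishes the argument.

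The main obstacle is therefore the descent claim: telescopic type at each end passes from $G$ to the $1$-ended vertex groups of its accessibility decomposition. I would argue this via the Bass--Serre structure of the universal cover: if $X$ is a finite $2$-complex realizing $({\mathcal G}, \Gamma)$, then $\widetilde{X}$ is a tree-like arrangement of copies of the vertex covers $\widetilde{X_v}$ glued along universal covers of the edge spaces, whose fundamental groups are finite. Consequently the ends of $\widetilde{X}$ partition into ``tree-type'' ends (controlled by the Bass--Serre tree) and ``vertex-type'' ends, one family for each end of each $\widetilde{X_v}$. Moreover, because the finite edge groups contribute nothing at infinity, the fundamental pro-group of $\widetilde{X}$ at a vertex-type end is pro-isomorphic, up to cofinal reindexing, to that of $\widetilde{X_v}$ at the corresponding end. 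Since $G$ is telescopic at each end by hypothesis, every vertex-type end of $\widetilde{X}$ has telescopic fundamental pro-group, and hence so does $\widetilde{X_v}$; thus the $1$-ended vertex group $G_v$ is of telescopic type at infinity. Applying Theorem \ref{p3r} to each such $G_v$ then yields the desired proper $2$-equivalence class, completing the proof.
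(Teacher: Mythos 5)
Your proposal follows the paper's proof essentially verbatim: Stallings--Dunwoody accessibility gives the graph-of-groups decomposition with finite edge groups and vertex groups of at most one end, telescopic type descends to the $1$-ended vertex groups, and Theorem \ref{p3r} finishes. The only difference is that the paper outsources your ``descent claim'' to \cite[Lemma 3.2]{LasRoy} (and the proper $3$-realizability of telescopic-type groups to \cite[Thm. 1.2]{L4}) rather than re-deriving it from the Bass--Serre structure of the universal cover as you sketch.
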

\begin{proof} [Proof of Corollary \ref{p3r2}] Observe that any infinite ended finitely presented group $G$ can be
expressed as the fundamental group of a finite graph of groups $({\mathcal G}, \Gamma)$ whose edge groups are finite and
whose vertex groups are finitely presented groups with at most one end, by Stallings' structure theorem \cite{Sta} and
Dunwoody's accesibility theorem \cite{D}. By \cite[Lemma 3.2]{LasRoy}, if $G$ is of telescopic type at each end then
each of the $1$-ended vertex groups in $({\mathcal G}, \Gamma)$ is of telescopic type at infinity as well and hence a
$1$-ended and semistable at infinity P3R group, by \cite[Thm. 1.2]{L4}. The conclusion follows then from Theorem
\ref{p3r}.
\end{proof}

The next subsection is devoted to the proof of Theorem \ref{p3r} above making use of some non-compact $3$-manifold
theory, as well as to the introduction of a new numerical invariant ${\mathfrak P}(G)$ of such group $G$.
\subsection{The fundamental pro-group of $1$-ended and semistable at infinity P3R groups}
\label{App1}

We will use the usual three (co)homologies in proper homotopy theory; namely, ordinary (co)homology ($H_*, H^*$),
end (co)homology ($H^e_*, H_e^*$), cohomology with
compact supports ($H^*_c$) and homology of locally finite chains ($H^{lf}_*$). All coefficients
are in ${\Z}$. Besides the corresponding exact sequences for pairs for each of these
(co)homologies, the three of them  are related by the well-known exact sequences displayed in the
rows of the following diagram for any topological pairs $Z = (Z,Z_0)$ and $Y = (Y,Y_0)$, where
$Z_0 \sub Z$ and $Y_0 \sub Y$ are closed sets.
\begin{center}
\begin{equation}
\label{GD}
\xymatrix@R=15pt{ \ar [r] & H_{q+1}(Z) \ar@{.>}[d]_{\cong}^{D} \ar [r] & H^{lf}_{q+1}(Z)
\ar@{.>}[d]_{\cong}^{D} \ar [r]& H^e_q(Z) \ar@{.>}[d]_{\cong}^{D} \ar [r]& H_q(Z)
\ar@{.>}[d]_{\cong}^{D} \ar [r] &\\
\ar [r] & H_c^{n-q-1}(Y) \ar [r] & H^{n-q-1}(Y) \ar[r]& H_e^{n-q-1}(Y) \ar[r]
& H_c^{n-q}(Y)\ar [r] &\\
}
\end{equation}
\end{center}
Moreover, for any n-manifold $M$ the vertical arrows are duality isomorphisms if we choose either $Z = (M,\partial M)$ and $Y = (M,\emptyset)$
or $ Z= (M , \emptyset)$ and $Y = (M, \partial M)$. We refer to \cite{massey} and \cite{Geo}
for the details. See also \cite{Laitinen} for a similar approach.\\

\indent Given an arbitrary P3R group $G$, let ${\mathcal M}_G$ be the set of $3$-manifolds having the proper homotopy type
of the universal cover $\widetilde{X}$ for some finite $2$-dimensional CW-complex $X$ with $\pi_1(X) = G$. Any manifold $M \in {\mathcal M}_G$
will be said to be an {\it associated $3$-manifold} to the P3R group $G$.
In this subsection all P3R groups are $1$-ended so that $\mathcal M_G$ consists of $1$-ended simply connected $3$-manifolds with boundary.
Let us start with some properties derived from duality of those manifolds.
\begin{proposition}\label{p1} Let $M$ be a $1$-ended simply connected $3$-manifold with boundary. Then, the boundary $\partial M$
consists of a (locally finite) union of planes and spheres. In particular, $H^{lf}_1(\partial M) =0$. Moreover, $\pi_2(M) \cong H_2(\partial M)$.
\end{proposition}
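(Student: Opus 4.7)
The plan is to combine Poincaré--Lefschetz duality, the long exact sequence of the pair $(M,\partial M)$, the Hurewicz theorem, and the classification of surfaces. Since $M$ is simply connected it is orientable, so the duality part of diagram (\ref{GD}) is available.

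First I would establish $H_2(M,\partial M)=0$. Poincaré--Lefschetz duality for the oriented $3$-manifold with boundary gives $H_2(M,\partial M)\cong H^1_c(M)$, and from the bottom row of (\ref{GD}), applied with $Y=(M,\emptyset)$, $n=3$ and $q=2$, we have the exact sequence
\[
H^0_c(M)\lga H^0(M)\lga H_e^0(M)\lga H^1_c(M)\lga H^1(M).
\]
Since $M$ is non-compact and connected, $H^0_c(M)=0$; since $\pi_1(M)=1$, $H^1(M)=0$; and since $M$ is $1$-ended, $H_e^0(M)\cong\Z$ with the natural map $H^0(M)\to H_e^0(M)$ an isomorphism. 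Hence $H^1_c(M)=0$, so $H_2(M,\partial M)=0$.

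Second, the homology long exact sequence of the pair (together with $H_3(M,\partial M)\cong H^0_c(M)=0$ and $H_1(M)=0$) collapses to
\[
0\lga H_2(\partial M)\lga H_2(M)\lga 0\lga H_1(\partial M)\lga 0,
\]
so $H_1(\partial M)=0$ and $H_2(\partial M)\cong H_2(M)$. Hurewicz gives $\pi_2(M)\cong H_2(M)$, and combining the two yields $\pi_2(M)\cong H_2(\partial M)$.

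Third, each component $F$ of $\partial M$ is a connected surface without boundary with $H_1(F)=0$. The classification of $2$-manifolds forces $F\cong S^2$ in the closed case and $F\cong\R^2$ in the non-compact case, since any other surface has positive genus or additional ends, both contributing to $H_1$. The family of components is locally finite because $\partial M$ is a closed subset of the locally compact manifold $M$. Finally, $H_1^{lf}(\partial M)=0$ follows component-wise from $H_1^{lf}(S^2)=H_1(S^2)=0$ and $H_1^{lf}(\R^2)\cong H^1(\R^2)=0$ by Poincaré duality on each component. The main obstacle is the bookkeeping in the first step: correctly invoking duality for a non-compact $3$-manifold with boundary and extracting $H^1_c(M)=0$ from $1$-endedness via the appropriate row of (\ref{GD}); once $H_2(M,\partial M)=0$ is in hand the rest is a clean assembly of standard tools.
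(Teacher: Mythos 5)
Your proof is correct and follows essentially the same route as the paper: both arguments extract $H^1_c(M)=0$ from $1$-endedness and simple connectivity, use Lefschetz duality to kill $H_2(M,\partial M)$ (and $H_3(M,\partial M)$), and then invoke the classification of surfaces to identify the boundary components. The only cosmetic difference is that you constrain the components via $H_1(\partial M)=0$ from the homology sequence of the pair, whereas the paper uses the $H^*_c$-sequence to get $H^1_c(\partial M)=0$; on the (orientable) boundary surfaces these are dual formulations of the same step.
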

\begin{proof} The $H^*_c$-exact sequence of the pair $(M,\partial M)$ and duality show that
\[
0 = \limdi H^1(M,U_j) \cong H^1_c(M) \lga H^1_c(\partial M)
\lga H^2_c(M,\partial M) \cong H_1(M)=0
\]
\noindent is exact, whence $H^1_c(\partial M)$ vanishes. Therefore, $H^1_c(W)=0$ for each component $W \sub \partial M$, and the
classification of the open surfaces in \cite{R} shows that $W$ is either $S^2$ or ${\R}^2$, and hence
$H^{lf}_1(\partial M) = 0$. Here, $\{U_j\}$ is any system of $\infty$-neighborhoods in $M$, see \cite{Geo} or \cite{massey}.\\
\indent Since $G$ is $1$-ended, we have $0=H^1_c(M)$ which, by duality, implies that $H_2(M,\partial M)=0$. Thus, the inclusion $\partial M \subseteq M$ induces isomorphisms $H_2(\partial M)\cong H_2(M)\cong \pi_2(M)$.
\end{proof}
According to Proposition \ref{p1}, if $X$ is some finite $2$-dimensional CW-complex with $\pi_1(X)\cong G$ so that $\widetilde{X}$ has the proper homotopy type of $M \in \mathcal M_G$, then $\pi_2(X)$ is carried by $\partial M$. Furthermore, if $G$ is infinite, then $\pi_2(M)$ is either infinitely generated or trivial. Indeed, any non-trivial $2$-cycle would be translated by the $G$-action on $\widetilde{X}$ to infinitely many other non-trivial $2$-cycles in $H_2(\widetilde{X})$. Thus, if $\pi_2(M)(\cong\pi_2(X))$ is not trivial, then it must be infinitely generated, yielding then the following result.
\begin{proposition} \label{infinitepi2}
If $M$ is a $1$-ended simply connected $3$-manifold with boundary and $\pi_2(M)\neq 0$, then $\pi_2(M)$ is an infinitely generated free abelian group.
\end{proposition}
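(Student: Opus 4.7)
The plan is to combine Proposition \ref{p1} with an algebraic argument using the free cell-permuting $G$-action on the universal cover of an appropriate finite $2$-complex. By Proposition \ref{p1}, $\pi_2(M) \cong H_2(M) \cong H_2(\partial M)$, and since $\partial M$ is a locally finite disjoint union of planes and spheres, $H_2(\partial M)$ is the free abelian group on the set of sphere components of $\partial M$; this immediately yields the ``free abelian'' part of the conclusion.

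Next, reading $M$ in the context of this subsection (i.e., $M \in {\mathcal M}_G$ for some $1$-ended, hence infinite, finitely presented P3R group $G$), there is a finite $2$-dimensional CW-complex $X$ with $\pi_1(X) \cong G$ and a proper homotopy equivalence $\widetilde{X} \simeq M$, yielding $H_2(\widetilde{X}) \cong H_2(M)$. Because $\widetilde{X}$ is $2$-dimensional, we have $H_2(\widetilde{X}) = Z_2(\widetilde{X})$, a subgroup of the cellular chain group $C_2(\widetilde{X}) \cong ({\Z}G)^m$, which is a free ${\Z}G$-module of finite rank $m = \#\{2\text{-cells of }X\}$ and a free abelian group on the infinite set of upstairs $2$-cells.

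The core step is then as follows. Assume $\pi_2(M) \neq 0$ and pick a non-zero cycle $\sigma \in Z_2(\widetilde{X})$, whose support $T$ is a finite set of upstairs $2$-cells. Since the cell-permuting $G$-action is free, the set $\{g \in G : gT \cap T \neq \emptyset\}$ is finite (properness of the action on cells), so one can inductively extract a sequence $g_1, g_2, \ldots \in G$ such that the translates $g_n T$ are pairwise disjoint. The cycles $g_n \sigma$ then have pairwise disjoint supports in $C_2(\widetilde{X})$, hence are ${\Z}$-linearly independent in $Z_2(\widetilde{X})$, showing that $H_2(\widetilde{X}) \cong \pi_2(M)$ has infinite ${\Z}$-rank.

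The main technical point to verify carefully is the inductive selection of the $g_n$'s, which relies on the freeness of the $G$-action on cells together with $G$ being infinite; the $2$-dimensionality of $\widetilde{X}$ is equally crucial, as it forces $H_2 = Z_2$ and so prevents any boundary relations from collapsing the distinct $G$-translated cycles back together in homology, which is what converts the topological statement cleanly into a disjoint-support argument.
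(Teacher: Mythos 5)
Your proposal is correct and follows essentially the same route as the paper: Proposition \ref{p1} gives that $\pi_2(M)\cong H_2(\partial M)$ is free abelian (on the sphere components), and the infinite generation comes from translating a non-trivial $2$-cycle by the infinite group $G$ acting on $\widetilde{X}$, exactly as in the paragraph preceding the proposition (where, as you rightly note, $M\in\mathcal{M}_G$ is implicitly assumed). Your disjoint-support argument, using $H_2(\widetilde{X})=Z_2(\widetilde{X})$ for the $2$-complex $\widetilde{X}$, just makes precise the paper's assertion that the translates give infinitely many independent classes.
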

The cardinal number ${\mathfrak p}(M) = \# \mbox{planes in } \partial M$ can be easily estimated by the end (co)homology groups. In fact, the existence of planes in $\partial M$ of a $1$-ended simply connected $3$-manifold $M$ is determined by the following proposition.
\begin{proposition}\label{p2} Let $M$ be a $1$-ended simply connected $3$-manifold. The following statements are equivalent:
\begin{enumerate}
\item[(a)] $\partial M$ contains at least one plane.
\item[(b)] $H^e_0(\partial M) \stackrel{duality}{\cong} H^1_e(\partial M) \neq 0$ (moreover, ${\mathfrak
			p}(M) = rank H^e_0(\partial M)$).
\item[(c)]  The homomorphism $j_*: {\Z} \cong H^e_0(M) \lga H_0^e(M,\partial M)$ is trivial.
\end{enumerate}
\end{proposition}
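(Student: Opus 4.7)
The plan is to establish the chain (a) $\Leftrightarrow$ (b) $\Leftrightarrow$ (c) by combining Proposition \ref{p1} with the long exact sequences relating $H_*$, $H^{lf}_*$ and $H^e_*$ from diagram (\ref{GD}), first applied to $\partial M$ itself (for (a) $\Leftrightarrow$ (b)) and then to the pair $(M,\partial M)$ (for (b) $\Leftrightarrow$ (c)).

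For (a) $\Leftrightarrow$ (b), Proposition \ref{p1} lets me decompose $\partial M$ as a locally finite disjoint union of $2$-spheres $S_i$ and planes $P_j$ (all orientable, since $M$ is simply connected and hence orientable, so $\partial M$ is as well). Using the piece
\[
H^{lf}_1(\partial M) \lga H^e_0(\partial M) \lga H_0(\partial M) \lga H^{lf}_0(\partial M)
\]
of the exact sequence from diagram (\ref{GD}), the vanishing $H^{lf}_1(\partial M)=0$ from Proposition \ref{p1} makes $H^e_0(\partial M)$ embed as the kernel of the second map. Since $H^{lf}_0(S_i)={\Z}$ (a compact component, on which the map is the identity) while $H^{lf}_0(P_j)=0$ (a connected noncompact top-dimensional manifold), that kernel is free abelian of rank equal to the number of plane components, giving ${\mathfrak p}(M) = \mathrm{rank}\, H^e_0(\partial M)$. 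The isomorphism $H^e_0(\partial M) \cong H^1_e(\partial M)$ then follows from the $n=2$ instance of the vertical duality in diagram (\ref{GD}) applied to the orientable boundaryless $2$-manifold $\partial M$, and (a) $\Leftrightarrow$ (b) is immediate.

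For (b) $\Leftrightarrow$ (c), I would run the end-homology long exact sequence of the pair $(M,\partial M)$:
\[
\cdots \lga H^e_0(\partial M) \stackrel{i_*}{\lga} H^e_0(M) \stackrel{j_*}{\lga} H^e_0(M,\partial M) \lga \cdots
\]
Since $M$ is $1$-ended, $H^e_0(M)\cong {\Z}$. Each plane $P_j \sub \partial M$ has a single end, which under the inclusion $\partial M \hookrightarrow M$ must represent the unique end of $M$; so $i_*$ sends the generator of the ${\Z}$-summand associated to each plane to a generator of $H^e_0(M)$. Hence if ${\mathfrak p}(M)\geq 1$ then $i_*$ is surjective and exactness forces $j_*=0$; conversely, if ${\mathfrak p}(M)=0$ then $H^e_0(\partial M)=0$ by the previous paragraph, so $i_*=0$ and $j_*$ is injective by exactness, hence nontrivial since $H^e_0(M)={\Z}\neq 0$.

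The only technical delicacy I foresee is the bookkeeping for a possibly infinite-component $\partial M$: one must verify that diagram (\ref{GD}) and the componentwise Poincar\'e duality behave correctly as a direct sum over components in $H_*/H^e_*$ versus a direct product in $H^{lf}_*/H^*_e$, so that the local calculations on $\R^2$ and $S^2$ really assemble into the claimed global identifications of $H^e_0(\partial M)$ and $H^1_e(\partial M)$. Once this is granted, the argument above is straightforward.
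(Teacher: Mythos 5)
Your proof is correct and follows essentially the same route as the paper: both reduce (a)$\Leftrightarrow$(b) to the computation $H^e_0(\partial M)\cong\bigoplus_{\text{planes}}\Z$ via Proposition \ref{p1}, and both prove the equivalence with (c) from the end-homology sequence of $(M,\partial M)$, using that a ray in a plane component reaches the unique end of $M$ to make $i_*$ onto. The only (minor) difference is that you extract $H^e_0(\partial M)$ from the $H_*/H^{lf}_*/H^e_*$ exact sequence of diagram (\ref{GD}) applied to $\partial M$, whereas the paper uses the Milnor $\liminv$--$\liminv^1$ sequence; both hinge on the same input from Proposition \ref{p1}.
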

\begin{proof} The Milnor exact sequence
\[
0 \lga {\liminv}^1 pro-H_1(\partial M) \lga H_0^e(\partial M) \lga \liminv pro-H_0(\partial M) \lga 0
\]
\noindent and Proposition \ref{p1} yield an isomorphism
\[
\displaystyle H^e_0(\partial M) \cong \liminv pro-H_0(\partial M) \cong \bigoplus_{i= 1} ^{m} {\Z}
\]
\noindent where $m$ is number of planes in $\partial M$ (possibly $m = \infty$). This shows (a) $\Longleftrightarrow$ (b) as well as the equality
${\mathfrak p}(M) = rank \ H^e_0(\partial M)$.\\
\indent On the other hand, the exact sequence corresponding to the upper row in (\ref{GD}) together with
duality and \cite[Prop 11.1.3]{Geo} and \cite[Prop 12.1.2]{Geo} yield the exact sequence
\[
0 = H_1^{lf}(M) \lga H_0^e(M) \lga H_0(M) \cong {\Z} \lga H_0^{lf}(M) = 0
\]
\noindent whence $H^e_0 (M) \cong {\Z}$, and hence the exact $H^e_*$-sequence of $(M,\partial M)$ leads to the exact sequence
\[
H^e_0(\partial M) \stackrel{i_*}{\lga} H^e_0 (M)  \cong {\Z} \stackrel{j_*}{\lga} H^e_0(M,\partial M) \lga H^e_{-1}(\partial M) \lga 0 = H^e_{-1}(M)
\]
Therefore, if $\partial M$ contains at least one plane, then the end of $M$ can be reached by a ray in
$\partial M$ and so $i_*$ above is onto, whence $j_*$ is trivial. Conversely, the annihilation of
$j_*$ implies that $H^e_0(\partial M) \neq 0$ and hence $\partial M$ contains
at least one plane as proved above. This shows (a) $\Longleftrightarrow$ (c).
\end{proof}
As a consequence of Proposition \ref{p2}, we get the following sufficient condition for the
existence of planes in $\partial M$.
\begin{proposition}\label{p3} The end (co)homology groups $H^1_e(M) \cong H^e_1(M,\partial M)$ are isomorphic free abelian groups. Moreover, if ${\mathfrak p}(M) \neq 0$ then ${\mathfrak p}(M) = 1+ rank \ H^1_e(M)$, and ${\mathfrak p}(M) \geq 2$ if and only if $ H^1_e(M)\neq 0$.
\end{proposition}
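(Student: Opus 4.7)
The isomorphism $H^1_e(M) \cong H^e_1(M,\partial M)$ is a direct instance of the vertical duality isomorphism in diagram \eqref{GD} with $Z=(M,\partial M)$, $Y=M$, $n=3$ and $q=1$, so that step is essentially free. To prove freeness, my plan is to feed $H^1_e(M)$ into the forget-supports exact sequence
\[
H^1_c(M) \lga H^1(M) \lga H^1_e(M) \lga H^2_c(M) \lga H^2(M).
\]
From the proof of Proposition \ref{p2} we already have $H^1_c(M)=0$; universal coefficients together with $H_1(M)=0$ give $H^1(M)=0$; and Lefschetz duality identifies $H^2_c(M) \cong H_1(M,\partial M)$. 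The pair sequence, using $H_1(\partial M)=0$ (because every component of $\partial M$ is a plane or a sphere by Proposition \ref{p1}) and $H_1(M)=0$, presents $H_1(M,\partial M)$ as $\ker\!\bigl(H_0(\partial M)\to H_0(M)\bigr)$, a subgroup of $\bigoplus_c \Z$ and so free abelian. Hence $H^1_e(M)$ embeds in a free abelian group and is itself free abelian.

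For the rank, I want to pin down the image of the embedding $H^1_e(M) \hookrightarrow H_1(M,\partial M)$. Under Lefschetz duality the forget-supports map $H^2_c(M) \to H^2(M)$ corresponds to the natural comparison $H_1(M,\partial M) \to H^{lf}_1(M,\partial M)$. I would first show that $H^{lf}_1(M)=0$: Lefschetz duality gives $H^{lf}_1(M) \cong H^2(M,\partial M)$, and the pair sequence in ordinary cohomology, using $H^1(\partial M)=0$ and $H^3(M,\partial M) \cong H^{lf}_0(M)=0$, identifies $H^2(M,\partial M)$ with $\ker\!\bigl(H^2(M)\to H^2(\partial M)\bigr)$. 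Universal coefficients together with the isomorphism $H_2(\partial M) \cong H_2(M)$ of Proposition \ref{p1} make this restriction map an isomorphism, so $H^{lf}_1(M)=0$. The pair LES in locally finite homology then yields $H^{lf}_1(M,\partial M) \cong H^{lf}_0(\partial M) \cong \prod_s \Z$, the product running over the sphere components of $\partial M$ (planes contribute nothing since $H^{lf}_0(\R^2)=0$).

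Splitting $\partial M = P \sqcup S$ into planes and spheres, a class in $H_1(M,\partial M) \subset \bigoplus_p \Z \oplus \bigoplus_s \Z$ has the form $(n_p,n_s)$ with $\sum n_p + \sum n_s = 0$, and its image in $H^{lf}_1(M,\partial M) = \prod_s \Z$ is simply $(n_s)$. The kernel of the comparison is therefore $\{(n_p) : \sum n_p = 0\}$, a free abelian group of rank $\mathfrak{p}(M)-1$ whenever $\mathfrak{p}(M)\geq 1$, yielding $\mathfrak{p}(M) = 1 + \mathrm{rank}\, H^1_e(M)$; the equivalence $H^1_e(M)\neq 0 \Leftrightarrow \mathfrak{p}(M) \geq 2$ then follows at once. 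The most delicate point is ensuring that the forget-supports map is compatible with the various Lefschetz duality isomorphisms (all given by cap with the fundamental class $[M] \in H^{lf}_3(M,\partial M)$), together with careful bookkeeping between direct sums and direct products when $\partial M$ has infinitely many components.
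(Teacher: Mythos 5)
Your proof is correct, and while your freeness argument is essentially the paper's own in dual form --- both embed the relevant group into a subgroup of the free abelian group $H_0(\partial M)$, you via $H^1_e(M)\hookrightarrow H^2_c(M)\cong H_1(M,\partial M)\hookrightarrow H_0(\partial M)$ and the paper via the (dual) composite $H^e_1(M,\partial M)\hookrightarrow H_1(M,\partial M)\hookrightarrow H_0(\partial M)$ --- your computation of the rank takes a genuinely different route. The paper stays in end homology: it uses the $H^e_*$-sequence of the pair $(M,\partial M)$, shows that $k_*:H^e_1(M)\to H^e_1(M,\partial M)$ is trivial (because it is preceded by the surjection $H^{lf}_2(M)\to H^e_1(M)$ and factors through $H^{lf}_2(M,\partial M)=0$) and that $j_*$ is trivial when planes are present (Proposition \ref{p2}), and then reads off the split short exact sequence $0\to H^e_1(M,\partial M)\to H^e_0(\partial M)\to\Z\to 0$ together with the already-established identity ${\mathfrak p}(M)=\mbox{rank } H^e_0(\partial M)$. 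You instead realize $H^1_e(M)$ as $\ker\bigl(H_1(M,\partial M)\to H^{lf}_1(M,\partial M)\bigr)$ and compute both groups explicitly as an augmentation kernel in $\bigoplus_c\Z$ and a product $\prod_s\Z$ over the sphere components. Your route requires the extra verifications that $H^{lf}_1(M)=0$ (which you correctly deduce from $H_2(\partial M)\cong H_2(M)$ and universal coefficients) and that the ordinary-to-locally-finite comparison is compatible with the boundary maps of the two pair sequences; both check out. What it buys is a sharper conclusion: the explicit description $H^1_e(M)\cong\{(n_p)\in\bigoplus_{\rm planes}\Z \ : \ \sum n_p=0\}$, from which the rank formula, the equivalence with ${\mathfrak p}(M)\geq 2$, and the fact that $H^1_e(M)=0$ only forces ${\mathfrak p}(M)\leq 1$ (the content of Remark \ref{r0}(a)) are all immediate.
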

\begin{proof} Recall the duality isomorphism $H^1_e(M) \cong H^e_1(M,\partial M)$ from (\ref{GD}). From the exactness of the sequences
\[
0=H_1(M) \lga H_1(M,\partial M) \lga H_0(\partial M)
\]
\[
0 \stackrel{(1)}{=} H^{lf}_{2}(M,\partial M) \lga H^{e}_{1}(M, \partial M) \lga H_1(M,\partial M) \lga \cdots
\]
where (1) follows from duality (see \ref{GD}), we obtain that $H_1^e(M,\partial M)$ is isomorphic to a subgroup of the free abelian group $H_0(\partial M)$. Moreover, we have a commutative diagram
\[
\xymatrix@R=10pt@C=10pt{
& H^{lf}_{2}(M)\ar[d]^{(2)}\ar[r] & H^{lf}_{2}(M,\partial M) \stackrel{(1)}{=}0 \ar[d] & & &&\\
& H^{e}_1(M) \ar [r]^{k_*} & H^{e}_1(M,\partial M) \ar [r] & H^{e}_0(\partial M) \ar[r ]& H^{e}_0(M) \stackrel{(3)}{\cong} {\Z} \ar [r]^{j_*} & H^e_0(M,\partial M)
}
\]
\noindent where (3) was observed in the proof of Proposition \ref{p2}, and the homomorphism (2) is onto as it is followed by the zero homomorphism $H_1^e(M) \lga H_1(M)=0$, and so $k_*$
is the trivial homomorphism.\\
\indent Moreover, if $\partial M$ contains planes then $j_* = 0$ by Proposition \ref{p2} and the exactness of the bottom sequence yields
$H^e_0(\partial M) \cong H^e_1(M,\partial M) \oplus {\Z}$. Therefore,
\[
{\mathfrak p}(M) = rank \ H^e_0(\partial M) = rank \ H^e_1(M,\partial M) + 1 = rank \ H^1_e(M) +1
\]
Here we use again Proposition \ref{p2}. If $H^1_e(M) \cong H^e_1(M,\partial M) \neq 0$ then
$H^e_0(\partial  M) \neq 0$; i.e, $\partial M$ contains planes and, by the equality above, ${\mathfrak p}(M) \geq 2$.
\end{proof}
As a corollary, we have the following result.
\begin{corollary} \label{c1} If $M$ and $N$ are two proper homotopy equivalent $1$-ended simply connected $3$-manifolds and ${\mathfrak p}(N)\geq 2$, then ${\mathfrak
		p}(M) = {\mathfrak p}(N)$.
\end{corollary}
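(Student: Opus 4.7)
The proof reduces to combining Proposition \ref{p3} with proper homotopy invariance of end cohomology, so the plan is essentially bookkeeping. The key observation is that $H^1_e$ is a proper homotopy invariant of locally finite CW-complexes (cf.\ \cite[\S 12]{Geo}), so a proper homotopy equivalence $M \simeq N$ induces an isomorphism $H^1_e(M) \cong H^1_e(N)$.

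First I would use the hypothesis ${\mathfrak p}(N) \geq 2$. By the second assertion in Proposition \ref{p3}, this is equivalent to $H^1_e(N) \neq 0$. Transporting this non-vanishing across the proper homotopy equivalence gives $H^1_e(M) \neq 0$, and therefore, again by Proposition \ref{p3}, ${\mathfrak p}(M) \geq 2$ (in particular, ${\mathfrak p}(M) \neq 0$, so the rank formula is available for $M$ as well).

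Next I would apply the rank formula from Proposition \ref{p3} to both manifolds, using that both have non-empty collections of planes in their boundaries:
\[
{\mathfrak p}(M) \;=\; 1 + \mathrm{rank}\, H^1_e(M) \;=\; 1 + \mathrm{rank}\, H^1_e(N) \;=\; {\mathfrak p}(N),
\]
where the middle equality is the induced isomorphism on $H^1_e$.

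I do not expect a genuine obstacle here; the content is really Proposition \ref{p3} together with the functoriality of $H^1_e$ under proper maps. The only subtlety worth double-checking is that Proposition \ref{p3} is stated for $1$-ended simply connected $3$-manifolds (which both $M$ and $N$ are by hypothesis), so the formula does apply to each side, and that the hypothesis ${\mathfrak p}(N)\geq 2$ is indeed necessary to guarantee ${\mathfrak p}(M)\neq 0$ before invoking the rank formula on $M$ (without it, one could only conclude ${\mathfrak p}(N) \in \{0,1\}$ from $H^1_e(N)=0$, and the two values would not be distinguished by $H^1_e$).
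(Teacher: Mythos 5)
Your argument is exactly the one the paper intends (the corollary is stated immediately after Proposition \ref{p3} with no separate proof): the hypothesis ${\mathfrak p}(N)\geq 2$ forces $H^1_e(N)\neq 0$, proper homotopy invariance of $H^1_e$ transfers this to $M$, and the rank formula of Proposition \ref{p3} then applies on both sides. Correct and complete.
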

\begin{remark} \label{r0}
\begin{enumerate}[(a)]
\item Notice that the vanishing of $H^1_e(M) \cong H^e_1(M,\partial M)$ only implies
${\mathfrak p}(M) \leq 1$ but not necessarily the absence of planes in $\partial M$.
\item Corollary \ref{c1} may fail for $\mathfrak p (N) \leq 1$. Indeed, by regarding ${\R}^3_+$ as the infinite cylinder $X = B^2 \times {\R}_+$, consider a sequence $\{D_n\}_{n \geq 0}$ of $3$-balls with $D_n \subset int (B^2 \times [n,n+1])$. Then $\displaystyle N = {\R}^3_+ - (\bigcup_{n=0}^\infty D_n)$ admits the subspace $\displaystyle X^2 = (S^1 \times {\R}_+) \cup (\bigcup_{n= 0}^\infty B^2 \times \{n\})$ as a proper strong deformation retract. Notice that $X^2$ has the proper homotopy type of the spherical object $S^2_{{\R}_+}$ obtained by attaching one $2$-sphere at each vertex $t \in {\N} \cup \{0\} \subseteq {\R}_+$. Similarly, we write ${\R}^3 = B^3 \cup S^2 \times [1,\infty)$, where $B^3$ is the closed unit ball. Moreover, we consider $S^2$ as the attaching of a $2$-cell at a point $\{x_0\}$, so that each cylinder $S^2 \times [n,n+1]$ turns out to be the attaching of a $3$-cell $D_n$ at $(\{x_0\} \times [n,n+1]) \cup (S^2 \times \{n,n+1\})$. By choosing, for each $n \geq 1$, a $3$-ball $E_n \subset int(D_n)$ and $E_0 \subset int(B^3)$, it is clear that $\displaystyle M = {\R}^3- (\bigcup_{n=0}^\infty E_n)$ properly deforms onto
$\displaystyle (\{x_0\} \times [1,\infty)) \cup (\bigcup_{n=1}^\infty S^2 \times \{n\})$, which is proper homotopy equivalent to the spherical object $S^2_{{\R}_+}$ above. Hence, $M$ and $N$ have the same proper homotopy type but $\mathfrak p(M)=0$ and $\mathfrak p(N)=1$. Observe that this construction can be carried out in any dimension and for any (locally finite) countably infinite collection of top-dimensional balls.
\end{enumerate}
\end{remark}
Given any $1$-ended P3R group $G$ (we do not assume semistability) and two $2$-dimensional CW-complexes $X$ and $Y$ with $\pi_1(X)\cong G \cong \pi_1(Y)$, the universal covers $\widetilde{X \vee S^2}$ and $\widetilde{Y \vee S^2}$ are proper homotopy equivalent by \cite[Thm. 1.1]{CLQR_AMS}. Moreover, by the proof of \cite[Prop. 1.3]{ACLQ}, if $\widetilde{X}$ has the proper homotopy type of a $3$-manifold $M$ then $\widetilde{X \vee S^2}$ has the proper homotopy type of a $3$-manifold $M'$ with $\mathfrak p(M)= \mathfrak p(M')$. Thus, if $\mathfrak p(M) \geq 2$ then for any other $3$-manifold $N \in \mathcal M_G$ we have $\mathfrak p(M)=\mathfrak p(M')=\mathfrak p(N')=\mathfrak p(N)$, by Corollary \ref{c1}. However, as Remark \ref{r0}(b) points out, this is not true if $\mathfrak p(M)\leq 1$. Notice that the complement of a discrete countable family of open $3$-balls in ${\R}^3$ has the proper homotopy type of the universal cover of the $2$-skeleton of an irreducible closed $3$-manifold.\\
\indent Notwithstanding, if we set
\begin{equation} \label{dp}
\mathfrak P(G)=min\{\mathfrak p(M); M \in \mathcal M_G\}
\end{equation}
\noindent we get a numerical invariant of the $1$-ended P3R group $G$. This number will be called the $\partial$-number of $G$. In case ${\mathfrak P}(G) \neq 0$, this number is determined by the cohomology of $G$ as follows (cf. \cite{L0}).
\begin{proposition} \label{p4} The second cohomology $H^2(G;{\Z}G)$ of any $1$-ended P3R group $G$ is a free abelian group. Moreover, if ${\mathfrak P}(G) \neq 0$ then
\begin{equation} \label{nplanos}
{\mathfrak P}(G) = \mbox{rank } \ H^2(G;{\Z}G) +1
\end{equation}
\end{proposition}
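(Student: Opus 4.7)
The plan is to combine the end-cohomology interpretation of $H^2(G;\mathbb{Z}G)$ used throughout the paper with the duality-based computations already carried out in Propositions \ref{p1} and \ref{p3}.

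First, I would fix any finite $2$-dimensional CW-complex $X$ with $\pi_1(X)\cong G$ and $\widetilde{X}$ proper homotopy equivalent to a $3$-manifold $M\in\mathcal{M}_G$; such an $X$ (and $M$) exists by the definition of P3R. Since $G$ is $1$-ended and finitely presented, the identification $H^2(G;\mathbb{Z}G)\cong H^1_e(\widetilde{X};\mathbb{Z})$ recalled in $\S 1$ applies, and end cohomology is a proper homotopy invariant, so
\[
H^2(G;\mathbb{Z}G)\;\cong\;H^1_e(\widetilde{X};\mathbb{Z})\;\cong\;H^1_e(M;\mathbb{Z}).
\]
By Proposition \ref{p3}, the group $H^1_e(M)$ is isomorphic (via duality) to $H^e_1(M,\partial M)$, which was shown there to embed into the free abelian group $H_0(\partial M)$. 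This already gives the freeness of $H^2(G;\mathbb{Z}G)$.

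For the formula \eqref{nplanos}, assume $\mathfrak{P}(G)\neq 0$ and choose $M\in\mathcal{M}_G$ realizing the minimum, so that $\mathfrak{p}(M)=\mathfrak{P}(G)\geq 1$. Because $\partial M$ now contains at least one plane, the final clause of Proposition \ref{p3} applies and gives
\[
\mathfrak{P}(G)\;=\;\mathfrak{p}(M)\;=\;1+\operatorname{rank} H^1_e(M)\;=\;1+\operatorname{rank} H^2(G;\mathbb{Z}G),
\]
where the last equality uses the isomorphism established in the previous paragraph. Note that the right-hand side is independent of the choice of $M$, which is consistent (and in fact forces the minimum $\mathfrak{P}(G)$ to be attained by any $M\in\mathcal{M}_G$ with $\mathfrak{p}(M)\geq 1$; the reason one cannot simply write $\mathfrak{p}(M)$ for arbitrary $M$ is precisely Remark \ref{r0}(b), which is why the $\partial$-number is defined as a minimum).

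There is no real obstacle here beyond bookkeeping; the only subtle point is justifying the passage from $\widetilde{X}$ to $M$ at the level of end cohomology, which is immediate from proper homotopy invariance of $H^1_e$, and the observation that although $\mathfrak{p}$ is not a proper homotopy invariant when it is $0$ or $1$, the hypothesis $\mathfrak{P}(G)\neq 0$ guarantees we may choose a representative $M$ to which Proposition \ref{p3} applies.
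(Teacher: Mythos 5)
Your proof is correct and follows essentially the same route as the paper: identify $H^2(G;\Z G)\cong H^1_e(\widetilde{X})\cong H^1_e(M)$ and then apply Proposition \ref{p4}'s precursor, Proposition \ref{p3}, to a manifold $M$ realizing $\mathfrak{p}(M)=\mathfrak{P}(G)$. The only cosmetic difference is that the paper quotes \cite{L0} for the freeness of $H^2(G;\Z G)$, while you extract it from the embedding of $H^e_1(M,\partial M)$ into $H_0(\partial M)$ already established in Proposition \ref{p3}; both are valid, and your handling of the minimum in the definition of $\mathfrak{P}(G)$ is careful and correct.
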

\begin{proof} Let $X$ be a finite $2$-dimensional CW-complex with $\pi_1(X)\cong G$ and so that $\widetilde{X}$ has the proper homotopy type of some $M \in {\mathcal M}_G$. We have isomorphisms
$H^1_e(M) \cong H^{1}_e(\widetilde{X}) \cong H^2(G;{\Z}G)$, according to \cite[Cor 13.2.9]{Geo} and \cite[Cor. 13.2.13]{Geo}. Moreover, being $G$ P3R the cohomology group $H^2(G;{\Z}G)$ is free abelian (see \cite{L0}). Therefore,
\[
{\mathfrak P}(G) = \mbox{rank } H^1_e(M) + 1 = \mbox{rank } H^2(G;{\Z}G) + 1
\]
\noindent by Proposition \ref{p3}.
\end{proof}
\begin{corollary} \label{c2} If the $1$-ended P3R group $G$ contains at least one element of infinite order (in particular, if $G$ is torsion-free) then the possible values for
${\mathfrak P}(G)$ are $0,1,2$ or $\infty$.
\end{corollary}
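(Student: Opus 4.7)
The plan is to deduce Corollary~\ref{c2} as a rather direct consequence of Proposition~\ref{p4} together with a classical Farrell-type dichotomy for the cohomology group $H^2(G;{\Z}G)$.

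First, I would invoke Proposition~\ref{p4}: since $G$ is a $1$-ended P3R group, the group $H^2(G;{\Z}G)$ is free abelian, and if moreover ${\mathfrak P}(G) \neq 0$, then
\[
{\mathfrak P}(G) = \operatorname{rank} H^2(G;{\Z}G) + 1.
\]
Thus the problem reduces to showing that $\operatorname{rank} H^2(G;{\Z}G) \in \{0,1,\infty\}$, i.e., that no finite rank $\geq 2$ can occur.

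Second, I would appeal to Farrell's theorem on $H^2$: for a finitely presented group $G$ containing an element of infinite order, the cohomology group $H^2(G;{\Z}G)$ is either trivial, infinite cyclic, or not finitely generated as an abelian group. Since Proposition~\ref{p4} already guarantees that $H^2(G;{\Z}G)$ is free abelian, the ``not finitely generated'' alternative forces infinite rank, while the other two alternatives correspond to rank $0$ and rank $1$, respectively. Hence $\operatorname{rank} H^2(G;{\Z}G) \in \{0,1,\infty\}$, from which \eqref{nplanos} yields ${\mathfrak P}(G) \in \{1,2,\infty\}$ whenever ${\mathfrak P}(G) \neq 0$; combining with the remaining case ${\mathfrak P}(G)=0$, we obtain the claim.

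The main potential obstacle is formal rather than conceptual: one must make sure that Farrell's hypotheses are met in our setting (finite presentability of $G$, existence of an element of infinite order, and the integral-coefficient version of the statement, as opposed to Farrell's original $\F_2$-coefficient version). Finite presentability is built into being a P3R group, the infinite-order element is part of the hypothesis, and the integral version follows from the same line of argument (or can be quoted from the follow-up literature). Once these are in place, the conclusion is immediate from Proposition~\ref{p4} and the relation \eqref{nplanos}.
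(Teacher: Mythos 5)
Your proposal is correct and follows essentially the same route as the paper: the paper's proof is precisely the combination of the identity ${\mathfrak P}(G) = \operatorname{rank} H^2(G;{\Z}G) + 1$ from Proposition \ref{p4} with Farrell's trichotomy for $H^2(G;{\Z}G)$ (cited there as Corollary 5.2 of \cite{Far}). Your additional remark that the free abelian-ness of $H^2(G;{\Z}G)$ is what converts Farrell's ``not finitely generated'' alternative into ``infinite rank'' is a correct and slightly more careful reading of the same argument.
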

\begin{proof} This is a direct consequence of equality (\ref{nplanos}) and Corollary 5.2 in \cite{Far}, stating that the rank of $H^2(G;{\Z}G)$ is $0$,$1$ or $\infty$.
\end{proof}
\begin{remark} \label{rnplanos} Notice that equality (\ref{nplanos}) holds for any commutative ring $R$, i.e.,
\begin{equation}\label{nplanosR}
{\mathfrak P}(G) = \mbox{rank } \ H^2(G;RG) +1
\end{equation}
Indeed, since $M \in {\mathcal M}_G$ is orientable and \cite[Cor. 13.2.9]{Geo} and \cite[Cor. 13.2.13]{Geo} hold for an
arbitrary commutative ring $R$, we have isomorphisms $H^1_e(M;R) \cong H^{1}_e(\widetilde{X};R)\cong H^2(G;RG)$ and the same proof as in Proposition \ref{p3} yields
the equality (\ref{nplanosR}).
\end{remark}
\begin{proposition} \label{2dualitygroup} If $G$ is a $1$-ended P3R group with $cd(G)\leq 2$, then ${\mathfrak P}(G)\geq 2$.
\end{proposition}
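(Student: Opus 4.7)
My plan is to combine the characterization of planes in $\partial M$ from Propositions \ref{p2}--\ref{p3} with formula \eqref{nplanos} of Proposition \ref{p4}, once the nonvanishing of $H^2(G; {\Z}G)$ has been established. The key inequality $\mathfrak P(G) \geq 2$ will follow as soon as we know $\mathrm{rank}\, H^2(G; {\Z}G) \geq 1$ and that $\mathfrak P(G) \neq 0$, so that the formula applies.

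First, I would rule out $cd(G) \leq 1$ by a short case analysis. If $cd(G) = 0$ then $G$ is trivial, hence $0$-ended; and by the Stallings--Swan theorem, $cd(G) \leq 1$ for a finitely generated group forces $G$ to be free, but no free group is $1$-ended. So $cd(G) = 2$. Since $G$ is finitely presented (hence of type $FP_2$), Bieri's theorem on top-dimensional cohomology with group-ring coefficients yields $H^2(G; {\Z}G) \neq 0$; this group is free abelian by Proposition \ref{p4}, so its rank is at least $1$.

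Next, I would show $\mathfrak P(G) \neq 0$. Suppose to the contrary that some $M \in {\mathcal M}_G$ has no planes in $\partial M$, i.e. $\mathfrak p(M) = 0$. Proposition \ref{p2} then gives $H^e_0(\partial M) = 0$. Feeding this into the long exact sequence of the pair $(M, \partial M)$ displayed in the proof of Proposition \ref{p3}, together with the triviality of $k_* : H^e_1(M) \to H^e_1(M, \partial M)$ established there, we get $H^e_1(M, \partial M) = 0$. The duality isomorphism $H^1_e(M) \cong H^e_1(M, \partial M)$ and the identification $H^1_e(M) \cong H^2(G; {\Z}G)$ recalled in the proof of Proposition \ref{p4} would then force $H^2(G; {\Z}G) = 0$, contradicting the previous paragraph.

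Hence $\mathfrak P(G) \neq 0$, and formula \eqref{nplanos} applies:
\[
\mathfrak P(G) \;=\; \mathrm{rank}\, H^2(G; {\Z}G) + 1 \;\geq\; 2.
\]
The only input outside the framework already set up in this subsection is Bieri's nonvanishing result, used to pass from $cd(G) = 2$ and finite presentability to $H^2(G; {\Z}G) \neq 0$; verifying that this is the only genuine external ingredient is the main (mild) obstacle---everything else is a short diagram chase in the end-(co)homology sequences already exploited in Propositions \ref{p2} and \ref{p3}.
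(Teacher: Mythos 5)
Your proof is correct and follows essentially the same route as the paper: both arguments reduce the statement to the nonvanishing of $H^2(G;{\Z}G)$ (you via Bieri's result that an $FP_2$ group with $cd(G)=2$ has nonzero top cohomology with group-ring coefficients, the paper via the Bieri--Eckmann duality-group theorem) and then conclude via Proposition \ref{p4}. Your explicit verification that ${\mathfrak P}(G)\neq 0$ before invoking formula \eqref{nplanos} is a sensible precaution, but it is already contained in the final assertion of Proposition \ref{p3}, namely that ${\mathfrak p}(M)\geq 2$ if and only if $H^1_e(M)\neq 0$.
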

\begin{proof} Since $G$ is $1$-ended and $cd(G)\leq 2$, then $G$ is a $2$-dimensional duality group by \cite[Thm. 5.2]{BieriEckmannDualityGroups} (see also the argument in \cite[Prop. 9.17(b)]{Bi}). Thus, $H^2(G;{\Z}G) \neq 0$ and we get ${\mathfrak P}(G)\geq 2$, by Proposition \ref{p4}.
\end{proof}
\begin{corollary} \label{gemdim2} If $G$ is a $1$-ended P3R group with $geom \; dim (G)= 2$, then ${\mathfrak P}(G) \geq 2$.
\end{corollary}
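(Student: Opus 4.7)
The plan is essentially a one-line reduction to the preceding proposition. Recall that for any group $G$ one has the inequality $cd(G) \leq geom\,dim(G)$: if $X$ is an $n$-dimensional $K(G,1)$-complex then the cellular chain complex of $\widetilde{X}$ provides a free resolution of ${\Z}$ over ${\Z}G$ of length $n$. Therefore, the hypothesis $geom\,dim(G) = 2$ immediately yields $cd(G) \leq 2$.

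Having secured this bound, I would simply invoke Proposition \ref{2dualitygroup}: since $G$ is $1$-ended, P3R and satisfies $cd(G) \leq 2$, the conclusion ${\mathfrak P}(G) \geq 2$ follows at once. (In fact $cd(G) = 2$ here, since $cd(G) = 1$ would force $G$ to be free by the Stallings--Swan theorem, contradicting $geom\,dim(G) = 2$.)

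There is no real obstacle, and the step that does the actual work has already been absorbed into Proposition \ref{2dualitygroup}: namely, the use of the Bieri--Eckmann theorem identifying $1$-ended groups of cohomological dimension $\leq 2$ as $2$-dimensional duality groups, which forces $H^2(G;{\Z}G) \neq 0$, together with the formula ${\mathfrak P}(G) = \mbox{rank } H^2(G;{\Z}G) + 1$ from Proposition \ref{p4}. Thus the corollary is just the geometric-dimension reformulation of Proposition \ref{2dualitygroup}.
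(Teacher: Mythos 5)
Your reduction is exactly the paper's intended argument: the corollary follows immediately from Proposition \ref{2dualitygroup} via the standard inequality $cd(G) \leq geom\,dim(G)$, which the hypothesis $geom\,dim(G)=2$ turns into $cd(G)\leq 2$. The proposal is correct and matches the paper's (implicit) proof.
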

\begin{remark} \label{finitepi2}
Notice that in case $\pi_2(M)=0$ for some $3$-manifold $M \in {\mathcal M}_G$, then $geom\ dim (G)= 2$ and hence ${\mathfrak P}(G) \geq 2$.
\end{remark}
For $\mathfrak{P}(G)=2$ we have a stronger converse to Corollary \ref{gemdim2} stated as Theorem \ref{vsg} below. Recall that a group $G$ satisfies {\it virtually} the property $\mathcal{P}$ if $G$ contains a subgroup $H \leq G$ of finite index satisfying $\mathcal{P}$. The property $\mathcal{P}$ is said to be {\it virtual} if
it holds for any group which satisfies $\mathcal{P}$ virtually. For instance, the number of ends and
semistability are well-known virtual properties of a group. In \cite{ACLQ} it was shown that proper $3$-realizability is a virtual property. Next, we will enhance \cite[Thm. 1.1]{ACLQ} by proving that the $\partial$-number ${\mathfrak P}(G)$ is a virtual property of the P3R group $G$. Namely,
\begin{theorem} \label{vnp} Let $G$ be a $1$-ended finitely presented group $G$, and let $H \leq G$ be a subgroup of finite index. Then $G$
is P3R if and only if so is $H$, and in this case ${\mathfrak P}(G) = {\mathfrak P}(H)$.
\end{theorem}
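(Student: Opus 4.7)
The assertion that $G$ is P3R if and only if $H$ is P3R is essentially \cite[Thm. 1.1]{ACLQ}, so the real novelty is the equality $\mathfrak{P}(G) = \mathfrak{P}(H)$ when both are P3R. My plan is to establish an elementary one-sided inequality $\mathfrak{P}(H)\leq \mathfrak{P}(G)$ and an abelian-group isomorphism $H^2(G;{\Z}G)\cong H^2(H;{\Z}H)$ through a single covering argument, and then close the gap via a cohomological dichotomy together with a wedging-with-$S^2$ trick handling the residual low-$\mathfrak{P}$ range.

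For the covering argument, given any finite $2$-dimensional CW-complex $X$ with $\pi_1(X)\cong G$ and $\widetilde{X}$ properly homotopy equivalent to some $M\in \mathcal{M}_G$, the cover $X_H\to X$ corresponding to $H$ is again a finite $2$-dimensional CW-complex with $\pi_1(X_H)\cong H$ and $\widetilde{X_H}=\widetilde{X}$, so $M\in \mathcal{M}_H$; hence $\mathcal{M}_G\subseteq \mathcal{M}_H$, yielding $\mathfrak{P}(H)\leq \mathfrak{P}(G)$, and via \cite[Cor. 13.2.9, 13.2.13]{Geo} the same identification gives $H^2(G;{\Z}G)\cong H^1_e(\widetilde{X})=H^1_e(\widetilde{X_H})\cong H^2(H;{\Z}H)$. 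When $H^2(G;{\Z}G)\neq 0$, Proposition \ref{p3} applied to every $M\in \mathcal{M}_G$ (noting that $H^1_e(M)\cong H^2(G;{\Z}G)$ for all such $M$) forces $\mathfrak{P}(G),\mathfrak{P}(H)\geq 2$, and Proposition \ref{p4} together with the above isomorphism yields
\[
\mathfrak{P}(G)=1+\mathrm{rank}\,H^2(G;{\Z}G)=1+\mathrm{rank}\,H^2(H;{\Z}H)=\mathfrak{P}(H).
\]

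The delicate case is $H^2(G;{\Z}G)=0$, where both $\mathfrak{P}$'s lie in $\{0,1\}$; only the subcase $\mathfrak{P}(H)=0$ fails to follow from the one-sided inequality. I would pick a finite $Y$ with $\pi_1(Y)\cong H$ and $\widetilde{Y}\simeq N$ properly, with $\mathfrak{p}(N)=0$; the $\vee S^2$-stability recorded in the discussion preceding (\ref{dp}) (from the proof of \cite[Prop. 1.3]{ACLQ}) gives $\widetilde{Y\vee S^2}\simeq N'$ properly for some $3$-manifold $N'$ with $\mathfrak{p}(N')=\mathfrak{p}(N)=0$. For any finite $X_0$ with $\pi_1(X_0)\cong G$, commensurability makes $G$ and $H$ quasi-isometric and hence proper $2$-equivalent by Remark \ref{quasi-isometric}, so Theorem \ref{alternative}(d) produces $\widetilde{X_0\vee S^2}\simeq \widetilde{Y\vee S^2}\simeq N'$; therefore $N'\in \mathcal{M}_G$ and $\mathfrak{P}(G)=0=\mathfrak{P}(H)$. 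This last shuttling of a small-boundary realization from $H$ to $G$ via wedging is the main technical obstacle, and it is resolved precisely by combining the $\vee S^2$-stability from \cite{ACLQ} with the quasi-isometry-to-proper-$2$-equivalence passage of Theorem \ref{alternative}.
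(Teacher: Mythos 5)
Your proof is correct, but it takes a genuinely different route from the paper's. The paper argues uniformly in both directions: for any finite $2$-complexes $X$ and $Y$ carrying $G$ and $H$, the proof of \cite[Thm. 1.1]{ACLQ} gives a proper homotopy equivalence $\widetilde{X \vee S^2} \simeq \widetilde{Y \vee S^2}$, and since wedging a single $2$-sphere preserves realizability by a $3$-manifold together with the number $\mathfrak{p}$ of boundary planes (\cite[Prop. 1.3]{ACLQ}), each $M \in \mathcal{M}_G$ yields an $N \in \mathcal{M}_H$ with $\mathfrak{p}(N) = \mathfrak{p}(M)$ and conversely, so the two minima coincide. You replace one of the two directions by the elementary inclusion $\mathcal{M}_G \subseteq \mathcal{M}_H$ obtained by pulling a realizing complex back to the finite cover (which incidentally gives the easy half of the P3R equivalence without citing \cite{ACLQ}), and for the reverse inequality you split on whether $H^2(G;{\Z}G) \cong H^2(H;{\Z}H)$ vanishes: if not, Propositions \ref{p3} and \ref{p4} pin both $\partial$-numbers down to $1 + \mathrm{rank}\, H^2$; if so, both lie in $\{0,1\}$ and the wedge transfer is invoked only to exclude $\mathfrak{P}(H) = 0 < 1 = \mathfrak{P}(G)$. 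Your organization isolates exactly where the $3$-manifold machinery is unavoidable --- the $0$-versus-$1$ ambiguity that $H^2(G;{\Z}G)$ cannot detect (cf. Remark \ref{r0}(b)) --- at the price of a case analysis, whereas the paper's argument is shorter, symmetric, and makes no appeal to Proposition \ref{p4}; both ultimately rest on the same $\vee S^2$-stability of $\mathfrak{p}$ from \cite[Prop. 1.3]{ACLQ}.
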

\begin{proof} By \cite[Thm. 1.1]{ACLQ} we already know that the proper $3$-realizability of $G$ is equivalent to that of $H$. Moreover, the proof of \cite[Thm. 1.1]{ACLQ} shows that for any finite $2$-dimensional CW-complexes $X$ and $Y$ having $G$ and $H$ as fundamental groups respectively, there are finite bouquets of $2$-spheres $\displaystyle \bigvee_{i=1}^{m} S^2_i$ and $\displaystyle \bigvee_{j=1}^{n} S^2_j$ so that the universal covers of $W= X \vee (\displaystyle \bigvee_{i=1}^{m} S^2_i)$ and $Z= Y \vee (\displaystyle \bigvee_{j=1}^{n} S^2_j)$ are proper homotopy equivalent. Observe that $\widetilde{W·}$ is proper homotopy equivalent to $\widetilde{X \vee S^2}$, and similarly $\widetilde{Z}$ is proper homotopy equivalent to $\widetilde{Y \vee S^2}$ (see the proof of \cite[Prop. 1.3]{ACLQ}). Thus, any $3$-manifold $M \in \mathcal M_G$ with the same proper homotopy type of $\widetilde{X}$ (or of $\widetilde{X \vee S^2}$) yields a $3$-manifold $M' \in \mathcal M_H$ with the same proper homotopy type of $\widetilde{Y \vee S^2}$ and with $\mathfrak p(M) = \mathfrak p(N)$, and viceversa. Therefore, $\mathfrak P(G)=\mathfrak P(H)$.
\end{proof}
Next, we point out that the $1$-ended P3R groups with $\partial$-number $2$ are exactly the virtually surface groups.
\begin{theorem} \label{vsg} A $1$-ended finitely presented P3R group $G$ has ${\mathfrak P}(G) = 2$ if and only if $G$ is a virtually surface group.
\end{theorem}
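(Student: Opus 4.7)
For the backward direction, suppose $G$ contains a finite-index subgroup $H \cong \pi_1(\Sigma)$ for some closed surface $\Sigma$; since $G$ is $1$-ended, so is $H$, and hence $\Sigma \neq S^2, {\mathbb R}P^2$. Taking $\Sigma$ as a $K(H,1)$, its universal cover satisfies $\widetilde{\Sigma} \cong {\R}^2$, and $M = {\R}^2 \times [0,1]$ is a $3$-manifold lying in ${\mathcal M}_H$ (it properly deformation retracts onto ${\R}^2 \simeq \widetilde{\Sigma}$). The boundary $\partial M$ consists of exactly two planes, so ${\mathfrak p}(M) = 2$, yielding ${\mathfrak P}(H) \leq 2$. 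Since $H^2(H;{\Z}H) \cong {\Z}$ by Poincar\'e duality, Proposition \ref{p4} forces the equality ${\mathfrak P}(H) = 2$, and Theorem \ref{vnp} then gives ${\mathfrak P}(G) = {\mathfrak P}(H) = 2$.

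For the forward direction, suppose ${\mathfrak P}(G) = 2$. By Proposition \ref{p4}, $H^2(G;{\Z}G)$ is a free abelian group of rank ${\mathfrak P}(G) - 1 = 1$, hence isomorphic to ${\Z}$. The plan is then to invoke a theorem of Bowditch (see his paper \emph{Planar groups and the Seifert conjecture}) asserting that any finitely presented, one-ended group $G$ with $H^2(G;{\Z}G) \cong {\Z}$ is virtually a closed surface group, which gives the desired conclusion.

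The main obstacle is the forward direction, whose essential content is Bowditch's deep characterisation of groups with infinite cyclic $H^2(G;{\Z}G)$; the P3R hypothesis is used only to access Proposition \ref{p4} and thereby convert the geometric statement ${\mathfrak P}(G)=2$ into the algebraic condition $H^2(G;{\Z}G) \cong {\Z}$. One might hope that the additional $3$-manifold structure afforded by P3R (an associated $M \in {\mathcal M}_G$ whose boundary has precisely two planes and possibly some spheres) would allow for a more direct geometric argument, perhaps by identifying $M$ up to proper homotopy with ${\R}^2 \times [0,1]$ and then transporting the $G$-action appropriately; however, any such route still seems to require a surface-group characterisation of comparable depth, so appealing to Bowditch's theorem appears to be the shortest path.
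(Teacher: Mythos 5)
Your proposal is correct and follows the paper's argument essentially verbatim: the forward direction converts $\mathfrak{P}(G)=2$ into the algebraic statement that $H^2(G;\cdot\,G)$ has rank one and then invokes Bowditch's Theorem 0.1 from \emph{Planar groups and the Seifert conjecture}, while the converse reduces to the surface subgroup via Theorem \ref{vnp}. The only (cosmetic) differences are that the paper quotes Bowditch over field coefficients, $H^2(G;\mathbb{F}G)\cong\mathbb{F}$, using Remark \ref{rnplanos} and the fact that a finitely presented group is $FP_2$ over any ring, whereas you state it over $\Z$; and that for the lower bound $\mathfrak{P}(H)\geq 2$ in the converse one should strictly appeal to Proposition \ref{p3} (or Proposition \ref{2dualitygroup}) --- since $H^1_e(M)\cong H^2(H;\Z H)\neq 0$ forces at least two planes in $\partial M$ for \emph{every} $M\in\mathcal{M}_H$ --- rather than to Proposition \ref{p4}, whose rank formula is only available once $\mathfrak{P}(H)\neq 0$ is already known.
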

\begin{proof} Assume $G$ is a $1$-ended P3R group with ${\mathfrak P}(G) = 2$. Then, $H^2(G;\mathbb{F}G) \cong \mathbb{F}$
for any field $\mathbb{F}$ by Remark \ref{rnplanos}, and \cite[Thm. 0.1]{B} yields that $G$ is a virtually surface group. Here we use the fact that any finitely presented group is an $FP_2$ group
for any commutative ring $R$. The converse is an immediate consequence of Theorem \ref{vnp}.
\end{proof}	
\indent The previous results do not assume the semistability at infinity of the $1$-ended P3R group $G$. Under this aditional hypothesis, the $3$-manifolds in $\mathcal M_G$ are then semistable at infinity. Thus, Perelman's results on the Poincar\'e Conjecture \cite{Morgan} lead us to the following theorem.
\begin{theorem} \label{t1.1} Let $M$ be a $1$-ended simply connected and semistable $3$-manifold, then $M$ has the proper homotopy
type of the complement in ${\R}^3$ of a disjoint union of locally finite families of pairwise disjoint open $3$-balls and open half-spaces.
\end{theorem}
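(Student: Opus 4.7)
The plan is to prove the theorem by combining Perelman's solution of the Poincar\'e Conjecture with an inductive exhaustion argument governed by the semistability hypothesis. The goal is to build a proper homotopy equivalence between $M$ and a suitable complement in $\R^3$ by matching up compact exhaustions of the two spaces piece by piece.

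By Proposition \ref{p1} the boundary $\partial M$ (possibly empty) is a locally finite disjoint union of $2$-spheres $\{S_\alpha\}$ and planes $\{P_\beta\}$; these are exactly the boundary components that must appear in the target complement in $\R^3$. I first construct a nested cofinal exhaustion $K_1 \sub K_2 \sub \cdots \sub M$ by compact connected $3$-submanifolds whose boundaries are transverse to $\partial M$, arranged so that each $K_n$ is simply connected, the interior boundary $\partial K_n \setminus \partial M$ is a finite disjoint union of $2$-spheres, and each $K_n\cap P_\beta$ is a compact planar surface. This requires the semistability hypothesis in an essential way: since $\pi_1(M)=1$ every loop in a complementary region $M\setminus K_n$ bounds a disk in $M$, but controlling \emph{where} those disks lie (so that the complementary regions become simply connected after passing to a cofinal subsequence) is precisely what semistability affords. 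Repeated applications of Dehn's Lemma, the Loop Theorem and the Sphere Theorem on the compact $3$-manifolds $K_n$ then let me simplify $\partial K_n\setminus\partial M$ down to a union of $2$-spheres.

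With such an exhaustion in hand, each $K_n$ is a simply connected compact $3$-manifold whose boundary consists of $2$-spheres (both the spheres in $\partial M\cap K_n$ and the interior ones produced above) together with compact planar pieces cut out of the $P_\beta$. Capping the sphere boundary components with $3$-balls (and temporarily doubling along the planar pieces) yields a closed simply connected $3$-manifold, which by Perelman's Poincar\'e Theorem is $S^3$. Reopening the caps identifies each $K_n$ with a standard model: a $3$-ball from which finitely many pairwise disjoint open $3$-balls and open half-balls have been removed. I then construct in $\R^3$ a locally finite arrangement of open $3$-balls $\{B_\alpha\}$ (one for each $S_\alpha$) and open half-spaces $\{H_\beta\}$ (one for each $P_\beta$) admitting a parallel exhaustion $L_1\sub L_2\sub \cdots$ of $N = \R^3 \setminus (\bigsqcup B_\alpha\sqcup \bigsqcup H_\beta)$ with $L_n \cong K_n$, and glue these homeomorphisms inductively to produce the required proper homotopy equivalence $M \simeq N$.

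The main obstacle I anticipate is the construction of the exhaustion in the second paragraph: the combined use of semistability and simple-connectedness to force the interior boundary of the $K_n$ to consist only of $2$-spheres, compatibly with the non-compact planar components of $\partial M$. Once this delicate simplification is carried out, the identification via Perelman and the matching with the model complement in $\R^3$ are essentially geometric and proceed by induction on $n$, using the gluing lemma for proper homotopy equivalences.
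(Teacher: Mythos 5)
Your overall strategy --- exhaust $M$ by compact simply connected submanifolds, identify each piece with a standard model via Perelman, and match it with a parallel exhaustion of a model complement in $\R^3$ --- is genuinely different from the paper's, and it has a real gap at precisely the step you flag as the main obstacle. First, the claim that semistability lets you arrange the complementary regions $M - K_n$ to become simply connected after passing to a cofinal subsequence is false whenever $\partial M$ contains at least two planes: in that case $pro-\pi_1(M)$ is a nontrivial tower (see Theorem \ref{t1bis}), and already for the target manifold $\R^3$ minus two disjoint open half-spaces (i.e.\ $\R^2 \times [0,1]$) every complementary region of a compact set carries nontrivial $\pi_1$. Simple connectivity of the complements is simple connectivity at infinity, strictly stronger than semistability, and it fails for most manifolds covered by the theorem. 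Relatedly, the interior boundary $\partial K_n \setminus \partial M$ cannot be a union of closed $2$-spheres when planes are present; it is forced to be a compact surface with boundary circles on the planes $P_\beta$, as in Lemma \ref{lemaentornos}. Second, even after repairing these statements, the heart of the matter --- that semistability (end $1$-movability) together with irreducibility forces a contractible open $3$-manifold to be $\R^3$, and its bounded analogue to be a missing-boundary manifold $B^3 - Z$ --- is a deep theorem of Husch--Price, Brin--Thickstun and Kakimizu. The Whitehead manifold (contractible, irreducible, not $\R^3$, hence not semistable) shows this cannot follow from ``repeated applications of Dehn's Lemma, the Loop Theorem and the Sphere Theorem'' plus simple connectivity; your sketch in effect proposes to reprove these structure theorems without supplying the argument, and the inductive construction of the $K_n$ and of the compatible identifications $L_n \cong K_n$ is exactly where that difficulty reappears.

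For comparison, the paper sidesteps the exhaustion entirely: it caps off the sphere components of $\partial M$ with $3$-balls and the plane components with half-spaces to obtain an open contractible $3$-manifold $\widehat{M}$, which is irreducible because Perelman's theorem rules out fake $3$-balls, and then quotes Kakimizu and Brin--Thickstun to conclude $\widehat{M} \cong \R^3$; capping only the spheres yields $\widetilde{M}$, identified as a missing-boundary manifold $B^3 - Z$ by Brin--Thickstun. If you wish to keep your route, you must either cite those results at the step where your $K_n$'s are assembled, or give a complete engulfing argument replacing them.
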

\begin{proof} We already know that $\partial M$ is a union of planes and spheres, by Proposition \ref{p1}. Let $\widehat{M}$ be the manifold obtained by attaching copies of the $3$-ball and the
$3$-dimensional half-space thus capping off the boundary of $M$, and let $\widetilde{M} \sub \widehat{M}$ be the submanifold obtained by attaching only the corresponding copies of the $3$-ball. Since $M$ does not contains fake $3$-balls (by the solution to the Poincar\'e Conjecture \cite{Morgan}) it follows that $\widehat{M}$ is an irreducible semistable and contractible open $3$-manifold and hence homeomorphic to ${\R}^3$, see \cite{K} or \cite[(A) on p. 213]{BT}. Similarly, $\widetilde{M}$ is an irreducible semistable and contractible $3$-manifold with $\partial \widetilde{M} \neq \emptyset$, and hence $\widetilde{M}$ is a missing boundary manifold of the form $B^3-Z$, where $Z \sub \partial B^3 = S^2$ is a closed subset with $S^2-Z$ being a disjoint union of open disks in one-to-one correspondence with the planes in $\partial M$. See \cite[Prop. 8.2]{BT2}.
\end{proof}
\begin{corollary} \label{t1} Let $G$ be a $1$-ended and semistable at infinity P3R group. Any associated $3$-manifold $M \in \mathcal M_G$ has the proper homotopy
type of the complement in ${\R}^3$ of a disjoint union of locally finite families of pairwise disjoint open $3$-balls and open half-spaces.
\end{corollary}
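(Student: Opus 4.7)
The plan is to reduce Corollary \ref{t1} directly to Theorem \ref{t1.1}. Since $M \in \mathcal{M}_G$ is by definition a $3$-manifold that has the same proper homotopy type as $\widetilde{X}$ for some finite $2$-dimensional CW-complex $X$ with $\pi_1(X) \cong G$, it suffices to transfer the hypotheses on $G$ to the geometric hypotheses on $M$ required by Theorem \ref{t1.1}, namely that $M$ is $1$-ended, simply connected, and semistable at infinity.

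First I would observe that $\widetilde{X}$ is simply connected as a universal cover, and the number of ends and semistability at infinity are proper homotopy invariants of the $2$-skeleton of the universal cover (as recalled right after Definition \ref{p2e}). Because $G$ is $1$-ended and semistable at infinity by assumption, $\widetilde{X}$ is $1$-ended and semistable at infinity, and these properties pass to the proper homotopy equivalent space $M$. Simple connectivity also passes to $M$, since $\pi_1$ is a proper homotopy invariant. Hence $M$ satisfies the hypotheses of Theorem \ref{t1.1}.

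Applying Theorem \ref{t1.1} to $M$ then yields the stated conclusion verbatim: $M$ has the proper homotopy type of the complement in $\R^3$ of a disjoint union of locally finite families of pairwise disjoint open $3$-balls and open half-spaces. The only potential subtlety is to make sure that the ``semistable at infinity'' notion for the group $G$ (defined via $\widetilde{X}^2$) agrees with semistability of the $3$-manifold $M$; but since $M$ and $\widetilde{X}$ are proper homotopy equivalent and semistability is invariant under proper homotopy equivalence of (strongly) locally finite CW-complexes, this is immediate. So there is no substantive obstacle: the corollary is a direct specialization of Theorem \ref{t1.1}, and the only work is the short verification above that $M$ inherits the three geometric properties from $G$ via the proper homotopy equivalence with $\widetilde{X}$.
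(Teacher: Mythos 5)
Your proposal is correct and matches the paper's approach: the paper treats Corollary \ref{t1} as an immediate consequence of Theorem \ref{t1.1}, having already noted just before that theorem that under the semistability hypothesis on $G$ the manifolds in $\mathcal M_G$ are $1$-ended, simply connected and semistable at infinity. Your verification that these properties transfer to $M$ via the proper homotopy equivalence with $\widetilde{X}$ is exactly the (implicit) content of the paper's deduction.
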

\begin{remark} \label{values}
\begin{enumerate}[(a)]
\item For a $1$-ended and semistable at infinity P3R group $G$, the possible value $\mathfrak{P}(G)=1$ from Corollary \ref{c2} cannot occur. Indeed, the only possible $3$-manifold  $M \in \mathcal M_G$  with $\mathfrak{p}(M)=1$ would the half-space ${\R}^3_{+}$, by Remark \ref{r0}(b) and Corollary \ref{t1}, which is ruled out by Remark \ref{finitepi2}.
\item For any $1$-ended and semistable at infinity P3R group $G$, the set $\mathcal M_G$ contains at most two different proper homotopy types determined by the $\partial$-number $\mathfrak{P}(G)$ and the presence of $2$-spheres in the boundary of the $3$-manifolds (see the proof of \cite[Prop. 1.3]{ACLQ}). If $\mathfrak{P}(G) \neq 0$ then, by Corollary \ref{t1} and the description in \cite[Thm. I']{BT2}, we could argue that those proper homotopy types are also the topological ones. For $\mathfrak{P}(G)=0$ we have one proper homotopy type and two topological types, by Remark \ref{r0}. On the other hand, in the infinite ended case the set $\mathcal M_G$ may contain infinitely many different topological types (see the example given in the Appendix).
\end{enumerate}
\end{remark}
Corollary \ref{t1} allows us to determine easily the fundamental pro-group of any $3$-manifold $M\in\mathcal M_G$ associated to any $1$-ended and semistable at infinity P3R group $G$. For this, we start by choosing an appropriate system of $\infty$-neighborhoods of $M$ in terms of the families  $\{\Pi_m\}_{m \geq 1}$ and $\{\Sigma_k\}_{k\geq 1}$ (possible empty, but not simultaneously since $M$ is proper homotopy equivalent to a $2$-dimensional CW-complex, see \cite[Prop.3.1]{ACLQ}) of planes and spheres in $\partial M$ respectively.
\begin{lemma} \label{lemaentornos} Under the above hypotheses, there is a system of $\infty$-neighborhoods of $M$ consisting of connected $3$-manifolds with
boundary $\{U_j\}_{ \geq 0}$ with $U_0 = M$ and such that the following two conditions hold:
\begin{enumerate}[(a)]
\item The spheres $\Sigma_k$ miss all the topological frontiers $Fr(U_j)$.
\item For each $j \geq 0$, there is an integer $m(j) \geq 0$ such that the plane $\Pi_m$ is contained in $U_j - Fr(U_j)$ if $m \geq m(j)$, and $Fr(U_j)$ is a compact surface with boundary
$\displaystyle \bigcup_{m=1}^{m(j)} \Pi_m \cap Fr(U_j)$; moreover, there exist homeomorphisms of pairs
$\displaystyle \left( \Pi_m \cap U_j, \Pi_m \cap Fr(U_j) \right) \cong \left( S^1 \times [m(j)-m, \infty), S^1 \times \{m(j)-m\} \right)$.
\end{enumerate}
\end{lemma}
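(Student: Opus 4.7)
By Corollary \ref{t1}, we may identify $M$ (up to homeomorphism, after replacing it by a suitable representative of its proper homotopy class) with the complement in ${\R}^3$ of a locally finite, pairwise disjoint family of open $3$-balls $\{B_k\}$ (so $\Sigma_k = \partial B_k$) and open topological half-spaces $\{H_m\}$ (so $\Pi_m = \partial H_m$). The plan is to build a nested compact exhaustion $K_1 \sub K_2 \sub \cdots \sub M$ of connected $3$-submanifolds with boundary, set $U_0 = M$ and $U_j = \overline{M \setminus K_j}$ for $j \geq 1$, and then verify the listed properties directly.

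Fix any proper function $\rho \colon M \to [0, \infty)$, for concreteness the restriction of the Euclidean distance from a basepoint of ${\R}^3$. By Sard's theorem, pick a strictly increasing sequence of regular values $r_1 < r_2 < \cdots \to \infty$ of $\rho$ that are simultaneously regular for each restriction $\rho|_{\Pi_m}$ (only countably many such restrictions are relevant, by local finiteness), and after small perturbations further arrange that no $\Sigma_k$ meets the level set $\rho^{-1}(r_j)$ (only finitely many $\Sigma_k$ can straddle a given $r_j$, each being compact). Let $\widetilde K_j = \rho^{-1}([0, r_j])$. Then $Fr(\widetilde K_j)$ is a compact $2$-submanifold meeting $\partial M$ transversely, each $\Sigma_k$ lies entirely in $\widetilde K_j$ or entirely outside it, and each $\Pi_m$ meets $Fr(\widetilde K_j)$ transversely in a disjoint union of circles. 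We then modify $\widetilde K_j$ to $K_j$ by adjoining, within the bi-collar of each relevant $\Pi_m$ in $M$, the bounded complementary regions of $\widetilde K_j \cap \Pi_m$ in $\Pi_m$; since $\Pi_m \cong {\R}^2$ is simply connected, the outcome is that $\Pi_m \cap K_j$ becomes a single closed disk for every plane $\Pi_m$ meeting $K_j$. Connectedness of each $U_j$ is ensured, for $r_j$ sufficiently large, by the $1$-endedness of $M$.

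Setting $m(j) = \#\{m : \Pi_m \cap K_j \neq \emptyset\}$, which is finite (by local finiteness) and non-decreasing in $j$, the conditions of the lemma follow directly: (a) is immediate because each $\Sigma_k$ has been arranged to avoid $Fr(K_j) = Fr(U_j)$; for (b), planes $\Pi_m$ with $m > m(j)$ miss $K_j$ altogether and hence lie in $U_j - Fr(U_j)$, while $Fr(U_j)$ is by construction a compact $2$-surface with boundary $\bigcup_{m=1}^{m(j)} \Pi_m \cap Fr(U_j)$; and for $1 \leq m \leq m(j)$, $\Pi_m \cap U_j$ is the complement in $\Pi_m \cong {\R}^2$ of an open disk, which is homeomorphic as a pair to $\bigl(S^1 \times [m(j)-m, \infty),\, S^1 \times \{m(j)-m\}\bigr)$ by a trivial reparametrization. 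The main technical obstacle is the disk-modification step, which hinges on the simple connectivity of each boundary plane and the existence of bi-collar neighborhoods, both of which are standard features of manifolds with boundary.
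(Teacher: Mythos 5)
There is a genuine gap at the very first step. Your proof rests on identifying $M$ homeomorphically with a standard complement $\R^3 \setminus (\bigcup_k B_k \cup \bigcup_m H_m)$, but Corollary \ref{t1} only provides a \emph{proper homotopy equivalence}, not a homeomorphism, and your own parenthetical ``after replacing it by a suitable representative of its proper homotopy class'' concedes exactly this. That substitution is not harmless here: the lemma asserts the existence of a system of $\infty$-neighborhoods of $M$ \emph{itself}, and in the proof of Theorem \ref{t1bis} this system is used to compare $\pi_1(U_j)$ with $\pi_1(\widetilde{U}_j)$ via the honest inclusion $M \subseteq \widetilde{M}$; a system of $\infty$-neighborhoods of some properly homotopy equivalent model does not transport back to $M$. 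This is precisely why the paper's proof works intrinsically on $M$ in the PL category: it starts from \cite[Lemma 3.1]{Brown} to obtain connected neighborhoods of infinity $W_j$ with compact connected frontier, absorbs the finitely many spheres meeting each $W_j$ by passing to a regular neighborhood, and then enlarges each intersection $W_j' \cap \Pi_m$ to a standard infinite cylinder inside $\Pi_m$. Your Euclidean/Sard argument would be a clean alternative \emph{if} the homeomorphic identification were available, but establishing it would require upgrading Theorem \ref{t1.1} to a homeomorphism statement, which the paper deliberately avoids (cf.\ Remark \ref{values}(b), where properly homotopy equivalent manifolds of this kind have distinct topological types).

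There is a second, independent gap in the ``disk-modification step.'' Filling in the bounded complementary regions of $\widetilde{K}_j \cap \Pi_m$ in $\Pi_m \cong \R^2$ produces a disjoint union of closed disks, not a single one: simple connectivity of the plane says nothing about the connectedness of $\widetilde{K}_j \cap \Pi_m$, which may well consist of several components (the plane can enter and leave the ball of radius $r_j$ repeatedly). If $K_j \cap \Pi_m$ is a union of $k \geq 2$ disks, then $\Pi_m \cap U_j$ is a plane with $k$ open disks removed, which has $k$ boundary circles and is not homeomorphic to $S^1 \times [m(j)-m, \infty)$, so condition (b) fails. The repair is essentially what the paper does: replace $\widetilde{K}_j \cap \Pi_m$ by a single large closed disk containing it (equivalently, shrink the induced neighborhood of infinity in $\Pi_m$ to the complement of one open disk), which is possible because $\Pi_m$ is one-ended; one must then check that the enlarged compacta still exhaust $M$. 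A similar, more minor, repair is needed for connectedness of $U_j$: one must absorb the bounded components of $M \setminus K_j$ into $K_j$ — merely taking $r_j$ large does not rule them out.
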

\begin{proof} The $U_j$'s are found as follows. We use \cite[Lemma 3.1]{Brown} to start with a system ${\mathcal W} = \{W_j\}_{j \geq 0}$ of
$\infty$-neighborhoods of $M$, with $W_0 = M$ and each $W_j$ being
a connected $3$-submanifold whose topological frontier $Fr(W_j)$ is a connected compact
surface, possibly with boundary. By using a regular neighborhood of the union of each $W_i$ with
the (finitely many) spheres hitting it and disjoint with the rest of the spheres, we replace
${\mathcal W}$ by a new system ${\mathcal W}' = \{W'_j\}_{j \geq 0}$ of $\infty$-neighborhoods of $M$,
already satisfying condition (a). Then, one observes that the intersections $\{W'_j\cap \Pi_m\}_{j\geq 0}$ form a system of $\infty$-neighborhoods of
the plane $\Pi_m$ and we can assume that each intersection $W'_j \cap \Pi_m$ distinct from $\Pi_m$ is contained in
an infinite cylinder $C'_{j,m}$ such that $\{C'_{j,m}\}_{j \geq 0}$ is a system of $\infty$-neighborhoods in $\Pi_m$.
This way, the polyhedra $W''_j = W'_j \cup_{m\geq 1} C'_{j,m}$ give a new system of
$\infty$-neighborhoods in $M$ and by using a regular neighborhood of each $W''_j$ avoiding the spheres outside it, we
can replace each $W''_j$ by a $3$-submanifold $U_j$ whose intersection with each plane $\Pi_m$ is either the whole $\Pi_m$ or a new infinite
cylinder $C_{j,m}$ with $Fr(U_j) \cap C_{j,m} = \partial C_{j,m}$. Finally, $\{U_j\}_{j \geq 0}$ satisfies (a) and (b).
\end{proof}
\begin{theorem} \label{t1bis} Let $G$ be a $1$-ended and semistable at infinity P3R group. Then, for any $3$-manifold $M \in \mathcal M_G$ associated to $G$ (and any choice of base ray) the tower $pro-\pi_1(M)$ is a telescopic tower generated by a set of $\mathfrak{P}(G)-1$ elements if $\mathfrak{P}(G)\neq 0$ and the trivial tower otherwise.
\end{theorem}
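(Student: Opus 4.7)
The plan is as follows. First, by Corollary \ref{t1}, the manifold $M \in \mathcal M_G$ has the proper homotopy type of the complement in $\R^3$ of a locally finite, pairwise disjoint family of open $3$-balls $\{B_k\}$ and open half-spaces $\{H_m\}$, with the latter in bijection with the planes of $\partial M$. Since $pro-\pi_1$ is a proper homotopy invariant (and the choice of base ray is immaterial by the semistability hypothesis), it suffices to compute the tower for such a model. I apply Lemma \ref{lemaentornos} to obtain a cofinal system of $\infty$-neighborhoods $\{U_j\}_{j \geq 0}$ with the stated frontier structure, pick a base ray $\omega$ passing through them, and study the induced tower $\pi_1(U_0) \lgaf \pi_1(U_1) \lgaf \pi_1(U_2) \lgaf \cdots$.

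The core step is to establish that $\pi_1(U_j)$ is free of rank $\max(m(j)-1,\, 0)$ for each $j$. Note that $U_j$ is an open $3$-submanifold of $\widehat M \cong \R^3$ (the contractible manifold obtained by capping off $\partial M$ in the proof of Theorem \ref{t1.1}), and by Lemma \ref{lemaentornos} the complement $\widehat M - U_j$ is compact, composed of the set $M - U_j$ together with the capping balls and half-spaces attached to the spheres $\Sigma_k \sub M - U_j$ and the planes $\Pi_1, \ldots, \Pi_{m(j)}$. Applying Alexander duality in $S^3 = \widehat M \cup \{\infty\}$, together with the observation that $Fr(U_j)$ is a compact planar surface with $m(j)$ boundary circles, I would compute $H_1(U_j) \cong \Z^{m(j)-1}$, generated by meridian loops $\gamma_1, \ldots, \gamma_{m(j)}$ around the circles $\Pi_m \cap Fr(U_j)$ modulo the single relation $\gamma_1 + \cdots + \gamma_{m(j)} = 0$ arising from the planarity of $Fr(U_j)$. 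To upgrade this to freeness of $\pi_1$, I would invoke the loop theorem / Dehn's lemma for the open $3$-manifold $U_j \sub \R^3$, yielding $\pi_1(U_j) \cong F_{m(j)-1}$ (interpreted as trivial when $m(j) \leq 1$).

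For the bonding maps, I consider the inclusion $U_{j+1} \sub U_j$. The meridians $\gamma_1, \ldots, \gamma_{m(j)-1}$ of $\pi_1(U_{j+1})$, indexed by the planes $\Pi_1, \ldots, \Pi_{m(j)-1}$ whose frontier circles persist in $U_j$, map to the corresponding generators of $\pi_1(U_j)$. However, each additional generator $\gamma_m$ of $\pi_1(U_{j+1})$ for $m(j) \leq m < m(j+1)$ corresponds to a plane $\Pi_m$ that, by Lemma \ref{lemaentornos}(b), lies entirely in $U_j - Fr(U_j)$: the meridian loop can then be slid off to infinity along $\Pi_m$ (where in $\widehat M$ the plane caps off by the half-space $H_m$), providing an explicit nullhomotopy inside $U_j$. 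Hence the bonding map is precisely the canonical projection $F(D_{j+1}) \to F(D_j)$ killing the new generators, where $\{D_j\}_{j \geq 0}$ is the increasing sequence of meridian sets. Passing to the limit, the tower is telescopic with total generating set of cardinality $\mathfrak{P}(G) - 1$ when $\mathfrak{P}(G) \neq 0$, and trivial otherwise.

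The main obstacle will be the freeness step: while the rank count via Alexander duality is fairly routine, upgrading from $H_1(U_j) \cong \Z^{m(j)-1}$ to $\pi_1(U_j) \cong F_{m(j)-1}$ requires some care. The cleanest route is likely a direct van Kampen computation, decomposing $U_j$ along a regular neighborhood of $Fr(U_j)$ into a compact ``core'' (with a product structure on its cylindrical ends $\Pi_m \cap U_j \cong S^1 \times [0,\infty)$) and a simply connected ``exterior'' (using the loop theorem in $\R^3$ to kill any residual $\pi_1$). The van Kampen theorem then delivers $\pi_1(U_j)$ as the free group with the expected single relation, completing the identification.
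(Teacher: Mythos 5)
There is a genuine gap in the core step. You compute the tower using the system $\{U_j\}$ produced by Lemma \ref{lemaentornos} and assert that $Fr(U_j)$ is a \emph{planar} compact surface and that $\pi_1(U_j)\cong F_{m(j)-1}$. Neither claim follows from that lemma. Lemma \ref{lemaentornos} is built on Brown--Tucker's exhaustion lemma, which only guarantees that each frontier is a connected compact surface with the prescribed boundary circles on the planes $\Pi_m$; it controls neither the genus of $Fr(U_j)$ nor the embedding of $M-U_j$ in $\widehat M\cong\R^3$. Already for $\mathfrak{P}(G)=0$ one may take $U_j$ to be the complement of a regular neighborhood of a knotted solid torus (a perfectly admissible $\infty$-neighborhood of $\R^3$), in which case $Fr(U_j)$ has genus $1$, $H_1(U_j)\cong\Z\neq 0$, and $\pi_1(U_j)$ is a knot group, hence not free. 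So your Alexander-duality rank count, your planarity assumption, and the loop-theorem/van Kampen upgrade to freeness all fail for the generic system; the loop theorem detects compressing disks but does not make the fundamental group of an open subset of $\R^3$ free (knot complements are the standard counterexample). The statement of the theorem concerns only the \emph{pro-isomorphism type}, which is insensitive to such local complexity, but your argument needs each $\pi_1(U_j)$ on the nose, and that is precisely what is not available. (A secondary slip: $\widehat M-U_j$ is not compact when $\partial M$ contains planes, since it contains the capping half-spaces; the compact complement lives in $\widetilde M$, the manifold with only the spheres capped.)

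The missing idea, which is how the paper proceeds, is to first pass from $M$ to the capped-off manifold $\widetilde M$ (using Lemma \ref{lemaentornos} only to see that filling the boundary spheres does not change $pro\text{-}\pi_1$), identify $(\widetilde M,\partial\widetilde M)\cong(B^3-Z,S^2-Z)$ via Theorem \ref{t1.1}, and then build an \emph{explicit} cofinal exhaustion $\{N_n\}$ by unions of a ball with cones over disks $D_{i,k}$ from the center of $B^3$. For that particular exhaustion the complements $\widetilde M-N_n$ are genuinely homotopy equivalent to $n$-punctured spheres, so freeness of rank $n-1$ and the fact that each bonding map kills exactly the newly created generator are read off directly, with no appeal to duality or the loop theorem. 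Your identification of the bonding maps (sliding the meridian of a plane contained in $U_j-Fr(U_j)$ off to infinity) is the right intuition and survives in the paper's model, but it must be run on an exhaustion whose terms you control; some such explicit cofinal subsystem has to be constructed before the telescopic structure can be exhibited. You should also note, as in Remark \ref{values}(a) and Theorem \ref{vsg}, that $\mathfrak{P}(G)=1$ cannot occur and that $\mathfrak{P}(G)=2$ gives the constant tower $\Z$, so the nontrivial construction is only needed for $\mathfrak{P}(G)=\infty$.
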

\begin{proof} Let $M \in \mathcal M_G$. First, observe that the towers $pro-\pi_1(M) \cong pro-\pi_1(\widetilde{M})$ are pro-isomorphic, where $\widetilde{M}$ is the $3$-manifold obtained from $M$ by attaching copies of the $3$-ball as in Theorem \ref{t1.1} and so that $\partial \widetilde{M}$ contains no spheres. Indeed, by using a system $\{U_j\}_{j \geq 0}$ of $\infty$-neighborhoods for $M$ as in Lemma \ref{lemaentornos} and the
corresponding system $\{\widetilde{U}_j\}_{j \geq 0}$ of $\infty$-neighborhoods for $\widetilde{M}$, one can readily check that the inclusion $i : M \subseteq \widetilde{M}$ induces isomorphisms $i_* : \pi_1(U_j) \lga \pi_1(\widetilde{U}_j), j \geq 0$.\\
On the other hand, the pair $(\widetilde{M}, \partial \widetilde{M})$ is homeomorphic to $(B^3-Z, S^2-Z)$, with $\displaystyle S^2-Z = \cup_i D_i$ being a pairwise disjoint family of open disks $D_i$ of cardinality $\mathfrak{P}(G)$.\\
\indent If $\mathfrak{P}(G) = 0$ then $\widetilde{M}\cong {\R}^3$ is simply connected at infinity, and the case $\mathfrak{P}(G) = 2$ follows easily from Theorem \ref{vsg}, since $pro-\pi_1(M)$ is then pro-isomorphic to the constant tower $1 \leftarrow {\Z} \stackrel{id}{\longleftarrow} {\Z} \stackrel{id}{\longleftarrow} \cdots $. Otherwise (i.e., $\mathfrak{P}(G) = \infty$ by Remark \ref{values}(a)), we fix a homeomorphism $h_i: int \ B^2 \lga D_i$ for each $i$, and denote by  $D_{i,k}$  the image by $h_i$  of the ball $B^2_k\subset B^2$ of radius $k/k+1$  for $k\geq 1$.  Similarly, let  $B^3_k\subset B^3$  be  the  $3$-ball of the same radius.
\par
Let  $C_{i,k}$  denote the cone over  $D_{i,k}$  with  vertex  the center of $B^3$.  By identifying  the compact sets  $\displaystyle N_n = B^3_n \cup (\bigcup_{s= 1}^{n} \{C_{s,t}; s+t = n+1\}) \subset B^3-Z$ with their  images by the homeomorphism $(\widetilde{M}, \partial \widetilde{M}) \cong  (B^3-Z, S^2-Z)$,  the following facts can be readily checked  :
\begin{enumerate}[(i)]
\item $\displaystyle \widetilde{M} = \bigcup_{n \geq 1} N_n$ and $\{\widetilde{M} - N_n\}_{n \geq 1}$ is a system of $\infty$-neighborhoods for $\widetilde{M}$.
\item $\widetilde{M} - N_n$ is homotopy equivalent to a $2$-sphere with $n$ holes,  whence $\pi_1(\widetilde{M} - N_n)$ is a finitely generated free group of rank $n-1$.
\item  Via the previous homotopy  equivalence,  the  homomorphism  between fundamental  groups induced  by the inclusion  $\widetilde{M} - N_{n+1} \sub \widetilde{M} - N_n$  anihilates the generator arising when removing  $C_{n+1,1}$  from  $\widetilde{M} - N_n$.
\end{enumerate}
This way we have proved that
\[
pro-\pi_1(M) \cong pro-\pi_1(\widetilde{M}) \equiv \{1 \longleftarrow \pi_1(\widetilde{M} - N_1) \longleftarrow \pi_1(\widetilde{M} - N_2) \longleftarrow \cdots\}
\]
is a telescopic tower as claimed in the theorem.
\end{proof}

We now finish this subsection with the proof of Theorem \ref{p3r} as a consequence of all of the above.
\begin{proof}[Proof of Theorem \ref{p3r}] Let $G$ be a $1$-ended finitely presented P3R group which is semistable at
infinity, and let $X$ be a finite $2$-dimensional CW-complex with $\pi_1(X) \cong G$ an whose universal cover is proper
homotopy equivalent to a $3$-manifold $M$. By Theorem \ref{t1bis}, we have that $G$ is indeed of telescopic type at
infinity. Furthermore, its fundamental pro-group at infinity is pro-isomorphic to a telescopic tower $1 \leftarrow P_1
\lgaf P_2 \lgaf \cdots$ of one of the following three types:
\begin{enumerate}
\item[(i)] $P_i = \{1\}$, for all $i \geq 1$ (if $\mathfrak{P}(G)=0$).
\item[(ii)] $P_i={\Z}$, for all $i \geq 1$ (if $\mathfrak{P}(G)=2$).
\item[(iii)] $P_i=F(D_i)$ with $D_i \subsetneq D_{i+1}$ and $D_{i+1} - D_i$ finite, for all $i \geq 1$ (if
$\mathfrak{P}(G)= \infty$).
\end{enumerate}
We recall that two towers (inverse sequences) of groups $G_0 \stackrel{\lambda_1}{\lgaf} G_1 \stackrel{\lambda_2}{\lgaf}
G_2 \lgaf \cdots$ and $H_0 \stackrel{\mu_1}{\lgaf} H_1 \stackrel{\mu_2}{\lgaf} H_2 \lgaf \cdots$ are pro-isomorphic if
after passing to subsequences there exists a commutative diagram:
\[
\xymatrix{
{G_{i_0}} & {G_{i_1}} \ar[l] \ar[dl] & {G_{i_2}} \ar[l] \ar[dl] & \cdots \ar[l] \ar[dl]\\
{H_{j_0}} \ar[u] & {H_{j_1}} \ar[l] \ar[u] & {H_{j_2}} \ar[l] \ar[u] & \cdots \ar[l]\\
}
\]
where the horizontal arrows are the obvious compositions of the corresponding $\lambda$'s or $\mu$'s.\\
\indent Thus, one can easily check that all telescopic towers as in $(iii)$ are pro-isomorphic to each other. Moreover,
it is not hard to check that the telescopic towers of types $(i), (ii)$ and $(iii)$ above can be realized as the
fundamental pro-group of the following $1$-ended finitely presented groups, respectively:
\begin{enumerate}
\item[(i)] The direct product $G={\Z} \times {Z} \times {\Z}$, with the finite $2$-dimensional CW-complex $X$ being the
$2$-skeleton of $S^1 \times S^1 \times S^1$.
\item[(ii)] The direct product $G={\Z} \times {\Z}$, with the finite $2$-dimensional CW-complex $X=S^1 \times S^1$.
\item[(iii)] The direct product ${\F}_2 \times {\Z}$, with the finite $2$-dimensional CW-complex $X=(S^1 \vee S^1) \times S^1$.
\end{enumerate}
The second part of Theorem \ref{p3r} follows then from Proposition \ref{characterisation} since the (pro-isomorphism
type of) the fundamental pro-group of $G$ completely determines its proper $2$-equivalence class.
\end{proof}
\begin{remark} \label{non3mfld} Notice that each of the three proper $2$-equivalence classes above contains non $3$-manifold groups. For this, we may consider the following $1$-ended and semistable at infinity finitely presented groups. First, let $G$ be right angled Artin group associated to the flag complex given by a $3$-simplex (with its $1$-skeleton as defining graph). Then, by \cite[Prop. 5.7(iii)]{HermillerMeier} $G$ is a non $3$-manifold group which is simply connected at infinity, by \cite[Cor. 5.2]{BradyMeier}, and hence proper $2$-equivalent to ${\Z} \times {\Z} \times {\Z}$. Second, the finitely generated abelian group $H = {\Z} \times {\Z} \times {\Z}_2$ is a non $3$-manifold group (see \cite[Thm. 9.13]{Hempel}) which has ${\Z} \times {\Z}$ as a subgroup of finite index and hence it is proper $2$-equivalent to ${\Z} \times {\Z}$. And finally, let $K$ be the Baumslag-Solitar group $\langle a, t; t^{-1}at=a^2 \rangle$ which is a non $3$-manifold group (see \cite{Jaco}) and has a fundamental pro-group of telescopic type as in (iii) above (see \cite{L4} for details) and hence $K$ is proper $2$-equivalent to ${\F}_2 \times {\Z}$.
\end{remark}
\begin{remark} \label{GG}
\begin{enumerate}[(a)]
\item Given a $1$-ended finitely presented group $G$ containing an element of infinite order, we may interpret Theorems \ref{vsg} and \ref{t1bis} above as a converse to \cite[Thm. 1.4]{GeoG} in the following fashion:
	\begin{enumerate}[(i)]
		 \item The group $G$ has pro-monomorphic fundamental group at infinity (actually pro-stable) if and only if $G$ is a semistable $P3R$ group with $\mathfrak{P}(G)=0$ or $\mathfrak{P}(G)=2$. As pointed out in \cite[Remark 6]{GeoG} or \cite[Prop. 2.8]{GeoG}, we may talk of the fundamental group at infinity of a group $G$ since the conclusion is that the group $G$ is semistable.\\
\indent Also, Theorems \ref{t1bis} and \ref{alternative} and Proposition \ref{characterisation} yield the following:
\item The group $G$ has as fundamental group at infinity a strictly telescopic tower (meaning pro-epimorphic not pro-stable) if and only if $G$ is a semistable $P3R$ group with $\mathfrak{P}(G)=\infty$.
\end{enumerate}
\item Notice that in both cases the $2$-skeleton of the corresponding universal cover associated to the group $G$ is a proper co-H-space, see \cite[Cor. 6.4]{CLMQ}.
\end{enumerate}	
\end{remark}

\section{A counterexample to the converse of Theorem \ref{graph}}
\label{counterexample}

The purpose of this section is to show a counterexample to the converse of Theorem \ref{graph} in contrast to the situation under the quasi-isometry relation, see
\cite[Thm. 0.4]{PaWhy}).\\
\indent Let us consider the finitely presented groups $G= {\Z}_2$ and $H = {\Z} \times {\Z} \times {\Z}$ and let $Z= {\R}P^2$ and $Y$ be the $2$-skeleton of $S^1 \times S^1 \times S^1$, with $\pi_1(Z) \cong {\Z}_2$ and $\pi_1(Y) \cong H$. We now proceed to show that the infinite ended groups $G*G*G$ and $H*H$ are proper $2$-equivalent, but clearly $G$ and $H$ are not proper $2$-equivalent groups as $G$ is finite and $H$ is $1$-ended. For this, consider the $2$-dimensional CW-complexes $Y \vee Y$ and $X = Z \vee Z \vee Z$. Clearly, $\pi_1(X) \cong G*G*G$ and $\pi_1(Y \vee Y) \cong H*H$.\\
\indent The universal cover $\widetilde{X}$ can be obtained by doubling the edges (not the vertices) of the Cantor tree of degree $3$ at each vertex, and then identifying each double edge with the equator of a copy of the $2$-sphere $S^2$. One can readily check that this space is proper homotopy equivalent to the $2$-spherical object $\Gamma$ obtained by attaching a copy of $S^2$ at each vertex of the Cantor tree above. On the other hand, the universal cover $\widetilde{Y}$ can be regarded as the $2$-dimensional complex in ${\R}^3$ given by the union of the boundaries of the unit cubes whose edges are parallel to the coordinate axes and whose vertex set is ${\Z} \times {\Z} \times {\Z} \sub {\R}^3$, and hence as a (strong) proper deformation retract of ${\R}^3$ minus a countable collection of $3$-balls, one for each of these unit cubes. Thus, as observed in Remark \ref{r0}(b), $\widetilde{Y}$ has the proper homotopy type of the $2$-spherical object $\Sigma$ under ${\R}_+$ obtained by attaching a copy of $S^2$ at each non-negative integer. Moreover, by Lemma \ref{Antonio}, the proper homotopy equivalence $f : \widetilde{Y} \lga \Sigma$ can be assumed to restrict to a bijection between the $0$-skeleta $f_0 : (\widetilde{Y})^0 \lga \Sigma^0$.\\
\indent Recall from the proof of Proposition \ref{free} that the universal cover $\widetilde{Y \vee Y}$ is constructed by a tree-like arrangement of copies of $\widetilde{Y}$ as the pushout of a diagram

\[
\bigsqcup_{p \in {\N}} \widetilde{Y}_p \stackrel{i}{\longleftarrow}
{\N} \times {\N} \stackrel{\alpha}{\longrightarrow}  {\N} \times {\N} \stackrel{j}{\longrightarrow}
\bigsqcup_{r \in {\N}} \widetilde{Y}_r
\]
where $\alpha$ is a bijection determining how the copies $\widetilde{Y_p}$ are attached to the copies $\widetilde{Y_r}$ to get $\widetilde{Y \vee Y}$. Then, we make a replica of the previous pushout diagram via the bijection $f_0$ above and get a commutative diagram

\[
\xymatrix{
\ar[d]^{\bigsqcup_p f_p} \displaystyle \bigsqcup_{p \in {\N}} \widetilde{Y}_p &
\ar[l]_{i} {\N} \times {\N} \ar[d]^{(id, f_0)} \ar[r]^{\alpha} & {\N} \times {\N} \ar[d]^{(id, f_0)} \ar[r]^{j} & \displaystyle
\bigsqcup_{r \in {\N}} \widetilde{Y}_r \ar[d]^{\bigsqcup_r f_r}\\
\displaystyle \bigsqcup_{p \in {\N}} \Sigma_p  & \ar[l]_{i'} {\N} \times {\N} \ar[r]^{\alpha'} & {\N} \times {\N}
\ar[r]^{j'} & \displaystyle \bigsqcup_{ r \in {\N}} \Sigma_r
}
\]
where the $f_p$'s and $f_r$'s are copies of the map $f$, $i'$ is given by $i'(a,b) = f \circ i \circ (id, f_0)^{-1}(a,b)$, similarly $j'$, and $\alpha' = (id,f_0) \circ \alpha \circ (id,f_0)^{-1}$. As in the proof of Proposition \ref{free}, the gluing lemma \cite[Lemma I.4.9]{BQ} yields a proper homotopy equivalence between $\widetilde{Y \vee Y}$, as the pushout of the upper row, and the pushout of the lower row, say $\Delta$, which is a $2$-spherical object under a Cantor tree with two copies of the $2$-sphere $S^2$ attached at each vertex. Thus, by the classification of spherical objects in \cite[Prop. II.4.5]{BQ}, $\Delta$ has the proper homotopy type of the spherical object $\Gamma$ above, as they are both $2$-spherical objects under Cantor trees with $2$-spheres attached along each end, and hence the groups $G*G*G$ and $H*H$ are proper $2$-equivalent.

\section*{Appendix}
\label{App}

The purpose of this appendix is to give an example of an infinite ended P3R group (which is semistable at each end) so that the set of $3$-manifolds associated to it contains infinitely many different topological types in contrast to the $1$-ended case, see Remark \ref{values} (b) above.\\
\indent Let us consider the finitely presented group $G={\Z} \times {\Z}$, and let $X=S^1 \times S^1$ denote the standard $2$-dimensional CW-complexes associated to the obvious presentation. It is clear that the universal cover $\widetilde{X}={\R}^2$ is contractible and thickens to the (p.l.) $3$-manifold $M={\R}^2 \times [-1,1]$ so that the
inclusion $\widetilde{X} \hookrightarrow M$ is a proper homotopy equivalence. Observe that
$\partial M$ consists of two planes, i.e., $\mathfrak{P}(G)=2$ (see $\S \ref{App1}$). Consider the free product $G*G$ which is again P3R by \cite[Lemma 3.2]{ACLQ} and the $2$-dimensional CW-complex $P$ obtained from $X \sqcup X \sqcup I$ by identifying $0,1 \in I$ with the corresponding base point in each copy of $X$. Is it clear that $\pi_1(P) \cong G*G$.  Following the same argument as in the proof of \cite[Lemma 3.2]{ACLQ}, we next proceed to give instructions to build a family of manifolds $\widehat{M}$ associated to $G*G$ as follows.\\

\indent Consider copies $\widetilde{X}_p$ and $\widetilde{X}'_r$ of $\widetilde{X}$, as well a filtration $\{C_m\}_{m \geq 1}$ of the $3$-manifold $M$ by compact subsets. Following the same argument as in the proof of \cite[Lemma 3.2]{ACLQ}, the universal
cover $\widetilde{P}$ of $P$ is proper homotopy equivalent to a (p.l) $3$-manifold
$\widehat{M}$ obtained as the corresponding quotient space constructed from the following data:\\
\indent (i) Disjoint unions $\displaystyle \bigsqcup_{p \in {\N}} M_p$ and $\displaystyle \bigsqcup_{r \in {\N}} M'_r$ of copies of $M$, so that $\widetilde{X}_p \sub M_p$ and $\widetilde{X}'_r \sub M'_r$.\\
\indent (ii) Copies $\{C^p_m\}_{m \geq 1}$ and $\{{C'}^r_m\}_{m \geq 1}$ of $\{C_m\}_{m \geq 1}$ as filtrations for each $M_p$ and $M'_r$ respectively.\\
\indent (iii) A bijection $\varphi : {\N} \times {\N} \lga {\N} \times {\N}$ (given by the corresponding group
action of $G*G$ on $\widetilde{P}$) so that each pair $(p,q), \varphi(p,q)$ determines vertices $x_{p,q} \in \widetilde{X}_p$ and $x'_{\varphi(p,q)} \in \widetilde{X}'_r$ joined by a single copy $I_{p,q}$ of $I$ inside $\widetilde{P}$ (where $r=\pi_1(\varphi(p,q)$)).\\
\indent (iv) Functions $\widehat{m}, \widehat{m}' : {\N} \times {\N} \lga {\N}$ so that $x_{p,q} \in \widetilde{X}_p - C^p_{\widehat{m}(p,q)}$ and $x'_{\varphi(p,q)} \in \widetilde{X}'_r - {C'}^r_{\widehat{m}'(\varphi(p,q))}$, together with proper cofibrations
\[
j : {\N} \times {\N} \lga \bigsqcup_{p \in {\N}} M_p \; , j(p,q) \in \partial M_p - C^p_{\widehat{m}(p,q)}
\]
\[
j' : {\N} \times {\N} \lga \bigsqcup_{r \in {\N}} M'_r \; , j'(p,q) \in \partial M'_r -
{C'}^r_{\widehat{m}'(\varphi(p,q))} \mbox{ with } r=\pi_1(\varphi(p,q))
\]
so that $j(p,q)$ and $j'(p,q)$ are joined by paths to $x_{p,q}$ and $x'_{\varphi(p,q)}$ respectively (inside the corresponding copy of $M$).\\
\indent (iv) A disjoint union $\displaystyle \bigsqcup_{p,q \in {\N}} I_{p,q}$ of copies of the unit interval $I$, so that $0 \in I_{p,q}$ is being identified with $j(p,q) \in \partial
M_p$ and $1 \in I_{p,q}$ is being identified with $j'(p,q) \in \partial M'_r$ (with  $r=\pi_1(\varphi(p,q))$).\\
\indent (v) Finally, the corresponding $3$-manifold $\widehat{M}$ is obtained by attaching
three-dimensional $1$-handles $H_{p,q}$ to this quotient space whose cores run along each $I_{p,q}$.\\
\indent Moreover, with some additional care, the points $j(p,q)$ and $j'(p,q)$ above can be chosen so as to avoid any given subset $\mathcal S$ of the set of all plane boundary components in the original $\displaystyle \bigsqcup_{p \in {\N}} M_p \sqcup \bigsqcup_{r \in {\N}} M'_r$ satisfying that $\mathcal S$ does not contain the two boundary components of the same copy of $M$. Notice that the homeomorphism type of the $3$-manifold $\widehat{M}$ obtained as prescribed above depends somehow on how these points are disposed in the whole construction, as they determine the homeomorphism type of $\partial \widehat{M}$, following (v) above. This way, we are
choosing a particular homeomorphism type for such an $\widehat{M}$. Indeed, all planes in $\mathcal S$ remain as boundary components of $\widehat{M}$; therefore, any two subsets $\mathcal S$ and $\mathcal S'$ as above with different cardinality yield non homeomorphic $3$-manifolds as the result of the construction.

\end{document}